\documentclass[reqno]{amsart}
\usepackage{hyperref}
\usepackage{color}
\usepackage{mathrsfs}
\usepackage{graphicx}
\usepackage{amssymb, amsmath, amsthm}
\usepackage{amsfonts}
\usepackage{amssymb}
\usepackage{float}
\usepackage{mathrsfs}
\usepackage{enumitem} 
\hypersetup{
    bookmarks=true,         
    pdffitwindow=false,     
    pdfstartview={FitH},    
    colorlinks=false,       
}

\floatplacement{figure}{H}
\newtheorem{theorem}{Theorem}

\newtheorem{definition}[theorem]{Definition}

\newtheorem{lemma}[theorem]{Lemma}

\newtheorem{proposition}[theorem]{Proposition}
\newtheorem{remark}[theorem]{Remark}

\newcommand{\dis}{\displaystyle}

\newcommand{\divv}{\text{\rm div}}

\newcommand{\cA}{\tilde{\mathcal{A}}}

\newcommand{\cH}{\tilde{\mathcal{H}}}

\newcommand{\R}{\mathbb R}

\newcommand{\intox}{\int_{\R^3}}
\newcommand{\intoxtt}{\int_0^t\!\!\!\int_{\R^3}}
\newcommand{\intoxt}{\int_0^T\!\!\!\int_{\R^3}}
\newcommand{\intoxtl}{\int_1^t\!\!\!\int_{\R^3}}
\newcommand{\intoxts}{\int_0^{t_2}\!\!\!\intox}

\numberwithin{equation}{section}
\numberwithin{theorem}{section}

\numberwithin{figure}{section}

\begin{document}

%
%
%
%
%
%
%
%
%




\title[Uniqueness of weak solutions]
 {Existence and uniqueness of low-energy weak solutions to the compressible 3D magnetohydrodynamics equations}
 
\author{Anthony Suen} 

\address{Department of Mathematics and Information Technology\\The Education University of Hong Kong, Hong Kong}

\email{acksuen@eduhk.hk}

\dedicatory{To my family and my daughter Elisa}

\date{June 30, 2019}

\keywords{compressible magnetohydrodynamics, global weak solutions, uniqueness, continuous dependence}

\subjclass[2000]{35Q35, 35Q80} 

\begin{abstract}
We prove the existence and uniqueness of weak solutions of the three dimensional compressible magnetohydrodynamics (MHD) equations. We first obtain the existence of weak solutions with small $L^2$-norm which  may display codimension-one discontinuities in density, pressure, magnetic field and velocity gradient. The weak solutions we consider here exhibit just enough regularity and structure which allow us to develop uniqueness and continuous dependence theory for the compressible MHD equations. Our results generalise and extend those for the intermediate weak solutions of compressible Navier-Stokes equations. 
\end{abstract}

\maketitle
\section{Introduction}

Magnetohydrodynamics (MHD) studies the dynamics of electrically conducting fluids under the influence of magnetic fields. There are many examples of conducting fluids, including plasmas, liquid metals, electrolytes, etc. The main idea of magnetohydrodynamics is that conducting fluids can support magnetic fields. More precisely, magnetic fields can induce currents in a moving conducting fluid, which in turn create forces on the fluid and also change the magnetic fields themselves. The subject of magnetohydrodynamics unites classical fluid dynamics with electrodynamics, and references can be found in \cite{Alfven42}, \cite{cabannes70}, \cite{Davidson01}, \cite{DucomentFeireisl06}.

In this present work, we focus on the following compressible barotropic model:
\begin{align}
 \rho_t + \divv (\rho u) &=0, \label{MHD1}\\
(\rho u^j)_t + \divv (\rho u^j u) + P(\rho)_{x_j} + ({\textstyle\frac{1}{2}}|B|^2)_{x_j}-\divv (B^j B)&= \mu \Delta u^j +
\lambda \, \divv \,u_{x_j},\label{MHD2}\\ 
B^{j}_{t} + \divv (B^j u - u^j B) &=\nu\Delta B^j,\label{MHD3}\\
\divv\,(B) &= 0,\label{MHD4}
\end{align}
 with given initial data
\begin{align}
(\rho,u,B)(x,0)&=(\rho_0,u_0,B_0)(x).\label{initial data}
\end{align}
For our barotropic model, the temperature is taken to be constant and the state of fluid motion is specified by three physical quantities: density $\rho$, velocity $u=(u^1,u^2,u^3)\in\R^3$ and magnetic field $B=(B^1,B^2,B^3)\in\R^3$. These quantities are all functions of the spatial coordinate $x\in\R^3$ and time $t\ge0$. $P=P(\rho)$ is the pressure which is an increasing function of $\rho$. $\mu$, $\lambda$ are positive viscosity coefficients and $\nu$ is the magnetic diffusivity. For a compressible barotropic flow, the equations of dynamics are given by the Navier-Stokes equations \eqref{MHD1}-\eqref{MHD2} which express the conservation of mass and conservation of momentum respectively. On the other hand, Maxwell's equations \eqref{MHD3}-\eqref{MHD4} govern the electromagnetic phenomena of the conducting fluid and the dynamics of magnetic fields. Therefore, by combining the compressible Navier-Stokes equations with Maxwell's equations, we obtain equations \eqref{MHD1}-\eqref{MHD4} which model the macroscopic behavior of electrically conducting fluids. We refer to Cabannes \cite{cabannes70}, Biskamp \cite{Biskamp97} and Freist\"{u}hler \cite{Freistuhler93} for more detailed discussions and derivation of the system \eqref{MHD1}-\eqref{MHD4}.

The global well-posedness of the system \eqref{MHD1}-\eqref{MHD4} is an active topic in mathematics, and the cases $\nu>0$ and $\nu=0$ are both of interest. When the magnetic diffusivity $\nu$ is taken to be positive, different types of solutions to \eqref{MHD1}-\eqref{MHD4} are proved to exist for all time:

\noindent{(a)} The first type of solutions to \eqref{MHD1}-\eqref{MHD4} are the {\it small-smooth solutions}. More precisely, Kawashima \cite{kawashima83} proved the global-in-time existence of $H^3$ solutions for \eqref{MHD1}-\eqref{MHD4} when the initial data was taken to be small in $H^3$ modulo a constant state. His analysis consists an iterative procedure based on asymptotic decay rates for the corresponding linearised equations. The major weakness of small-smooth solutions is that they do not exhibit nonlinear effects and tell us relatively little about the fluid flow. 

\noindent{(b)} The second type of solutions to \eqref{MHD1}-\eqref{MHD4} are the {\it large-energy weak solutions}. In this category, solutions are proved to exist for initial data with arbitrarily large energy and nonnegative density, which can be achieved by showing that sequences of approximate solutions with uniform energy estimates and entropy estimates have strongly converging subsequences. These results were obtained by Hu and Wang \cite{huwang08}-\cite{huwang10} and Sart \cite{sart09} which generalised the previous results proved by Lions \cite{lions98} and Feireisl \cite{feireisl02}-\cite{feireisl04} for compressible Navier-Stokes system. Large-energy weak solutions by their very nature possess very little regularity, which may even include some non-physical solutions (see \cite{hoff11} and \cite{hoffserre91} for related discussions). 

\noindent{(c)} Apart from those two types of solutions as mentioned in (a) and (b), Suen and Hoff \cite{suenhoff12} proved the global-in-time existence of {\it intermediate weak solutions} which was an extension of the intermediate regularity class of solutions for compressible Navier-Stokes system introduced by Hoff \cite{hoff95}-\cite{hoff06}. Such intermediate regularity class of solutions has rich physical and mathematical meanings compared to other solution classes. In this category, initial data is assumed to be small in some weak norms ($L^2$) with nonnegative and essentially bounded initial densities. From the results obtained by Hoff and Santos \cite{hoffsantos08} for the Navier-Stokes system, it can be seen that solutions may exhibit discontinuities in density and velocity gradient across hypersurfaces in $\R^2$ or $\R^3$, which is not observable from small-smooth solutions mentioned in (a). On the other hand, the solutions would still have enough regularity for the development of a uniqueness and continuous dependence theory \cite{hoff06} which seems unreachable within or from the very weak framework used by Lions and Feireisl mentioned in (b).

In light of (c) as described above, the main goal of the present work is therefore to address the global-in-time existence and uniqueness of intermediate weak solutions of the system \eqref{MHD1}-\eqref{MHD4}. The novelties of this current work are as follows:

\noindent{1.} We strengthen the results obtained in Suen-Hoff \cite{suenhoff12}, in the way that we show the details of the $s$-dependence of various smoothing rates near $t=0$ resulting from the hypothesis that $u_0,B_0\in H^s$ for $s\in(\frac{1}{2},1]$. Such regularity requirement on $u_0,B_0$ is crucial in obtaining uniqueness of the weak solutions of \eqref{MHD1}-\eqref{MHD4}. It also matches with the results given in Hoff \cite{hoff02} for Navier-Stokes equations.

\noindent{2.} We obtain new estimates on various auxiliary functionals which are important in controlling the strong coupling effects between density, velocity and magnetic fields. Those estimates will be used in proving both existence and uniqueness of weak solutions of \eqref{MHD1}-\eqref{MHD4}. 

\noindent{3.} We successfully extend the uniqueness and continuous dependence theory given in \cite{hoff06} for compressible Navier-Stokes system to compressible MHD system \eqref{MHD1}-\eqref{MHD4}.

\medskip

We give a brief exposition on the analysis applied in this work. First of all, we introduce an important canonical variable associated with the system \eqref{MHD1}-\eqref{MHD4}, which is known as the {\it effective viscous flux}. To see how it works, by the Helmholtz decomposition of the mechanical forces, we can rewrite the momentum equation \eqref{MHD2} as follows (summation over $k$ is understood):
\begin{equation}\label{derivation for F}
\rho\dot u^j +( {\textstyle\frac{1}{2}}|B|^2)_{x_j}-\divv(B^jB)=F_{x_j}+\mu\omega^{j,k}_{x_k},
\end{equation}
where $\dot u^j=u^j_t+u\cdot u^j$ is the material derivative on $u^j$ and the {\it effective viscous flux} $F$ is defined by
\begin{equation}\label{definition of F}
F=(\mu+\lambda){\rm div}(u) - P(\rho) + P(\tilde{\rho}).
\end{equation}
Differentiating \eqref{derivation for F}, we obtain the following Poisson equation
\begin{equation}\label{poisson in F}
\Delta F = \divv(g),
\end{equation}
where $g^j=\rho\dot{u}^j+(\frac{1}{2}|B|^2)_{x_j}-\divv(B^jB)$. This Poisson equation \eqref{poisson in F} is thus the analog for compressible MHD of the well-known elliptic equation for pressure in incompressible flow. The effective viscous flux $F$ plays a crucial roll in the overall analysis:

\noindent{1.} The equation \eqref{derivation for F} expresses the acceleration density $\rho \dot u$ as the sum of the gradient of the scalar $F$ and the divergence-free vector field $\omega^{\cdot,k}_{x_k}$, modulo lower-order terms involving $B$. The skew-symmetry of $\omega$ insures that these two vector fields are orthogonal in $L^2(\R^3)$, so that $L^2$-bounds for the terms on the left side of \eqref{derivation for F} immediately give $L^2$ bounds for the gradients of both $F$ and $\omega$. These in turn will be used for controlling $\nabla u$ in $L^4$ when $u(\cdot,t)\notin H^2$.

\noindent{2.} With the help of the effective viscous flux $F$ on the mass equation \eqref{MHD1}, we can further rewrite the equation as follows:
\begin{equation*}
(\mu+\lambda)\frac{d}{dt}[\log\rho(x(t),t)-\log(\tilde\rho)]+P(\rho(x(t),t))-\tilde P=-F(x(t),t),
\end{equation*}
where $x(t)$ is an integral curve of $u$ and $\tilde\rho$ is some constant density. Upon integrating the above equation with respect to $t$ on some interval $[t_1,t_2]$, if $P$ is increasing, then the integral of $P-\tilde P$ on the left side gives a dissipative term. Hence it suffices to control the term $\int_{ t_0}^{t_1}F(x(\tau),\tau)d\tau$. If $\Gamma$ is the fundamental solution for the Laplace operator on $\R^3$, then from \eqref{poisson in F} we have
\begin{equation*}
F=\Gamma_{x_j}*\Big(\rho\dot{u}^j+(\frac{1}{2}|B|^2)_{x_j}-\divv(B^jB)\Big).
\end{equation*}
There is a cancellation between the material derivative on $u$ and the time integral in $\int_{ t_0}^{t_1}F(s)ds$, as a result we can obtain integrals in lower regularity and hence greater integrability in time. Such observation is essential in proving the pointwise bounds on the density, which allows us to obtain sufficient {\it a priori} bounds on the solutions.

\noindent{3.} One of the key step in proving uniqueness of weak solutions is to obtain a bound on $\int_0^t\|\nabla u(\cdot,\tau)\|_{L^\infty}d\tau$. Our attempt is to decompose $u$ as $u=u_{F}+u_{P}$, where $u_{F}$, $u_{P}$ satisfy
\begin{align*}
\left\{
 \begin{array}{lr}
(\mu+\lambda)\Delta u_{F}^{j}=F_{x_j} +(\mu+\lambda)\omega^{j,k}_{x_k}\\
(\mu+\lambda)\Delta u_P^{j}=(P-P(\tilde{\rho}))_{x_j}.\\
\end{array}
\right.
\end{align*}
Using the {\it a priori} bounds on the effective viscous flux $F$, we can bound the integral $\int_0^t\|\nabla u_F(\cdot,\tau)\|_{L^\infty}d\tau$ in terms of $F$. On the other hand, to bound the integral $\int_{0}^{t}||\nabla u_{P}(\cdot,\tau)||_{\infty}d\tau$, we point out that $(\rho-\tilde\rho)\in L^2\cap L^\infty$ is {\it not} sufficient for bounding $||\nabla u_{P}(\cdot,\tau)||_{\infty}$. However, if $P(\rho(\cdot,t))\in L^\infty$, then $u^j_P(\cdot,t)=(\mu+\lambda)^{-1}\Gamma_{x_j}*(P(\rho(\cdot,t))-\tilde P)$ is log-Lipschitz. This is sufficient to guarantee that the integral curve $x(\cdot,t)$ of $u=u_F+u_P$ (assuming that $u_F$ has enough regularity as claimed) is H\"{o}lder continuous. If we assume that the initial density is {\it piecewise} H\"{o}lder continuous, then using the mass equation \eqref{MHD1}, it implies that the density is also {\it piecewise} H\"{o}lder continuous for positive time. Hence with such improved regularity on the density, it allows us to obtain the desired bound on $\int_{0}^{t}||\nabla u_{P}(\cdot,\tau)||_{\infty}d\tau$.

\medskip

We now give a detailed formulation of our results. To begin with, we require that the viscosity constants $\mu$, $\lambda$, $\nu$ and pressure $P(\rho)$ satisfy 
\begin{equation}\label{condition on vis}
\mu,\lambda,\nu>0,\qquad\frac{\mu}{\lambda}>4,
\end{equation}
and
\begin{equation}\label{condition on pressure}
P'(\rho)>0,\;\rho>0.
\end{equation}

The weak solutions to \eqref{MHD1}-\eqref{MHD4} are defined as follows. 
\begin{definition}\label{definition of weak solution}
We let $\tilde\rho$ be a fixed, positive, constant reference density and we take $\tilde P=P(\tilde\rho)$. The weak solutions we study in this paper are defined as follows. A weak solution of the system \eqref{MHD1}-\eqref{MHD4} is a triple $(\rho,u,B)$ which satisfies 
\begin{itemize}
\item $(\rho-\tilde{\rho},\,\rho u, B)\in C([0,\infty);H^{-1}(\R^3))$ with $(\rho,u,B)|_{t=0}=(\rho_0,u_0,B_0)$;
\item $\nabla u,\nabla B\in L^2(\R^3\times(0,\infty))$;
\item $\divv (B)(\cdot,t)=0$ in ${\mathcal D}'(\R^3)$ for $t>0$;
\end{itemize}
and the following identities hold for times $t_2\ge t_1 \ge 0$ and $C^1$ test functions $\varphi$ having uniformly bounded support in $x$ for $t\in[t_1,t_2]$:
\begin{align}\label{weak sol 1}
\left.\int_{\R^3}\rho(x,\cdot)\varphi(x,\cdot)dx\right|_{t_1}^{t_2}=\int_{t_1}^{t_2}\!\!\!\int_{\R^3}(\rho\varphi_t + \rho u\cdot\nabla\varphi)dxd\tau,
\end{align}
\begin{align}\label{weak sol 2}
\left.\int_{\R^3}(\rho u^{j})(x,\cdot)\varphi(x,\cdot)dx\right|_{t_1}^{t_2}=\int_{t_1}^{t_2}&\int_{\R^3}[\rho u^{j}\varphi_t + \rho u^{j}u\cdot\nabla\varphi + P(\rho)\varphi_{x_j}]dxd\tau\notag\\
&+ \int_{t_1}^{t_2}\!\!\!\int_{\R^3}\left[{\textstyle\frac{1}{2}}|B|^2\varphi_{x_j} - B^{j}B\cdot\nabla\varphi\right]dxd\tau\\& - \int_{t_1}^{t_2}\!\!\!\int_{\R^3}[(\mu+\lambda)\nabla u^{j}\cdot\nabla\varphi + \lambda(\divv(u))\varphi_{x_j}]dxd\tau,\notag
\end{align}
and
\begin{equation}\label{weak sol 3}
\left.\int_{\R^3}B^{j}(x,\cdot)\varphi(x,\cdot)dx\right|_{t_1}^{t_2}=\int_{t_1}^{t_2}\!\!\!\int_{\R^3}[(B^{j}u - u^{j}B)\cdot\nabla\varphi - \nu\nabla B^{j}\cdot\nabla\varphi]dxd\tau.
\end{equation}
\end{definition}
We adopt the usual notation for H\"older seminorms, namely for $v:\R^3\to \R^3$ and $\alpha \in (0,1]$, 
\hfill
$$\langle v\rangle^\alpha = \sup_{{x_1,x_2\in 
\R^3}\atop{x_1\not=x_2}}
{{|v(x_2) -v(x_1)|}\over{|x_2-x_1|^\alpha}}\,;$$
and for $v:Q\subseteq\R^3 \times[0,\infty)\to \R^3$ and $\alpha_1,\alpha_2 \in (0,1]$,
\hfill
$$\langle v\rangle^{\alpha_1,\alpha_2}_{Q} = \sup_{{(x_1,t_1),(x_2,t_2)\in 
Q}\atop{(x_1,t_1)\not=(x_2,t_2)}}
{{|v(x_2,t_2) - v(x_1,t_1)|}\over{|x_2-x_1|^{\alpha_1} + |t_2-t_1|^{\alpha_2}}}\,.$$
We give the definition of piecewise H\"{o}lder continuous as follows. We also refer to Hoff \cite{hoff02} for more details.
\begin{definition}\label{definition of piecewise C beta}
We say that a function $\phi(\cdot,t)$ is piecewise $C^{\beta(t)}$ if it has simple discontinuities across a $C^{\beta(t)+1}$ curve $\mathcal{C}(t):\mathcal{C}(t)=\{y(s,t):s\in I\subset\R\}$, where $\beta(t)>0$ is a function in $t$, $I$ is an open interval and the curve $\mathcal{C}(t)$ is the $u$-transport of $\mathcal C(0)$ given by:
$$y(s,t)=y(s,0)+\int_0^t u(y(s,\tau),\tau)d\tau.$$
Here $\mathcal{C}(0)$ is a $C^{\beta_0+1}$ curve with $\beta(0)=\beta_0>0$, which means that
\begin{equation*}
\mathcal{C}(0)=\{y_0(s):s\in\R\},
\end{equation*}
where $y(s,0)=y_0(s)$ is parameterised in arc length $s$ and $y_0$ is $C^{\beta_0+1}$. The complement
of $\mathcal{C}(0)$ consists of two disjoint, connected, open sets $\Omega_{+}(0)$ and $\Omega_{-}(0)$
with $\mathcal{C}(0)=\partial\Omega_{\pm}(0)$. 

We denote the norm $\|\phi(\cdot,t)\|_{C^{\beta(t)}_{pw}}$ by
\begin{equation*}
\|\phi(\cdot,t)\|_{C^{\beta(t)}_{pw}}=\|\phi(\cdot,t)\|_{L^\infty}+\sup_{x_1\neq x_2}\frac{|\phi(x_2,t)-\phi(x_1,t)|}{|x_2-x_1|^{\beta(t)}},
\end{equation*}
where the supremum is taken over points $x_1$, $x_2$ on the same side of $\mathcal{C}(t)$.
\end{definition}

We also make use of the following standard facts (see Ziemer \cite[Theorem~2.1.4, Remark~2.4.3, and Theorem~2.4.4]{ziemer89}, for example):
\begin{itemize}
\item First, given $r\in[2,6]$ there is a constant $C(r)$ such that for $w\in H^1 (\R^3)$,
\begin{equation}\label{Lr bound general}
\|w\|_{L^r(\R^3)} \le C(r) \left(\|w\|_{L^2(\R^3)}^{(6-r)/2r}\|\nabla w\|_{L^2(\R^3)}^{(3r-6)/2r}\right)
\end{equation}
and
\begin{equation}\label{holder bound general}
\langle w\rangle^\alpha\le C(r)\|\nabla w\|_{L^r(\R^3)},
\end{equation}
where $\alpha=1-3/r$;
\item for any $r\in (3,\infty)$ there is a constant $C(r)$ such that for $w\in W^{1,r}(\R^3)$,
\begin{equation}\label{L infty bound general}
\|w\|_{L^\infty (\R^3)} \le C(r) \|w\|_{W^{1,r}(\R^3)}.
\end{equation}
\end{itemize}

\medskip

We now state our main results. Theorem~\ref{Existence theorem} gives the existence of weak solutions to \eqref{MHD1}-\eqref{MHD4} with the $s$-dependence of smooth rates near $t=0$ (see \eqref{bound on weak solution} below). When the initial density $\rho_0$ is piecewise $C^{\beta_0}$ for some $\beta_0>0$ as defined in Definition~\ref{definition of piecewise C beta}, we prove that $\rho(\cdot,t)$ is piecewise $C^{\beta(t)}$ for $\beta(t)\in(0,\beta_0]$, which provides sufficient regularity in obtaining the bound on the time integral of $\|\nabla u(\cdot,t)\|_{L^\infty}$.

\begin{theorem}\label{Existence theorem} 
Fix constants $L,\rho_1,\rho_2,\tilde\rho>0$, $q>6$ and $s\in(\frac{1}{2},1]$ and assume that $\mu$, $\lambda$, $\nu$, $P$ satisfy \eqref{condition on vis}-\eqref{condition on pressure}. There exists positive constants $d$, $\theta$, $C$ such that if the initial data $(\rho_0,u_0,B_0)$ is given satisfying 
\begin{equation}\label{bound on initial density}
\rho_1\le\rho_0(x)\le \rho_2,\;x\in\R^3,
\end{equation}
\begin{equation}\label{boundedness on Lq initial data}
\|u_0\|_{L^q}+\|B_0\|_{L^q}\le L
\end{equation}
\begin{align}\label{definition of C0}
C_0&=\|u_0\|^2_{H^s}+\|B_0\|^2_{H^s}+\intox(|\rho-\tilde\rho|^2+|u_0|^2+|B_0|^2)dx\le d,
\end{align}
then the system \eqref{MHD1}-\eqref{MHD4} has a global weak solution $(\rho,u,B)$  in the sense of \eqref{weak sol 1}-\eqref{weak sol 3} on all of $\R^3\times[0,\infty)$. The solution satisfies the following:
\begin{equation}\label{rho in H-1}
\rho-\tilde\rho\in C([0,\infty);H^{-1}(\R^3));
\end{equation}
\begin{equation}\label{u B in L2}
u,B\in C([0,\infty);L^2(\R^3));
\end{equation}
\begin{equation}\label{nabla u B in L2}
\nabla u,\nabla B\in L^2(\R^3\times(0,\infty));
\end{equation}
\begin{equation}\label{u B in H1}
u(\cdot,t),B(\cdot,t)\in H^1 (\R^3),\;t>0;
\end{equation}
\begin{equation}\label{holder bound on u B}
\langle u\rangle^{\frac{1}{2},\frac{1}{4}}_{\R^3 \times [\tau,\infty)},\langle B\rangle^{\frac{1}{2},\frac{1}{4}}_{\R^3 \times [\tau,\infty)} \leq C(t)C_{0}^{\theta},\;t>0,
\end{equation}
where $C(\tau)$ may depend additionally on a positive lower bound for $\tau$, and the following bounds hold:
\begin{equation}\label{pointwise bound on rho theorem}
\frac{1}{2}\rho_1\le\rho(x,t)\le 2\rho_2;
\end{equation}
\begin{align}\label{bound on weak solution}
&\sup_{t>0}\intox\Big(|\rho-\tilde\rho|^2+|u|^2+|B|^2+\sigma^{1-s}(|\nabla u|^2+|\nabla B|^2)+\sigma^{2-s}(|\dot{u}|^2+|B_t|^2)\Big)dx\notag\\
&+\int_0^\infty\!\!\!\intox\Big(|\nabla u|^2+|\nabla B|^2+\sigma^{1-s}(|\dot{u}|^2+|B_t|^2)+\sigma^{2-s}(|\nabla\dot{u}|^2+|\nabla B_t|^2)\Big)dxd\tau\notag\\
&\le CC_0^{\theta},
\end{align}
where $\dot{u}=u_t + \nabla u\cdot u$ is the material derivative of $u$ and $\sigma=\min\{1,t\}$. 

Furthermore, for $\beta_0>0$, given a $C^{\beta_0+1}$ curve $y_0$ as described in Definition~\ref{definition of piecewise C beta}, if there exists $N>0$ such that 
\begin{equation}\label{piecewise holder for initial rho}
\|\rho_0(\cdot)-\tilde\rho\|_{C^{\beta_0}_{pw}}\le N,
\end{equation}
then for each $T>0$ and $t\in[0,T]$, there are $\beta(t)\in(0,\beta_0]$ and $C(N,T,C_0)>0$ such that we have
\begin{equation}\label{bound on time integral on nabla u}
\sup_{0\le \tau\le T}\|\rho(\cdot,t)-\tilde\rho\|_{C^{\beta(t)}_{pw}}+\int_0^T\|\nabla u(\cdot,\tau)\|_{L^\infty}d\tau\le C(N,T,C_0).
\end{equation}
\end{theorem}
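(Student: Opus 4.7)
\medskip
\noindent\textbf{Proof proposal.} The plan is to construct the solution as a limit of smooth approximations. First I would mollify the initial data so that $(\rho_0^\varepsilon,u_0^\varepsilon,B_0^\varepsilon)\in H^3$ while preserving the bounds \eqref{bound on initial density}--\eqref{definition of C0} uniformly in $\varepsilon$, and invoke Kawashima's local-in-time $H^3$ existence theorem for \eqref{MHD1}--\eqref{MHD4} to obtain smooth approximations $(\rho^\varepsilon,u^\varepsilon,B^\varepsilon)$ on a maximal interval $[0,T^\varepsilon)$. The entire argument then reduces to establishing the \emph{a priori} bounds \eqref{pointwise bound on rho theorem}--\eqref{bound on weak solution} uniformly on $[0,T^\varepsilon)$, because these bounds will force $T^\varepsilon=\infty$ and furnish enough compactness to pass to the limit in the weak formulations \eqref{weak sol 1}--\eqref{weak sol 3}.

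The core \emph{a priori} machinery proceeds in a bootstrap of time-weighted energy functionals with weight $\sigma^{1-s}$ and $\sigma^{2-s}$ as indicated in \eqref{bound on weak solution}. First, a basic energy identity, obtained by testing \eqref{MHD2} against $u$ and \eqref{MHD3} against $B$ and combining with the mass equation, controls $\|(\rho-\tilde\rho,u,B)\|_{L^2}^2$ and $\int\!\!\int(|\nabla u|^2+|\nabla B|^2)$ in terms of $C_0$. Next, testing \eqref{derivation for F} against $\dot u$ and the induction equation against $B_t$, one picks up controls on $\int\sigma^{1-s}(|\dot u|^2+|B_t|^2)$; here one must exploit the orthogonality in $L^2$ of $\nabla F$ and $\nabla\times\omega$ coming from \eqref{derivation for F}, together with the Poisson equation \eqref{poisson in F}, to convert $L^2$ bounds on $\rho\dot u$ and the magnetic terms into bounds on $\nabla F$ and $\nabla\omega$. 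The $L^4$ control of $\nabla u$ then follows via \eqref{Lr bound general}, which is crucial for handling the strong nonlinear coupling in the $B$-terms. Differentiating \eqref{MHD2} and \eqref{MHD3} in time and testing against $\dot u$, $B_t$ gives the weighted bounds on $\int\sigma^{2-s}(|\nabla\dot u|^2+|\nabla B_t|^2)$. The tracking of the exponent $1-s$ (resp.\ $2-s$) comes from the fact that the $H^s$-norm of the initial data controls an intermediate quantity; here one interpolates between the energy ($s=0$) and $H^1$ ($s=1$) level estimates, as in Hoff's scheme for Navier--Stokes, paying close attention to the new magnetic contributions. With these in hand the pointwise density bound \eqref{pointwise bound on rho theorem} is obtained by integrating the transport-type identity for $\log\rho(x(t),t)$ derived from \eqref{MHD1} and \eqref{definition of F} along integral curves, using the cancellation mechanism in $\int_{t_0}^{t_1}F(x(\tau),\tau)\,d\tau$ to absorb the material derivative and magnetic terms into lower-regularity integrals.

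With the \emph{a priori} bounds established, I would deduce the H\"older estimate \eqref{holder bound on u B} by \eqref{holder bound general} applied to $u,B\in W^{1,4}$ in $x$ and a standard argument in $t$ using $\dot u$, $B_t\in L^2_{\mathrm{loc}}(L^2)$. Passing to the limit $\varepsilon\to 0$, the Aubin--Lions lemma gives strong $L^2_{\mathrm{loc}}$ convergence of $\rho^\varepsilon$, $u^\varepsilon$, $B^\varepsilon$; together with the uniform $L^\infty$ bound on $\rho^\varepsilon$ this is enough to identify all nonlinear terms in \eqref{weak sol 1}--\eqref{weak sol 3}, including the quadratic magnetic terms $\tfrac12|B|^2$ and $B^jB$.

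The final and hardest part is \eqref{bound on time integral on nabla u}, which requires controlling $\int_0^T\|\nabla u\|_{L^\infty}d\tau$ when only $\rho_0$ is piecewise $C^{\beta_0}$. Here I would use the splitting $u=u_F+u_P$ described in the introduction. The $F$-part is handled by the weighted bounds on $F$ and $\omega$ already obtained, which via \eqref{L infty bound general} give $\int_0^T\|\nabla u_F\|_{L^\infty}\,d\tau<\infty$. The $P$-part is governed by a Riesz-type representation of $\nabla u_P$ as a singular integral of $P(\rho)-\tilde P$, which for $P(\rho)\in L^\infty$ is at worst log-Lipschitz in $x$, and thus yields H\"older continuity of the Lagrangian flow $x(s,t)$ of $u$ with some exponent $\beta(t)\in(0,\beta_0]$ that deteriorates in time (via an Osgood/Gronwall estimate with logarithmic modulus). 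The mass equation \eqref{MHD1} then transports the piecewise $C^{\beta(t)}$ regularity of $\rho_0$ across the curve $\mathcal C(t)$; this upgraded regularity of $\rho$ feeds back via the Riesz representation of $\nabla u_P$ to give a genuine $L^\infty$ bound for $\nabla u_P$, closing the estimate \eqref{bound on time integral on nabla u}. The subtle obstacle here—beyond what appears in the Navier--Stokes case of Hoff--Santos—is that the magnetic field $B$ is merely $H^1$ in space and enters \eqref{MHD2} quadratically; I would handle this by absorbing the $(\tfrac12|B|^2)_{x_j}-\divv(B^jB)$ terms into the right-hand side of the $\Delta F$ equation and estimating their contribution to $u_F$ using the already-established bounds on $\nabla B$ in $L^4$.
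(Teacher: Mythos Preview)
Your overall architecture matches the paper's proof almost step for step: mollify the data, invoke Kawashima's local $H^3$ theory, derive the time-weighted \emph{a priori} bounds \eqref{bound on weak solution} (including the interpolation between the $s=0$ and $s=1$ energy levels to capture the $H^s$ dependence), close the pointwise density bound via the $\log\rho$ identity along integral curves, extract a limit, and finally establish \eqref{bound on time integral on nabla u} through the decomposition $u=u_F+u_P$ with the log-Lipschitz/Osgood argument for $u_P$. This is exactly the paper's route.

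There is, however, one genuine gap in your compactness step. You claim that the Aubin--Lions lemma yields strong $L^2_{\mathrm{loc}}$ convergence of $\rho^\varepsilon$; it does not. The \emph{a priori} bounds give no positive-order spatial regularity for $\rho^\varepsilon$ whatsoever---there is no control on $\nabla\rho^\varepsilon$ in any $L^p$---so no Aubin--Lions triple $X\hookrightarrow\hookrightarrow Y\hookrightarrow Z$ with $Y=L^2$ is available for the density. Weak-$*$ convergence of $\rho^\varepsilon$ in $L^\infty$ is insufficient to identify the limit of the nonlinear term $P(\rho^\varepsilon)$ in \eqref{weak sol 2}. The paper handles this by invoking the Lions/Feireisl effective-viscous-flux argument (Lemma~\ref{Compactness lemma} and the reference to \cite{lions98}, \cite{feireisl04}) to obtain strong convergence $\rho^{\varepsilon_k}(\cdot,t)\to\rho(\cdot,t)$ in $L^2_{\mathrm{loc}}(\R^3)$; this is a nontrivial ingredient that your proposal omits. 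For $u^\varepsilon$ and $B^\varepsilon$ the paper in fact gets more than Aubin--Lions: the space--time H\"older bound of Proposition~\ref{Holder continuity proposition} gives \emph{uniform} convergence on compact subsets of $\R^3\times(0,\infty)$, which then makes the passage to the limit in the convective term $\rho u^j u$ and the quadratic magnetic terms straightforward once the strong convergence of $\rho^\varepsilon$ is in hand.
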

\begin{remark}
We point out that the piecewise $C^{\beta_0}$-norm of $\rho_0$ is required to be bounded but not necessary small. This is different from the case of Hoff \cite{hoff02}, in which the author imposed a smallness assumption on $\|\rho_0\|_{C^{\beta_0}_{pw}}$ and proved that $\|\rho(\cdot,t)\|_{C^{\beta_0}_{pw}}$ remains small for all $t>0$. The key observation in our present work is that, without the smallness assumption on $\|\rho_0\|_{C^{\beta_0}_{pw}}$, we are able to show that there exists $\beta(t)\in(0,\beta_0]$ such that $\|\rho(\cdot,t)\|_{C^{\beta(t)}_{pw}}$ remains finite for finite time.
\end{remark}

Once we obtain Theorem~\ref{Existence theorem}, we address the uniqueness of weak solutions given in Theorem~\ref{Existence theorem} which can be summarised as follows. Theorem~\ref{Uniqueness theorem} illustrates the continuous dependence on the initial data of weak solutions, which generalises the results in Hoff \cite{hoff06} for compressible Navier-Stokes equations. 
\begin{theorem}\label{Uniqueness theorem} 
Fix constants $N,L,\rho_1,\rho_2,\tilde\rho>0$, $q>6$ and $s\in(\frac{1}{2},1]$ and assume that $\mu$, $\lambda$ satisfy \eqref{condition on vis} and $P$ satisfies
\begin{equation}\label{isothermal}
P(\rho)=K\rho
\end{equation}
for some constant $K$. Assume that $(\rho_0,u_0,B_0)$ and $(\bar{\rho}_0,\bar{u}_0,\bar{B}_0)$ are functions satisfying \eqref{bound on initial density}-\eqref{definition of C0} and \eqref{piecewise holder for initial rho} as in Theorem~\ref{Existence theorem}. Then for each $T>0$, there exists $C(T)>0$ such that if $(\rho,u,B)$ and $(\bar{\rho},\bar{u},\bar{B})$ are weak solutions to \eqref{MHD1}-\eqref{MHD4} as described in Theorem~\ref{Existence theorem} with initial data $(\rho_0,u_0,B_0)$ and $(\bar{\rho}_0,\bar{u}_0,\bar{B}_0)$ respectively, then we have
\begin{align}\label{estimate on difference of solutions}
&\Big(\intoxt(|u-\bar{u}|^2+|B-\bar{B}|^2)dxd\tau\Big)^\frac{1}{2}+\sup_{0\le \tau\le T}\|(\rho-\bar{\rho})(\cdot,t)\|_{H^{-1}}\notag\\
&\qquad\qquad\qquad\le C(T)\Big[\|\rho_0-\bar{\rho}_0\|_{L^2}+\|\rho_0u_0-\bar{\rho}_0\bar{u}_0\|_{L^2}+\|B_0-\bar{B}_0\|_{L^2}\Big].
\end{align}
\end{theorem}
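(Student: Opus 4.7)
The plan is to track the differences $R := \rho - \bar\rho$, $U := u - \bar u$, and $W := B - \bar B$ in mixed norms---$L^\infty_t H^{-1}_x$ for the density and $L^2_t L^2_x$ for the velocity and magnetic field---and to close a Gronwall inequality using the time-integrable bound $\int_0^T(\|\nabla u\|_{L^\infty} + \|\nabla\bar u\|_{L^\infty})d\tau < \infty$ from \eqref{bound on time integral on nabla u}. Subtracting the weak formulations \eqref{weak sol 1}--\eqref{weak sol 3} for the two solutions and setting $M := \rho u - \bar\rho\bar u = \rho U + R\bar u$, I would obtain the mass equation $R_t + \divv M = 0$, a momentum equation
\[
M_t + \divv(\rho u\otimes U + M\otimes \bar u) + K\nabla R + \tfrac12\nabla(|B|^2 - |\bar B|^2) - \divv(B\otimes B - \bar B\otimes \bar B) = \mu\Delta U + \lambda\nabla\divv U,
\]
and an analogous diffusion-transport equation for $W$ with coefficient $\nu$.

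For the density piece, solving $-\Delta\psi = R$ so that $\|\nabla\psi\|_{L^2} = \|R\|_{H^{-1}}$ and testing the mass equation against $\psi$ gives $\tfrac12\frac{d}{dt}\|R\|_{H^{-1}}^2 = \int M\cdot\nabla\psi\,dx$. The $\rho U$-contribution is bounded by $C\|U\|_{L^2}\|R\|_{H^{-1}}$ using \eqref{pointwise bound on rho theorem}; the transport piece $\int R\bar u\cdot\nabla\psi$, after substituting $R = -\Delta\psi$ and integrating by parts twice, produces only $C\|\nabla\bar u\|_{L^\infty}\|R\|_{H^{-1}}^2$ (a DiPerna--Lions-type commutator). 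For the velocity and magnetic field, I would test the momentum difference against $U$ and the $W$-equation against $W$. The standard cancellation between $\int(\rho U)_t\cdot U$ and $\int\divv(\rho u\otimes U)\cdot U$ via the mass equation for $\rho$ produces $\frac{d}{dt}\int\tfrac{\rho}{2}|U|^2$ together with the coercive viscous terms $\mu\|\nabla U\|_{L^2}^2 + \lambda\|\divv U\|_{L^2}^2$ and $\nu\|\nabla W\|_{L^2}^2$. The remaining transport and commutator contributions from $\int\divv(M\otimes\bar u)\cdot U$ and $\int(R\bar u)_t\cdot U$ are bounded by combinations of $\|U\|_{L^2}$, $\|R\|_{H^{-1}}$, and $\|\nabla U\|_{L^2}$ using $\|\bar u\|_{L^\infty}$, $\|\nabla\bar u\|_{L^\infty}$, and the weighted regularity of $\dot{\bar u}$ from \eqref{bound on weak solution}. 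The pressure term $-K\int R\,\divv U$ is the delicate one: I would invoke the effective-viscous-flux decomposition $KR = (\mu+\lambda)\divv U - (F - \bar F)$ so that, after absorbing one piece into the viscous dissipation, what remains is $\int(F-\bar F)\divv U$, which is controlled via the $L^2$ bound on $\nabla F$ coming from \eqref{poisson in F}. The bilinear magnetic couplings in both equations are handled by $\|B\|_{L^\infty}, \|\bar B\|_{L^\infty}\le C$ together with Young's inequality, yielding bounds of the form $C(\|W\|_{L^2}\|\nabla U\|_{L^2} + \|U\|_{L^2}\|\nabla W\|_{L^2})$.

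Setting $E(t) := \|R(\cdot,t)\|_{H^{-1}}^2 + \int\rho|U|^2\,dx + \|W(\cdot,t)\|_{L^2}^2$, the above combine into
\[
E'(t) + c\bigl(\|\nabla U\|_{L^2}^2 + \|\nabla W\|_{L^2}^2\bigr) \le \bigl(g(t) + C\|\nabla u\|_{L^\infty} + C\|\nabla\bar u\|_{L^\infty}\bigr)E(t)
\]
for some $g\in L^1(0,T)$, and Gronwall yields $\sup_{[0,T]}E(t)\le C(T)E(0)$. Integrating the dissipation in time controls $\int_0^T(\|U\|_{L^2}^2 + \|W\|_{L^2}^2)d\tau$ via $\int\rho|U|^2 \ge \tfrac12\rho_1\|U\|_{L^2}^2$ from \eqref{pointwise bound on rho theorem}. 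The initial value $E(0)$ is bounded by the RHS of \eqref{estimate on difference of solutions} since $M(0) = \rho_0 u_0 - \bar\rho_0\bar u_0$, $\|R(0)\|_{H^{-1}}\le C\|R(0)\|_{L^2}$, and $\rho_0|U_0|^2$ splits as $\rho_0^{-1}|M_0 - R_0\bar u_0|^2 \le C(\|M_0\|_{L^2}^2 + \|R_0\|_{L^2}^2)$ using $\|\bar u_0\|_{L^\infty}\le C$.

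The main obstacle is the pressure term: with $R$ only in $H^{-1}$ and $\divv U$ only in $L^2$, no direct Cauchy--Schwarz closes the coupling. The isothermal hypothesis \eqref{isothermal}, which linearises $P(\rho) - P(\bar\rho) = KR$, together with the effective-viscous-flux trick is precisely what allows the bad term to be absorbed into the viscous dissipation and a manageable integral-in-time remainder; and the piecewise H\"older regularity of the density in Theorem~\ref{Existence theorem}, via the bound on $\int_0^T\|\nabla u\|_{L^\infty}d\tau$ in \eqref{bound on time integral on nabla u}, is what keeps the Gronwall exponent finite on every finite time interval. The bilinear magnetic coupling adds technical complications that are absent in the Navier--Stokes case of \cite{hoff06}, but these are handled by the uniform $L^\infty$ bounds on $B$ and $\bar B$ supplied by Theorem~\ref{Existence theorem}.
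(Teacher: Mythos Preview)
Your direct Eulerian energy scheme is a natural first attempt, but it does not close at the regularity level of Theorem~\ref{Existence theorem}, and the paper takes a completely different route precisely because of this. The fatal point is the pressure coupling $K\int R\,\divv U$. Your effective-viscous-flux substitution $KR=(\mu+\lambda)\divv U-(F-\bar F)$ moves the full term $(\mu+\lambda)\|\divv U\|_{L^2}^2$ to the right-hand side; since the dissipation on the left is only $\mu\|\nabla U\|_{L^2}^2+\lambda\|\divv U\|_{L^2}^2$ and $\|\nabla U\|_{L^2}^2=\|\mathrm{curl}\,U\|_{L^2}^2+\|\divv U\|_{L^2}^2$, what survives is merely $\mu\|\mathrm{curl}\,U\|_{L^2}^2$ --- you have destroyed the coercivity in $\divv U$ and can no longer absorb the remaining $\int(F-\bar F)\divv U$. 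Trying to bound $F-\bar F$ in $L^2$ either from its definition or from the Poisson equation \eqref{poisson in F} reintroduces $\|R\|_{L^2}$ or $\|\rho\dot u-\bar\rho\dot{\bar u}\|_{L^2}$, neither of which your Gronwall tracks. The same obstruction appears in the transport terms you flag as routine: in $\int\divv(M\otimes\bar u)\cdot U$ and $\int(R\bar u)_t\cdot U$ the piece $R\bar u$ produces integrals such as $\int R\,\bar u^j\bar u^k\partial_kU^j$ and $\int R\,\bar u_t\cdot U$ that cannot be bounded by $\|R\|_{H^{-1}}$ without an extra derivative on $\bar u\otimes\bar u$ or on $U$, which is unavailable here.

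The paper avoids all of this by following Hoff \cite{hoff06}: it passes to the Lagrangian comparison $z=u-\bar u\circ S$ via the flow map $S$ of Proposition~\ref{prop on S}, and uses a \emph{duality} argument with backward test functions $\psi^\varepsilon,\varphi^\varepsilon$ solving adjoint parabolic problems. The gain is that $\psi^\varepsilon$ carries $D^2_x\psi^\varepsilon\in L^2_{t,x}$ regularity (see \eqref{bound on psi}), so the pressure difference can be paired against $\divv\psi^\varepsilon\in H^1$; and the Lagrangian identity in Proposition~\ref{prop on S} expresses $\bar\rho\circ S$ algebraically in terms of $\rho$ and the initial ratio $\bar\rho_0/\rho_0$, which is why $\|\rho_0-\bar\rho_0\|_{L^2}$ enters directly in the bounds for $\mathcal R_4$ and $\mathcal R_5$ without ever needing $\|R(\cdot,t)\|_{L^2}$. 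The $H^{-1}$ bound on $\rho-\bar\rho$ is then recovered \emph{a posteriori} from the weak form once $\|z\|_{L^2_{t,x}}$ is controlled. In short, the Lagrangian/duality machinery is not a stylistic choice but the mechanism that replaces the missing derivative on $R$; your proposal would need either $\rho-\bar\rho\in L^\infty_tL^2_x$ propagated (which fails because $\nabla\rho\notin L^2$) or $D^2_xU\in L^2_{t,x}$ (which the solutions do not possess near $t=0$).
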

\begin{remark}
Similar to the case as in Hoff \cite{hoff06}, the condition \eqref{isothermal} on the pressure can be replaced by a more general one, namely
\begin{equation*}
\sup_{0\le \tau\le T}\Big\|\nabla\Big(\frac{P(\rho(\cdot,t))-P(\bar{\rho}(\cdot,t))}{\rho(\cdot,t)-\bar{\rho}(\cdot,t)}\Big)\Big\|_{L^3}<\infty.
\end{equation*}
\end{remark}

\medskip

The rest of the paper is organised as follows. In Section~\ref{a priori estimates}, we obtain {\it a priori} estimates for smooth solutions to \eqref{MHD1}-\eqref{MHD4}. In Section~\ref{proof of existence section}, we apply the estimates obtained in Section~\ref{a priori estimates} to prove Theorem~\ref{Existence theorem} and give the details in obtaining bound on the time integral of $\|\nabla u(\cdot,t)\|_{L^\infty}$. Finally in Section~\ref{proof of uniqueness section}, we address the uniqueness of weak solutions given in Theorem~\ref{Existence theorem} by making use of the Lagrangian coordinates (the integral curve of $u$) and bounds on some auxiliary functionals. 

\section{A priori estimates}\label{a priori estimates}

In this section, we obtain some {\it a priori} estimates for smooth local-in-time solutions $(\rho - \tilde{\rho},u,B)$ of \eqref{MHD1}-\eqref{MHD4}. We first recall the following local-in-time existence theorem which was proved by Kawashima \cite{kawashima83}:

\begin{theorem}\label{local existence theorem}
For a given initial data $(\rho_0 - \tilde\rho,u_0,B_0)\in H^3(\R^3)$, there exists $T>0$ and a solution $(\rho ,u,B)$ to \eqref{MHD1}-\eqref{MHD4} defined on $\R^3\times[0,T]$ such that 
\begin{equation}\label{smooth local solution 1}
\rho - \tilde{\rho}\in C([0,T];H^{3}(\R^3))\cap C^1 ([0,T];H^{2}(\R^3))
\end{equation}
and
\begin{equation}\label{smooth local solution 2}
u,B\in C([0,T];H^{3}(\R^3))\cap C^1 ([0,T];H^1 (\R^3))\cap L^2([0,T];H^4 (\R^3)).
\end{equation}
\end{theorem}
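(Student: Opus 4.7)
The plan is to construct a smooth local solution by a Picard iteration adapted to the hyperbolic-parabolic structure of \eqref{MHD1}-\eqref{MHD4}. Starting from the initial data, I would set $(\rho^{0},u^{0},B^{0})\equiv(\rho_{0},u_{0},B_{0})$ and define iterates $(\rho^{k+1},u^{k+1},B^{k+1})$ by linearization: the mass equation becomes a linear transport equation for $\rho^{k+1}$ with drift $u^{k}$, while the momentum and induction equations become linear parabolic systems for $u^{k+1}$ and $B^{k+1}$ with coefficients frozen at $(\rho^{k},u^{k},B^{k})$. Standard transport theory (method of characteristics for smooth data) yields $\rho^{k+1}-\tilde\rho\in C([0,T];H^{3})$ and keeps $\rho^{k+1}$ pointwise bounded away from zero on a short interval $[0,T]$, while standard linear parabolic theory with divergence-form elliptic part $\mu\Delta+\lambda\nabla\divv$ (coercive thanks to \eqref{condition on vis}) gives $u^{k+1},B^{k+1}\in C([0,T];H^{3})\cap L^{2}([0,T];H^{4})$ with regularity controlled by the data and by the norms of the previous iterate.

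The heart of the matter is to close uniform-in-$k$ energy estimates in $H^{3}$ on a short time interval whose length depends only on the $H^{3}$-size of the initial data. For each multi-index $|\alpha|\le 3$, I would differentiate the linearized system, pair $\partial^{\alpha}\rho^{k+1}$ with $P'(\rho^{k})\partial^{\alpha}\rho^{k+1}/\rho^{k}$ (the standard symmetrizer for the acoustic part) and pair $\partial^{\alpha}u^{k+1}$, $\partial^{\alpha}B^{k+1}$ with themselves, so that the leading-order mass/momentum coupling cancels and the parabolic terms produce dissipation $\mu\|\nabla\partial^{\alpha}u^{k+1}\|_{L^{2}}^{2}+\lambda\|\divv\partial^{\alpha}u^{k+1}\|_{L^{2}}^{2}+\nu\|\nabla\partial^{\alpha}B^{k+1}\|_{L^{2}}^{2}$. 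The nonlinear commutators $[\partial^{\alpha},u^{k}\cdot\nabla]\rho^{k+1}$ and the magnetic force terms $\partial^{\alpha}(\tfrac{1}{2}|B^{k}|^{2})_{x_{j}}-\partial^{\alpha}\divv((B^{k})^{j}B^{k})$ are estimated by Moser-type inequalities using $H^{2}\hookrightarrow L^{\infty}$ in $\R^{3}$, giving a differential inequality of the form $\tfrac{d}{dt}E_{k+1}\le C(E_{k})(1+E_{k+1})^{N}$. Choosing $T$ small depending only on the initial $H^{3}$-norm closes the bound uniformly in $k$, and a maximum-principle argument along the characteristics of $u^{k}$ simultaneously preserves $\tfrac{1}{2}\rho_{1}\le\rho^{k+1}\le 2\rho_{2}$.

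With uniform $H^{3}$ bounds in hand, I would show the iterates form a Cauchy sequence in a weaker norm, say $C([0,T];H^{2})$, by subtracting consecutive equations and running the same energy method on the differences; the loss of one derivative is compensated by the uniform $H^{3}$ bound, and shrinking $T$ further if needed yields a contraction. Banach-Alaoglu then provides weak-$\ast$ limits in $L^{\infty}([0,T];H^{3})$ and weak limits in $L^{2}([0,T];H^{4})$, and the strong $H^{2}$ convergence is enough to pass to the limit in every nonlinear term, including the Maxwell coupling, since $H^{2}\hookrightarrow L^{\infty}$. The divergence-free condition \eqref{MHD4} is preserved because the induction equation \eqref{MHD3} together with $\divv(B_{0})=0$ implies $(\divv B)_{t}=\nu\Delta\divv B$, so $\divv B\equiv 0$ on $[0,T]$. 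Finally, continuity in time at the top regularity level for $\rho-\tilde\rho$, $u$, $B$ follows from a standard Lions-Magenes weak-strong continuity argument, using that the time derivatives are controlled in $H^{1}$ (for $\rho$) and in $H^{2}$ (for $u$ and $B$).

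The main obstacle is the uniform $H^{3}$ stage: the hyperbolic mass equation does not produce dissipation for $\rho$, so the only way to control $\|\rho-\tilde\rho\|_{H^{3}}$ is to exploit cancellation with the parabolic momentum equation via the symmetrizer. The magnetic coupling compounds this difficulty, because terms like $\partial^{3}\divv(B^{j}B)$ contain three derivatives of $B$ and must be matched against the $L^{2}$ dissipation $\nu\|\nabla^{4}B\|_{L^{2}}^{2}$ through integration by parts, a balance that is tight and where the condition $\mu/\lambda>4$ and the strict positivity of $\nu$ play a decisive role. All other steps reduce to linear theory and soft functional-analytic arguments once this closure is achieved.
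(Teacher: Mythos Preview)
The paper does not actually prove Theorem~\ref{local existence theorem}; it simply recalls the statement and attributes the proof to Kawashima \cite{kawashima83}. Your sketch follows the standard symmetrizer/energy approach that Kawashima used for hyperbolic--parabolic composite systems, so in that sense you have supplied more detail than the paper itself.

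That said, there is one genuine misstep in your proposal. You assert in the final paragraph that the condition $\mu/\lambda>4$ ``plays a decisive role'' in closing the $H^{3}$ energy estimate. This is not correct: for \emph{local} existence with $H^{3}$ data, the Lam\'e operator $\mu\Delta+\lambda\nabla\divv$ is coercive as soon as $\mu>0$ and $\mu+\lambda>0$, and the magnetic dissipation requires only $\nu>0$. The ratio condition $\mu/\lambda>4$ enters the paper only later, in the $L^{6}$ estimate of Lemma~\ref{L6 estimate}, where it makes the term $(-24\lambda+6\mu)\int|u|^{2}|\nabla(|u|^{2})|^{2}$ nonnegative; it is a global smallness device, not a local well-posedness requirement. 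If your $H^{3}$ closure genuinely relied on $\mu/\lambda>4$, that would signal something has gone wrong in the bookkeeping. In fact the magnetic force term $\partial^{\alpha}\divv(B^{j}B)$ in the momentum equation is handled, after one integration by parts onto $\partial^{\alpha}u^{j}$, by the Moser product estimate $\|\partial^{\alpha}(B^{j}B)\|_{L^{2}}\le C\|B\|_{L^{\infty}}\|B\|_{H^{3}}$ together with Young's inequality and the velocity dissipation $\mu\|\nabla\partial^{\alpha}u\|_{L^{2}}^{2}$; no special viscosity ratio is needed.

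A minor slip: in your last sentence you state that the time derivatives are controlled in $H^{1}$ for $\rho$ and in $H^{2}$ for $u,B$, but the regularity in \eqref{smooth local solution 1}--\eqref{smooth local solution 2} is the other way around. From the mass equation $\rho_{t}=-\divv(\rho u)$ with $\rho,u\in C([0,T];H^{3})$ one gets $\rho_{t}\in C([0,T];H^{2})$, while from the momentum and induction equations the worst term is $\Delta u$ (respectively $\Delta B$), which lies only in $C([0,T];H^{1})$, giving $u_{t},B_{t}\in C([0,T];H^{1})$.
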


The estimates for $(\rho - \tilde{\rho},u,B)$ given in this section will be crucial in proving Theorem~\ref{Existence theorem}. The main goal is to prove the following theorem:

\begin{theorem}\label{a priori bounds finite time theorem}
Fix constants $L,\rho_1,\rho_2,\tilde\rho>0$, $q>6$ and $s\in(\frac{1}{2},1]$, and assume that $\mu$, $\lambda$, $P$ satisfy \eqref{condition on vis}-\eqref{condition on pressure}. Let the initial data $(\rho_0-\tilde\rho,u_0,B_0)\in H^3(\R^3)$ be given satisfying \eqref{bound on initial density}-\eqref{definition of C0}. There exists positive constants $d$, $\theta$, $C$ such that if $(\rho,u,B)$ is a solution of \eqref{MHD1}-\eqref{MHD4} on $\R^3\times [0,T]$ satisfying \eqref{smooth local solution 1}-\eqref{smooth local solution 2}, then we have 
\begin{equation}\label{bound on weak solution finite time}
\mathcal{A}(T)\le CC_0^{\theta}
\end{equation}
and 
\begin{equation}\label{pointwise bound on rho finite time}
\frac{1}{2}\rho_1\le\rho(x,t)\le 2\rho_2,\; (x,t)\in\R^3\times[0,T],
\end{equation}
where $\mathcal{A}(T)$ is given by
\begin{align}\label{def of A(t)}
\mathcal{A}(T)&=\sup_{0\le \tau\le T}\Big[\intox(|\rho-\tilde\rho|^2+|u|^2+|B|^2)\Big](x,\tau)dx\\
&+\sup_{0\le \tau\le T}\Big[\intox\sigma^{1-s}(|\nabla u|^2+|\nabla B|^2)+\sigma^{2-s}(|\dot{u}|^2+|B_t|^2)\Big](x,\tau)dx\notag\\
&+\intoxt\Big(|\nabla u|^2+|\nabla B|^2+\sigma^{1-s}(|\dot{u}|^2+|B_t|^2)+\sigma^{2-s}(|\nabla\dot{u}|^2+|\nabla B_t|^2)\Big)dxd\tau,\notag
\end{align}
and $\sigma(t)=\min\{1,t\}$.
\end{theorem}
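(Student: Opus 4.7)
\medskip

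\noindent\textbf{Proof plan for Theorem~\ref{a priori bounds finite time theorem}.}

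The plan is a standard continuation/bootstrap argument: I would fix $M>0$ to be chosen large and $d>0$ small, define
\[
T^{*}=\sup\Bigl\{\,T'\in[0,T]\ :\ \mathcal{A}(T')\le MC_{0}^{\theta},\ \ \tfrac{1}{4}\rho_{1}\le\rho(x,t)\le 4\rho_{2}\ \text{on }\R^{3}\times[0,T']\,\Bigr\},
\]
and then show that under the smallness hypothesis $C_{0}\le d$ the inequalities defining $T^{*}$ can be improved to those of \eqref{bound on weak solution finite time}--\eqref{pointwise bound on rho finite time}, forcing $T^{*}=T$. Since Theorem~\ref{local existence theorem} guarantees that smooth solutions persist, all computations below are justified on $[0,T^{*}]$.

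I would run the a priori estimates in the following order. \emph{Step 1 (basic energy).} Pair \eqref{MHD2} with $u$, \eqref{MHD3} with $B$, and combine with \eqref{MHD1} tested against $P(\rho)/\rho$ in the customary way to obtain control of $\sup_{t}\int(|\rho-\tilde\rho|^{2}+|u|^{2}+|B|^{2})+\int_{0}^{T}\!\!\int(|\nabla u|^{2}+|\nabla B|^{2})$ by $CC_{0}$. \emph{Step 2 (first-order weighted estimate).} Multiply \eqref{derivation for F} by $\sigma^{1-s}\dot u$ and the induction equation by $\sigma^{1-s}B_{t}$, exploiting the orthogonal decomposition $\rho\dot u=\nabla F+\mu\,\nabla\times\omega+(\text{magnetic terms})$ from \eqref{derivation for F} together with the Poisson identity \eqref{poisson in F}. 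This yields control of $\sup_{t}\int\sigma^{1-s}(|\nabla u|^{2}+|\nabla B|^{2})$ and of $\int_{0}^{T}\!\!\int\sigma^{1-s}(|\dot u|^{2}+|B_{t}|^{2})$; the $s$-dependent weight compensates for the fact that one only controls $\int_{0}^{1}\sigma^{-s}(\|\nabla u_{0}\|_{L^{2}}^{2}+\|\nabla B_{0}\|_{L^{2}}^{2})\,d\tau$ from the $H^{s}$ hypothesis through an interpolation between \eqref{Lr bound general} and the $H^{s}$-norm of the initial data. \emph{Step 3 (second-order weighted estimate).} Apply the material derivative $\partial_{t}+\mathrm{div}(u\,\cdot)$ to \eqref{MHD2} and pair with $\sigma^{2-s}\dot u$; analogously differentiate \eqref{MHD3} in time and pair with $\sigma^{2-s}B_{t}$. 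After integrations by parts and absorbing cubic terms via Gagliardo--Nirenberg ($\|\nabla u\|_{L^{4}}\le C\|\nabla u\|_{L^{2}}^{1/4}\|\nabla F,\nabla\omega\|_{L^{2}}^{3/4}+\cdots$) one obtains $\sup_{t}\int\sigma^{2-s}(|\dot u|^{2}+|B_{t}|^{2})$ and $\int_{0}^{T}\!\!\int\sigma^{2-s}(|\nabla\dot u|^{2}+|\nabla B_{t}|^{2})$.

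\emph{Step 4 (pointwise bound on $\rho$).} Rewrite \eqref{MHD1} as an ODE along integral curves of $u$,
\[
(\mu+\lambda)\tfrac{d}{dt}\bigl[\log\rho-\log\tilde\rho\bigr](x(t),t)+\bigl(P(\rho)-\tilde P\bigr)=-F(x(t),t),
\]
and use the representation $F=\Gamma_{x_{j}}\!*g^{j}$ from \eqref{poisson in F} to integrate $\int_{t_{1}}^{t_{2}}F(x(\tau),\tau)\,d\tau$ after exchanging one derivative with the $t$-integral; the cancellation produces $\int F\,d\tau=O(C_{0}^{\theta})$ by the bounds of Steps 1--3 together with \eqref{Lr bound general}--\eqref{L infty bound general} applied to $g$. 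Choosing $d$ small enough, the monotonicity of $P$ then upgrades the a priori pointwise bound $\tfrac{1}{4}\rho_{1}\le\rho\le 4\rho_{2}$ to $\tfrac{1}{2}\rho_{1}\le\rho\le 2\rho_{2}$, which closes the bootstrap.

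\emph{Anticipated obstacle.} The hardest estimate is Step 3, and specifically the treatment of the coupling terms $\tfrac12\nabla|B|^{2}-\mathrm{div}(B\otimes B)$ when the material derivative is applied to \eqref{MHD2}: they produce integrals involving $\nabla B\cdot\nabla B_{t}$ paired with $\sigma^{2-s}\dot u$, and one must show these are dominated by $\eta\!\int\sigma^{2-s}(|\nabla\dot u|^{2}+|\nabla B_{t}|^{2})+C_{0}^{\theta}$-type quantities. This requires the weighted bounds from Step 2 as inputs, and correct tracking of the $\sigma$-power so that the $t\to 0$ singularity remains integrable precisely under $s>\tfrac12$; the threshold $s=\tfrac12$ is exactly where the Gagliardo--Nirenberg interpolation $\|\nabla u\|_{L^{4}}^{4}\lesssim\|\nabla u\|_{L^{2}}\|\nabla u\|_{L^{6}}^{3}$ balances with the weight $\sigma^{1-s}$. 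A careful accounting, exactly mirroring the Navier--Stokes analysis of Hoff~\cite{hoff02} but now absorbing the magnetic contributions using the $L^{2}$-parabolic theory applied to \eqref{MHD3}, closes the argument.
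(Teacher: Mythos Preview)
Your overall architecture (bootstrap, basic energy, two levels of weighted estimates, then the pointwise $\rho$ bound via the integral-curve ODE) matches the paper's. But Step~2 as you describe it has a genuine gap, and it is exactly where the $s$-dependence enters.

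If you multiply \eqref{MHD2} by $\sigma^{1-s}\dot u$ and integrate, differentiating the weight produces the term
\[
(1-s)\int_{0}^{t}\sigma^{-s}\!\int_{\R^{3}}|\nabla u|^{2}\,dx\,d\tau,
\]
and this is \emph{not} controlled by the basic energy $\int_{0}^{T}\!\!\int|\nabla u|^{2}\le CC_{0}$ together with the quantity you are estimating: writing $\sigma^{-s}=\sigma^{-1}\cdot\sigma^{1-s}$ shows the obstruction, and no H\"older splitting helps since $\int|\nabla u|^{2}(\tau)$ is only $L^{1}$ in $\tau$. Your sentence ``the $s$-dependent weight compensates \dots\ through an interpolation between \eqref{Lr bound general} and the $H^{s}$-norm'' does not name a mechanism that closes this.

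The paper avoids this by \emph{not} working directly with the weight $\sigma^{1-s}$. Instead it linearises: one writes $u=w_{1}+w_{2}$ where $\mathcal L_{u}w_{1}=0$, $w_{1}|_{t=0}=u_{0}$, and $\mathcal L_{u}w_{2}=-\nabla P(\rho)$, $w_{2}|_{t=0}=0$ (with an analogous linear operator $\mathcal L_{B}$ for $B$). For $w_{2}$ one gets the unweighted $H^{1}$ estimate outright because $w_{2}(0)=0$. For $w_{1}$ one proves the estimate with weights $\sigma^{k}$ at the two endpoints $k=0$ and $k=1$ (where the weight-derivative term either vanishes or has exponent $0$), and then uses \emph{Riesz--Thorin interpolation}, exploiting that $w_{1}$ depends linearly on the data $u_{0}$, to obtain the $\sigma^{1-s}$ estimate from $\|u_{0}\|_{H^{s}}$. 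This is the device of Hoff~\cite{hoff02} that you cite, but your write-up replaces it by a direct weighted computation that does not close.

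Two further points you should incorporate. First, for $T\le 1$ the cubic and quartic terms in Steps~2--3 are closed via an $L^{6}$ bound on $u$ and $B$ (this is where the structural condition $\mu>4\lambda$ in \eqref{condition on vis} and the $L^{q}$ hypothesis \eqref{boundedness on Lq initial data} are actually used), not just by Gagliardo--Nirenberg. Second, the paper treats $T\le 1$ and $T>1$ separately: the $L^{6}$ bound is unavailable for large time, and the $T>1$ argument instead closes an inequality of the form $\tilde{\mathcal A}(t)\le C(\tilde{\mathcal A}(t)^{\bar\theta}+C_{0}^{\theta})$ with $\bar\theta>1$ directly from the unweighted quantities.
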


The proof of Theorem~\ref{a priori bounds finite time theorem} will be carried out in a sequence of lemmas. We first establish the bound \eqref{bound on weak solution finite time} {\it under the assumption} that \eqref{pointwise bound on rho finite time} holds for the density $\rho$, which will be given in subsection~\ref{small time estimates} and subsection~\ref{large time estimates}. Then in subsection~\ref{Pointwise bound on rho subsection}, we close the estimates of Theorem~\ref{a priori bounds finite time theorem} by deriving pointwise bounds \eqref{pointwise bound on rho finite time} for $\rho$ under the smallness assumption on $C_0$. This gives an uncontingent estimate for $(\rho,u,B)$ and thereby proving Theorem~\ref{a priori bounds finite time theorem}.

Throughout this section, $C$ will denote a generic positive constant which depends on the same quantities as the constant $C$ in the statement of Theorem~\ref{a priori bounds finite time theorem} but independent of time $t$ and the regularity of initial data. 

We first recall the following estimates on the effective viscous flux $F$ which is defined in \eqref{definition of F}.

\begin{lemma}\label{estimate on effective viscous flux}
Assume that $\rho$ satisfies \eqref{pointwise bound on rho finite time}. For each $p>1$, there is a constant $C>0$ such that for all $t>0$, we have
\begin{equation}\label{bound on F}
\|F(\cdot,t)\|_{L^p}\le C\Big[\|\nabla u(\cdot,t)\|_{L^p}+\|(\rho-\tilde\rho)(\cdot,t)\|_{L^p}\Big],
\end{equation}
and
\begin{equation}\label{bound on nabla F}
\|\nabla F(\cdot,t)\|_{L^p}\le C\Big[\|\dot{u}(\cdot,t)\|_{L^p}+\|B\nabla B(\cdot,t)\|_{L^p}\Big].
\end{equation}
\end{lemma}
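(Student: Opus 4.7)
The first estimate \eqref{bound on F} will follow almost directly from the definition \eqref{definition of F}, namely $F=(\mu+\lambda)\divv(u)-P(\rho)+P(\tilde\rho)$. I would simply apply the triangle inequality to obtain
\[
\|F(\cdot,t)\|_{L^p}\le(\mu+\lambda)\|\nabla u(\cdot,t)\|_{L^p}+\|P(\rho(\cdot,t))-P(\tilde\rho)\|_{L^p}.
\]
The pointwise bound \eqref{pointwise bound on rho finite time} confines $\rho(\cdot,t)$ to the compact interval $[\tfrac12\rho_1,\,2\rho_2]$, on which $P\in C^1$ by \eqref{condition on pressure}; the mean value theorem then gives $|P(\rho)-P(\tilde\rho)|\le C|\rho-\tilde\rho|$ pointwise, and \eqref{bound on F} follows.

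For the gradient estimate \eqref{bound on nabla F}, the plan is to exploit the Poisson equation \eqref{poisson in F}. Writing $F=\Delta^{-1}\divv(g)$, we have
\[
\partial_i F=\partial_i\Delta^{-1}\partial_j g^j,
\]
which is the composition of two Riesz transforms applied to $g^j$. By the standard Calder\'on--Zygmund $L^p$-theory for singular integrals (valid for all $p\in(1,\infty)$), I obtain $\|\nabla F(\cdot,t)\|_{L^p}\le C\|g(\cdot,t)\|_{L^p}$. It remains to estimate each term in $g^j=\rho\dot u^j+(\tfrac12|B|^2)_{x_j}-\divv(B^j B)$. The first term is controlled by $\|\dot u\|_{L^p}$ using the upper bound on $\rho$ from \eqref{pointwise bound on rho finite time}. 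For the magnetic terms, I would use the constraint $\divv(B)=0$ to rewrite $\divv(B^j B)=B^k B^j_{x_k}$ and note that $(\tfrac12|B|^2)_{x_j}=B^k B^k_{x_j}$; both are pointwise bounded by $|B||\nabla B|$, giving the $\|B\nabla B\|_{L^p}$ contribution.

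There is no real obstacle here: the only mildly delicate point is identifying $\nabla F$ with a Calder\'on--Zygmund operator acting on $g$, rather than trying to invert $\Delta$ by convolution with $\Gamma$ and integrating by parts (which would require more care with decay at infinity and with the sense in which $g$ decomposes into a gradient part and a true divergence). Using the $L^p$-boundedness of $\partial_i\partial_j(-\Delta)^{-1}$ sidesteps these issues and yields both bounds in a few lines.
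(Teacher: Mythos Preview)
Your proposal is correct and follows essentially the same route as the paper: the first bound is read off directly from the definition of $F$ together with the pointwise density bound, and the second comes from the Poisson equation \eqref{poisson in F} via $L^p$-boundedness of $\partial_i\partial_j(-\Delta)^{-1}$. The only cosmetic difference is that the paper names the Marcinkiewicz multiplier theorem (a Fourier-side statement) whereas you invoke Calder\'on--Zygmund theory for Riesz transforms; these are equivalent ways of obtaining the same $L^p$ bound for the zero-order multiplier $\xi_i\xi_j/|\xi|^2$.
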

\begin{proof}
The assertion \eqref{bound on F} follows immediately from the definition of $F$, and the proof of \eqref{bound on nabla F} relies on the Poisson equation \eqref{poisson in F} and the Marcinkiewicz multiplier theorem (refer to Stein \cite{stein70}, pg. 96). 
\end{proof}

Using the estimates \eqref{bound on F}-\eqref{bound on nabla F} on $F$, we have the following estimates on $\nabla u$ and $\nabla\omega$:

\begin{lemma}\label{estimate on nabla u lemma}
Assume that $\rho$ satisfies \eqref{pointwise bound on rho finite time}. For each $p>1$, there is a constant $C>0$ depends on $p$ such that for all $t>0$, we have
\begin{align}\label{bound on nabla u}
\|\nabla u(\cdot,t)\|_{L^p}\le C\Big[\|F(\cdot,t)\|_{L^p}+\|\omega(\cdot,t)\|_{L^p}+\|(P-\tilde P)(\cdot,t)\|_{L^p}\Big],
\end{align}
\begin{align}\label{bound on omega}
\|\nabla\omega(\cdot,t)\|_{L^p}\le C\Big[\|\dot{u}(\cdot,t)\|_{L^p}+\|B\nabla B(\cdot,t)\|_{L^p}\Big].
\end{align}
\end{lemma}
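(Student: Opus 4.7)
\emph{Plan.} Both bounds come from singular integral representations for the components of $\nabla u$ and $\nabla\omega$, together with the $L^p$-boundedness of Riesz transform compositions $\partial_i\partial_j \Delta^{-1}$ for $1<p<\infty$ (Marcinkiewicz multiplier theorem, already invoked for \eqref{bound on nabla F}). The bound on $\rho$ from \eqref{pointwise bound on rho finite time} will absorb the $\rho$ factor in $\rho\dot u$.

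\textbf{Bound on $\|\nabla u\|_{L^p}$.} The plan is to express $\nabla u$ in terms of $\divv(u)$ and the vorticity $\omega$. Via the Helmholtz-style identity
\[
u = -\nabla\Delta^{-1}\divv(u) + \text{curl}\,\Delta^{-1}(\text{curl}\,u),
\]
each entry $u^j_{x_i}$ can be written as a linear combination of operators of the form $\partial_k\partial_l\Delta^{-1}$ applied to $\divv(u)$ or to components of $\omega$. Since these second-order Riesz transform compositions are bounded on $L^p(\R^3)$ for every $p\in(1,\infty)$, I would obtain
\[
\|\nabla u(\cdot,t)\|_{L^p}\le C\bigl(\|\divv(u)(\cdot,t)\|_{L^p}+\|\omega(\cdot,t)\|_{L^p}\bigr).
\]
Finally, the definition \eqref{definition of F} gives $\divv(u)=(\mu+\lambda)^{-1}(F+P(\rho)-\tilde P)$, and substituting yields \eqref{bound on nabla u}.

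\textbf{Bound on $\|\nabla\omega\|_{L^p}$.} From \eqref{derivation for F}, $\mu\,\omega^{j,k}_{x_k}=g^j-F_{x_j}$, where $g^j=\rho\dot u^j+(\tfrac12|B|^2)_{x_j}-\divv(B^jB)$. I would take the antisymmetric combination $\partial_{x_i}$ of the $j$th equation minus $\partial_{x_j}$ of the $i$th equation, so that the pure-gradient term $F_{x_j}$ drops out by equality of mixed partials, and on the left one recognises the Laplacian acting on a single component of $\omega$:
\[
\mu\,\Delta\omega^{j,i}=g^j_{x_i}-g^i_{x_j}.
\]
Inverting the Laplacian gives $\mu\,\omega^{j,i}=\Delta^{-1}\partial_i\,g^j-\Delta^{-1}\partial_j\,g^i$, hence $\mu\,\partial_k\omega^{j,i}=(\partial_k\partial_i\Delta^{-1})g^j-(\partial_k\partial_j\Delta^{-1})g^i$, and the Marcinkiewicz multiplier theorem yields $\|\nabla\omega\|_{L^p}\le C\|g\|_{L^p}$. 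The pure-gradient term $(\tfrac12|B|^2)_{x_j}$ could instead be absorbed already at the equation level; but bounding it directly in $L^p$ by $\|B\nabla B\|_{L^p}$ also works. Using $\divv(B)=0$ to rewrite $\divv(B^jB)=B^k B^j_{x_k}$ and applying the pointwise bound on $\rho$ gives $\|g\|_{L^p}\le C(\|\dot u\|_{L^p}+\|B\nabla B\|_{L^p})$, which is \eqref{bound on omega}.

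\textbf{Main obstacle.} There is no substantive obstacle: both estimates reduce to $L^p$-continuity of Riesz-type multipliers, which is exactly the tool already cited in the previous lemma. The only care needed is the index bookkeeping in the antisymmetrisation that kills $F$ and $\nabla(|B|^2)$ from the elliptic equation for $\omega$, and the use of $\divv(B)=0$ to recast $\divv(B^jB)$ as a pointwise $B\cdot\nabla B$ quantity.
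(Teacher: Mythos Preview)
Your proof is correct and follows essentially the same route as the paper: for \eqref{bound on nabla u} the paper uses the identity $(\mu+\lambda)\Delta u^j=F_{x_j}+(\mu+\lambda)\omega^{j,k}_{x_k}+(P-\tilde P)_{x_j}$, which is just your Helmholtz decomposition rewritten via the definition of $F$, and for \eqref{bound on omega} it derives exactly the Poisson equation $\mu\Delta\omega^{j,k}=(\rho\dot u^j)_{x_k}-(\rho\dot u^k)_{x_j}-(B\cdot\nabla B^j)_{x_k}+(B\cdot\nabla B^k)_{x_j}$ that your antisymmetrisation produces. Both conclusions then rest on the same $L^p$ boundedness of second-order Riesz multipliers.
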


\begin{proof} By the definition \eqref{definition of F} of $F$,
\begin{equation*}
(\mu+\lambda)\Delta u^j=F_{x_j}+(\mu+\lambda)\omega^{j,k}_{x_k}+(P-\tilde{P})_{x_j}.
\end{equation*}
Hence by differentiating and taking the Fourier transform on the above equation, we can apply Marcinkiewicz multiplier theorem in a similar as we did in Lemma~\ref{estimate on effective viscous flux} and \eqref{bound on nabla u} follows.

For the case of $\nabla\omega$, by direct computation, we have
\begin{equation*}
\mu\Delta\omega=(\rho\dot{u}^j)_{x_k}-(\rho\dot{u}^k)_{x_j}-(\nabla B^{j}\cdot B)_{x_k}+(\nabla B^{k}\cdot B)_{x_j},
\end{equation*}
and using the same argument as for $\nabla u$, \eqref{bound on omega} immediately follows.
\end{proof}

We now start giving the estimates on $(\rho,u,B)$ which will be used in deriving \eqref{bound on weak solution finite time}. We begin with the following $L^2$ estimates on $(\rho,u,B)$ for all $T>0$:

\begin{lemma}\label{L2 estimate}
Assume that $\rho$ satisfies \eqref{pointwise bound on rho finite time}. For $T>0$, we have
\begin{align}\label{L2 bound}
\sup_{0\le \tau\le T}\intox(|\rho-\tilde\rho|^2+\rho|u|^2+|B|^2)dx+\intoxt(|\nabla u|^2+|\nabla B|^2)dxd\tau\le CC_0.
\end{align}
\end{lemma}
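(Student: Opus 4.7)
The plan is the classical energy balance for \eqref{MHD1}--\eqref{MHD4}, adapted to the presence of a magnetic field, using the fact that the Lorentz term in the momentum equation cancels exactly against the work done by the velocity on the magnetic field in the induction equation once $\divv(B)=0$ is invoked.

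First I would introduce the potential-energy density
\begin{equation*}
G(\rho) = \rho\int_{\tilde\rho}^{\rho}\frac{P(s)-P(\tilde\rho)}{s^2}\,ds,
\end{equation*}
so that, under \eqref{condition on pressure} and the pointwise bound $\tfrac{1}{2}\rho_1 \le \rho \le 2\rho_2$ from \eqref{pointwise bound on rho finite time}, there is $C>0$ with $C^{-1}(\rho-\tilde\rho)^2 \le G(\rho) \le C(\rho-\tilde\rho)^2$. A direct computation using \eqref{MHD1} then yields $\frac{d}{dt}\intox G(\rho)\,dx = -\intox(P(\rho)-\tilde P)\divv(u)\,dx$, which is the evolution of the potential energy.

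Next I would test \eqref{MHD2} with $u^j$ and \eqref{MHD3} with $B^j$, integrating by parts on $\R^3$ (all boundary terms vanish on the smooth local solutions supplied by Theorem~\ref{local existence theorem}). The velocity test gives
\begin{equation*}
\frac{d}{dt}\intox\tfrac{1}{2}\rho|u|^2\,dx + \intox(\mu|\nabla u|^2 + \lambda(\divv u)^2)\,dx = \intox(P-\tilde P)\divv u\,dx + \intox\bigl(\tfrac{1}{2}|B|^2\divv u - B^jB^k u^j_{x_k}\bigr)\,dx,
\end{equation*}
while the magnetic test, after rewriting $\divv(u^j B) = B^k u^j_{x_k}$ via $\divv(B)=0$, gives
\begin{equation*}
\frac{d}{dt}\intox\tfrac{1}{2}|B|^2\,dx + \nu\intox|\nabla B|^2\,dx = -\intox\tfrac{1}{2}|B|^2\divv u\,dx + \intox B^jB^k u^j_{x_k}\,dx.
\end{equation*}

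Summing these two identities with the evolution of $\intox G(\rho)\,dx$, the pressure terms cancel by construction of $G$ and the four magnetic coupling terms cancel pairwise, leaving the conservation law
\begin{equation*}
\frac{d}{dt}\intox\bigl(\tfrac{1}{2}\rho|u|^2 + G(\rho) + \tfrac{1}{2}|B|^2\bigr)\,dx + \intox\bigl(\mu|\nabla u|^2 + \lambda(\divv u)^2 + \nu|\nabla B|^2\bigr)\,dx = 0.
\end{equation*}
Integrating on $[0,T]$, applying the equivalence $G(\rho)\sim(\rho-\tilde\rho)^2$ and the lower bound $\rho\ge \rho_1/2$ to recover $\|u\|_{L^2}^2$ from $\int\rho|u|^2\,dx$, and comparing with \eqref{definition of C0}, I obtain \eqref{L2 bound}. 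There is no genuine obstacle here; the only step requiring care is the algebraic cancellation of the four magnetic coupling terms, which is a direct consequence of the divergence-free constraint on $B$.
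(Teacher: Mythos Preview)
Your proposal is correct and follows exactly the same approach as the paper: the paper simply states the energy balance identity
\[
\left.\int_{\R^3}\left({\textstyle\frac{1}{2}}\rho|u|^{2}+\mathcal{G}(\rho)+{\textstyle\frac{1}{2}}|B|^2\right)dx\right|_{0}^{t} + \int_{0}^{t}\!\!\!\int_{\R^3}\left(\mu|\nabla u|^{2} + \lambda(\divv u)^{2}+\nu|\nabla B|^2\right)dxd\tau=0
\]
with your $G$ written as $\mathcal{G}$, and remarks that $\int\mathcal{G}(\rho)\,dx$ is comparable to $\|\rho-\tilde\rho\|_{L^2}^2$; you have supplied the derivation, including the cancellation of the Lorentz and induction coupling terms, that the paper leaves implicit. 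One small remark: since \eqref{L2 bound} is stated with $\rho|u|^2$ rather than $|u|^2$, the step of invoking the lower bound $\rho\ge\rho_1/2$ to pass to $\|u\|_{L^2}^2$ is not actually needed here.
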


\begin{proof}
The bound \eqref{L2 bound} follows from the standard energy balance equation, namely
\begin{align*}
&\left.\int_{\R^3}\left({\textstyle\frac{1}{2}}\rho|u|^{2}+\mathcal{G}(\rho)\right)dx\right |_{0}^{t} + \left.\int_{\R^3}{\textstyle\frac{1}{2}}|B|^2 dx\right |_{0}^t \\
&\qquad+ \int_{0}^{t}\!\!\!\int_{\R^3}\left(\mu|\nabla u|^{2} + \lambda(\divv (u))^{2}+\nu|\nabla B|^2\right)dxd\tau=0,
\end{align*}
where $\dis \int_{\R^d}\mathcal{G}(\rho)dx=\int_{\R^d}\left(\rho\int_{\tilde{\rho}}^{\rho}\tau^{-2}( P(\tau)-P(\tilde\rho))d\tau\right)dx$ is comparable to the $L^2(\R^3)$ norm of $(\rho-\tilde\rho)$ (see \cite{hoff95} for related discussion).  
\end{proof}

To proceed further, we have to obtain higher order estimates on $u$ and $B$. Due to the intricate coupling effects between $u$ and $B$, we subdivide the estimates into two cases namely $T\le1$ and $T>1$. These will be illustrated in subsection~\ref{small time estimates} and subsection~\ref{large time estimates} as follows:

\subsection{Estimates on $u$ and $B$ for $T\le1$}\label{small time estimates} 

In this subsection, we obtain estimates on $u$ and $B$ for $T\le1$.
 We start with the following $L^6$ estimates on $u$ and $B$:
 
\begin{lemma}\label{L6 estimate}
Assume that $\rho$ satisfies \eqref{pointwise bound on rho finite time}. For $T\le1$, we have
\begin{align}\label{L6 bound}
\sup_{0\le \tau\le T}\intox(|u|^6+|B|^6)dx\le C_LC_0^{\theta_q},
\end{align}
where $C_L,\theta_q>0$ depends on $L$ and $q$ respectively.
\end{lemma}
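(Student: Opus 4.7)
The plan is to bound $\|u(t)\|_{L^q}$ and $\|B(t)\|_{L^q}$ uniformly on $[0,T]$ using the initial bound $\|u_0\|_{L^q}+\|B_0\|_{L^q}\le L$, and then interpolate with the $L^2$ estimate from Lemma~\ref{L2 estimate} to recover the desired $L^6$ control. Since $q>6$, interpolation gives
$$\|u\|_{L^6}^6 \le C\|u\|_{L^2}^{2(q-6)/(q-2)}\,\|u\|_{L^q}^{4q/(q-2)},$$
so combined with $\|u\|_{L^2}^2\le CC_0$ and a uniform bound $\|u\|_{L^q}\le C(L,C_0)$ this yields $\|u\|_{L^6}^6\le C_L C_0^{\theta_q}$ with $\theta_q=(q-6)/(q-2)$, and likewise for $B$.

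To derive the $L^q$ inequality, I would multiply \eqref{MHD2} by $|u|^{q-2}u$ and \eqref{MHD3} by $|B|^{q-2}B$, integrate, and use the mass equation \eqref{MHD1} to rewrite the material-derivative term as $\int(\rho u^j_t+\rho u\cdot\nabla u^j)|u|^{q-2}u^j\,dx=\tfrac{1}{q}\frac{d}{dt}\int\rho|u|^q\,dx$. Integration by parts on the pressure, magnetic, and viscous terms, together with the positivity $\int\partial_k u^j\partial_k(|u|^{q-2}u^j)\,dx\ge\int|u|^{q-2}|\nabla u|^2\,dx$ (and its analogue for $B$) and the hypothesis $\mu>4\lambda$, produces
\begin{align*}
\tfrac{d}{dt}\!\int\rho|u|^q\,dx + c_q\!\int |u|^{q-2}|\nabla u|^2\,dx &\le C_q\!\int(|\rho-\tilde\rho|+|B|^2)|u|^{q-2}|\nabla u|\,dx,\\
\tfrac{d}{dt}\!\int|B|^q\,dx + c_q\!\int |B|^{q-2}|\nabla B|^2\,dx &\le C_q\!\int |\nabla u|\,|B|^q\,dx.
\end{align*}
The right sides are then controlled by Hölder and Young's inequality (absorbing part of the dissipation), the Sobolev inequality $\|u\|_{L^{3q}}^q\le C\int |u|^{q-2}|\nabla u|^2\,dx$ obtained from $\||u|^{q/2}\|_{L^6}\le C\|\nabla|u|^{q/2}\|_{L^2}$, interpolation of intermediate norms such as $\|u\|_{L^{2(q-2)}}$ and $\|B\|_{L^{2q}}$ between $L^q$ and $L^{3q}$, the pointwise bound on $\rho$, and the $L^2$ bounds $\|\rho-\tilde\rho\|_{L^2}^2\le CC_0$ and $\int_0^T(\|\nabla u\|_{L^2}^2+\|\nabla B\|_{L^2}^2)\,d\tau\le CC_0$ from Lemma~\ref{L2 estimate}. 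The aim is a Grönwall inequality
$$\tfrac{d}{dt}\bigl(\|u\|_{L^q}^q+\|B\|_{L^q}^q\bigr) \le C_q\varphi(t)\bigl(\|u\|_{L^q}^q+\|B\|_{L^q}^q\bigr)+C_q C_0^{\eta}$$
with $\int_0^T\varphi\,d\tau$ controlled by Lemma~\ref{L2 estimate}; combined with $\|u_0\|_{L^q}^q+\|B_0\|_{L^q}^q\le 2L^q$, Grönwall delivers $\sup_{[0,T]}(\|u\|_{L^q}^q+\|B\|_{L^q}^q)\le C(L,C_0)$.

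The main obstacle is producing an integrable coefficient $\varphi$, specifically when handling the cross term $\int|\nabla u|\,|B|^q\,dx$ on the $B$-side. The naive split $\|\nabla u\|_{L^2}\,\|B\|_{L^{2q}}^q$ followed by the Gagliardo-Nirenberg bound $\|B\|_{L^{2q}}^q\le C\|B\|_{L^q}^{q/4}\|\nabla|B|^{q/2}\|_{L^2}^{3/2}$ and Young produces the coefficient $\|\nabla u\|_{L^2}^4$, which is not automatically $L^1_t$ from the basic $L^2$ estimate alone. The way out is either a more delicate split, e.g. $\|\nabla u\|_{L^3}\,\|B\|_{L^{3q/2}}^q$ with $\|B\|_{L^{3q/2}}^q\le C\|B\|_{L^q}^{q/2}\|\nabla|B|^{q/2}\|_{L^2}$ (reducing the cross coefficient to $\|\nabla u\|_{L^3}^2$, which can be traded against the $u$-side dissipation $\int |u|^{q-2}|\nabla u|^2$ via interpolation), or a continuity/bootstrap argument using smallness of $C_0$ and $T\le 1$. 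The key is to keep the exponents of $\|u\|_{L^q}$, $\|B\|_{L^q}$ on the right no worse than linear, so that the smallness of $C_0$ forces closure of Grönwall starting from the merely bounded (not small) initial $L^q$-norm $L$.
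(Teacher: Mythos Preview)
Your route differs from the paper's and runs into a structural obstacle tied to the viscosity hypothesis. The paper does \emph{not} propagate the $L^q$ norm and then interpolate; it works directly at the $L^6$ level. One multiplies \eqref{MHD2} by $6|u|^4u$ and \eqref{MHD3} by $6|B|^4B$, integrates, and obtains an energy identity in which the dangerous cross term from $\lambda\nabla\divv u$ produces a coefficient $(-24\lambda+6\mu)$ in front of a signed integral; the assumption $\mu/\lambda>4$ in \eqref{condition on vis} is exactly what makes this nonnegative. The $L^q$ hypothesis \eqref{boundedness on Lq initial data} enters only once, to bound the \emph{initial} value $\int(|u_0|^6+|B_0|^6)dx$ by interpolation between $\|u_0\|_{L^2}^2\le C_0$ and $\|u_0\|_{L^q}\le L$, which is precisely where the factor $C_LC_0^{\theta_q}$ comes from. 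A Gronwall argument then closes.

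Your plan of testing with $|u|^{q-2}u$ for the given $q>6$ would require the analogous coercivity at exponent $q$, but the coefficient in front of the bad $\lambda$-term scales with $q$: the sign condition becomes (schematically) $\mu>c(q)\lambda$ with $c(6)=4$ and $c$ increasing. Since the paper only assumes $\mu>4\lambda$, you cannot in general absorb the $\lambda\nabla\divv u$ contribution at level $q>6$, and the $L^q$ energy inequality you write down need not close. This is independent of the cross-term difficulty you already flagged on the $B$-side. The fix is simply to run the argument at $p=6$ as the paper does; then both the viscosity coercivity and the magnetic cross terms are manageable, and the $L^q$ initial bound is used only statically at $t=0$.
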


\begin{proof}
We follow the computations given in \cite{suenhoff12} and obtain, for $0\le t\le T\le1$,
\begin{align*}
&\intox(|u|^6+|B|^6)dx\Big|_{\tau=0}^t+6\intoxt(\mu|u|^2|\nabla u|^2+\nu|B|^2|\nabla B|^2)dxd\tau\\
&\qquad+(-24\lambda+6\mu)\intoxt|u|^2|\nabla(|u|^2)|^2dxd\tau+6\nu\intoxt|B|^2|\nabla)|B|^2)|^2dxd\tau\\
&\le C\Big[\intoxt|\rho-\tilde\rho||\divv(|u|^4u)|dxd\tau+\intoxt|u|^4|u||\divv(BB^T)|dxd\tau\Big]\\
&\qquad+C\Big[\intoxt|u|^4|u||\nabla(\frac{1}{2}|B|^2)|dxd\tau+\intoxt|B|^4|B\cdot\divv(Bu^T-uB^T)|dxd\tau\Big].
\end{align*}
By the assumption \eqref{condition on vis}, the term involving $(-24\lambda+6\mu)$ is positive, while the term $C\intox(|u_0|^6+|B_0|^6)dx$ can be bounded in terms of $C_0$ and $L$ by interpolation and assumption \eqref{boundedness on Lq initial data}. The rest of the analysis follows by a Gronw\"{a}ll-type argument (also see \cite{suenhoff12} for details) and we omit the details here. 
\end{proof}

Next we derive bounds for $u$ and $B$ in $L^\infty([0,T];H^1(\R^3))$ when $T\le1$.

\begin{lemma}\label{H1 estimates}
Assume that $\rho$ satisfies \eqref{pointwise bound on rho finite time} and $C_0\ll1$. For $T\le 1$ and $s\in[0,1]$, we have
\begin{align}\label{H1 bound}
\sup_{0\le \tau\le T}\tau^{1-s}\intox(|\nabla u|^2+|\nabla B|^2)dx+\intox \tau^{1-s}(|\dot{u}|^2+|B_t|^2)dx\le CC_0.
\end{align}
\end{lemma}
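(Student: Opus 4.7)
The plan is to derive a differential inequality for the Dirichlet energies $\|\nabla u\|_{L^2}^2$ and $\|\nabla B\|_{L^2}^2$, multiply by the weight $\tau^{1-s}$, integrate, and absorb the boundary and singular contributions using the initial $H^s$-regularity of $(u_0,B_0)$. First I would multiply the momentum equation \eqref{MHD2} by $\dot u^j$ and the induction equation \eqref{MHD3} by $B^j_t$, sum over $j$, and integrate over $\R^3$. Integration by parts on the viscosity terms, together with the mass equation and the standard manipulation absorbing $\int P\,\divv u_t\,dx$ as a time derivative of a pressure potential $G(\rho)$, produces
\begin{align*}
&\frac{d}{dt}\intox\Big(\tfrac{\mu}{2}|\nabla u|^2+\tfrac{\lambda}{2}(\divv u)^2+\tfrac{\nu}{2}|\nabla B|^2+G(\rho)\Big)dx+\intox\big(\rho|\dot u|^2+|B_t|^2\big)dx \\
&\qquad =\mathcal R(t),
\end{align*}
where $\mathcal R$ gathers the cubic velocity terms $\int \partial_iu^j\partial_ju^k\partial_ku^i\,dx$, the coupling $\int(\tfrac12\nabla|B|^2-\divv(B\otimes B))\cdot\dot u\,dx$, and the magnetic convective terms $\int\divv(B^ju-u^jB)\cdot B_t\,dx$. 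Each is estimated via H\"older, the Gagliardo--Nirenberg inequality \eqref{Lr bound general}, and Lemmas \ref{estimate on effective viscous flux}--\ref{estimate on nabla u lemma}, producing contributions that either absorb into $\tfrac12\intox(\rho|\dot u|^2+|B_t|^2)dx$ on the left or take the form $C\Phi(\tau)\intox(|\nabla u|^2+|\nabla B|^2)dx$ with $\Phi$ bounded by a power of $C_0$ through \eqref{L2 bound}--\eqref{L6 bound}.

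Then I multiply the resulting inequality by $\tau^{1-s}$ and integrate over $[0,t]$ with $t\le T\le 1$. Using $\partial_\tau\tau^{1-s}=(1-s)\tau^{-s}$, the outcome is, schematically,
\begin{align*}
&\tau^{1-s}\intox(|\nabla u|^2+|\nabla B|^2)(\cdot,\tau)dx\Big|_0^t+\int_0^t\!\!\tau^{1-s}\!\!\intox\big(\rho|\dot u|^2+|B_t|^2\big)dxd\tau \\
&\le (1-s)\!\int_0^t\!\tau^{-s}\!\!\intox(|\nabla u|^2+|\nabla B|^2)dxd\tau+C\!\int_0^t\!\tau^{1-s}\Phi(\tau)\!\intox(|\nabla u|^2+|\nabla B|^2)dxd\tau.
\end{align*}
The boundary contribution at $\tau=0$ vanishes when $s<1$, because the weight does; for $s=1$ it reduces to $\|\nabla u_0\|_{L^2}^2+\|\nabla B_0\|_{L^2}^2\le CC_0$ by \eqref{definition of C0}.

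The crucial step is to bound the time-singular term $(1-s)\!\int_0^t\!\tau^{-s}\|(\nabla u,\nabla B)\|_{L^2}^2d\tau$ by $CC_0$ for $s\in[0,1)$. I would split $u=u_L+v$, where $u_L$ is the semigroup solution of the linear Lam\'e system $(u_L)_t=\tilde\rho^{-1}(\mu\Delta u_L+\lambda\nabla\divv u_L)$ with data $u_0$, and analogously $B=B_L+w$ for the heat semigroup at rate $\nu/\tilde\rho$. By Plancherel and the elementary identity $\int_0^\infty\sigma^{-s}e^{-2c\sigma|\xi|^2}d\sigma=C(s)|\xi|^{2(s-1)}$, which is finite precisely for $s\in[0,1)$,
\[
\int_0^t\tau^{-s}\|\nabla u_L\|_{L^2}^2d\tau\le C\intox|\xi|^{2s}|\hat u_0(\xi)|^2d\xi\le C\|u_0\|_{H^s}^2\le CC_0,
\]
and the same argument handles $B_L$. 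The remainder $(v,w)$ solves a forced parabolic system whose source---$\nabla P$, $\rho u\cdot\nabla u$, the magnetic couplings, and the commutator of $\rho^{-1}-\tilde\rho^{-1}$ with the viscous operator---is of size $CC_0^\theta$ in the relevant spacetime norm by \eqref{L2 bound}--\eqref{L6 bound}, so its contribution to $\int_0^t\tau^{-s}\|\nabla v\|_{L^2}^2d\tau$ is $CC_0^{1+\theta}$ and is absorbable when $C_0\ll 1$. A Gr\"onwall inequality then closes \eqref{H1 bound}. The principal obstacle is this controlled transfer of the singular $\tau^{-s}$ weight onto $\|(u_0,B_0)\|_{H^s}^2$; once achieved, the MHD-specific couplings introduce no further difficulty thanks to Lemmas \ref{estimate on effective viscous flux}--\ref{estimate on nabla u lemma} and the $L^6$ bound \eqref{L6 bound}.
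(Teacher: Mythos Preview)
Your strategy is close in spirit to the paper's but diverges at a technically decisive point. The paper, following Hoff \cite{hoff02}, splits $u=w_1+w_2$ where both $w_i$ solve equations with the \emph{actual} density $\rho$ and transport velocity $u$ as coefficients: $w_1$ carries the initial data with no pressure forcing, $w_2$ carries $-\nabla P$ with zero initial data. It then proves the two endpoint estimates ($k=0$: no weight, data in $H^1$; $k=1$: weight $\tau$, data in $L^2$), absorbs the cubic terms $|\nabla w_i|^3$, $|\nabla B|^3$ via the $F$--$\omega$ decomposition and smallness of $C_0$, and interpolates linearly between the endpoints to obtain the $\tau^{1-s}$/$H^s$ bound \eqref{Hs bound on w1 and B}. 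Because $\rho$ already sits inside the linearized operator, no density commutator ever appears.

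Your decomposition $u=u_L+v$ instead takes $u_L$ to be the \emph{constant-coefficient} Lam\'e semigroup at density $\tilde\rho$. The Plancherel computation for $u_L$ is correct and is an explicit realization of the interpolation step. The gap is in the remainder: the forcing for $v$ then contains the commutator $(\rho^{-1}-\tilde\rho^{-1})(\mu\Delta u_L+\lambda\nabla\divv u_L)$, and for $u_0\in H^s$ with $s<1$ semigroup smoothing gives only $\|\Delta u_L(\cdot,\tau)\|_{L^2}\sim\tau^{(s-2)/2}\|u_0\|_{H^s}$, so that $\int_0^\tau\|\Delta u_L\|_{L^2}^2\,ds$ diverges at the origin. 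Since $\|\rho-\tilde\rho\|_{L^\infty}$ is merely bounded, not small, under the hypotheses (only $\|\rho-\tilde\rho\|_{L^2}^2\le C_0$ is small), this term cannot be absorbed, and your claim that the source ``is of size $CC_0^\theta$ in the relevant spacetime norm'' fails for this piece; hence the estimate for $\int_0^t\tau^{-s}\|\nabla v\|_{L^2}^2d\tau$ does not close. The remedy is exactly the paper's choice: keep $\rho$ inside the linearized operator so that no commutator is generated, derive the clean endpoint bounds \eqref{H1 bound on w1 and B smooth}--\eqref{H1 bound on w1 and B not smooth}, and interpolate.
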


\begin{proof}
We apply the interpolation argument as given in Hoff \cite{hoff02}. We define differential operators $\mathcal{L}_u$, $\mathcal{L}_B$ acting on functions $w:\R^3\times[0,\infty)\rightarrow\R^3$ by
\begin{align*}
(\mathcal{L}_u w)^j&=(\rho w^j)_t+\divv(\rho w^j u)+({\textstyle\frac{1}{2}}|B|^2)_{x_j}-\divv (B^j B)-(\mu \Delta u^j +
\lambda \, \divv \,u_{x_j}),\\
(\mathcal{L}_B w)^j&=w^{j}_{t} + \divv (w^j u - u^j w)-\nu\Delta w^j.
\end{align*}
Then we define $w_1$ and $w_2$ by
\begin{align*}
\mathcal{L}_u w_1=0,\,\,\,w_1(x,0)=w_{10}(x),\qquad\mathcal{L}_u w_2=-\nabla P(\rho),\,\,\,w_2(x,0)=0,
\end{align*}
for a given $w_{10}$. Notice that $\mathcal{L}_B B=0$, and if $w_{10}=u_0$, then $w_1+w_2=u$. Using the energy estimate as obtained in Lemma~\ref{L2 estimate}, we have
\begin{align}\label{L2 estimate on w1}
&\sup_{0\le \tau\le T}\intox(|w_1(x,t)|^2+|B|^2)dx+\intoxt(|\nabla w_1|^2+|\nabla B|^2)dxd\tau\notag\\
&\qquad\le C\intox(|w_{10}|^2+|B_0|^2)dx,
\end{align}
as well as
\begin{align}\label{L2 estimate on w2}
&\sup_{0\le \tau\le T}\intox(|w_2(x,\tau)|^2+|B|^2)dx+\intoxt(|\nabla w_2|^2+|\nabla B|^2)dxd\tau\notag\\
&\qquad\le C\intox|B_0|^2dx+CT\sup_{0\le \tau\le T}\|(P-\tilde P)(\cdot,\tau)\|_{L^2}^2,
\end{align}

Also, for $k\in\{0,1\}$ and $0\le t\le T$, we have
\begin{align}\label{H1 estimate on w1 and B}
&\tau^k\intox(|\nabla w_1(x,\tau)|^2+|\nabla B|^2)dx\Big|_{\tau=0}^{\tau=t}+\intoxt\tau^k(\rho|\dot{w_1}|^2+|B_t|^2)dxd\tau\notag\\
&\le C\Big(\intoxt k\tau^{k-1}(|\nabla w_1|^2+|\nabla B|^2)dxd\tau+\intoxt\tau^{\frac{3k}{2}}(|\nabla w_1|^3+|\nabla B|^3)dxd\tau\notag\\
&\qquad+C\intoxt \tau^k(|\nabla B|^2|B|^2+|\nabla B|^2|u|^2+|\nabla u|^2|B|^2)dxd\tau,
\end{align}
and
\begin{align}\label{H1 estimate on w2}
&\intox|\nabla w_2(x,\tau)|^2dx\Big|_{\tau=0}^{\tau=t}+\intoxt\rho|\dot{w_2}|^2dxd\tau\notag\\
&\le C\Big(\Big|\intox(P-\tilde P)\divv (w_2)(x,\tau)dx\Big|_{\tau=0}^{\tau=t}\Big|\Big)\notag\\
&\qquad+C\Big(\intoxt\tau^{\frac{3k}{2}}(|\nabla w_2|^3+|\nabla B|^3)dxd\tau+\intoxt \tau^k|\nabla B|^2|B|^2dxd\tau\Big).
\end{align}
Using \eqref{L2 estimate on w1}-\eqref{L2 estimate on w2}, the terms 
$$\intoxt k\tau^{k-1}(|\nabla w_1|^2+|\nabla B|^2)dxd\tau$$
and
$$\Big|\intox(P-\tilde P)\divv (w_2)(x,\tau)dx\Big|_{\tau=0}^{\tau=t}\Big|$$
can be bounded in terms of $C_0$, namely
\begin{align*}
\Big|\intoxt k\tau^{k-1}(|\nabla w_1|^2+|\nabla B|^2)dxd\tau\Big|\le C\intoxt (|\nabla w_1|^2+|\nabla B|^2)dxd\tau\le CC_0,
\end{align*}
and 
\begin{align*}
&\Big|\intox(P-\tilde P)\divv (w_2)(x,\tau)dx\Big|_{\tau=0}^{\tau=t}\Big|\\
&\le C\Big(\intox|\rho-\tilde\rho|^2(x,t)dx\Big)^\frac{1}{2}\Big(\intox|\nabla w_2|^2(x,t)dx\Big)^\frac{1}{2}\le CC_0^\frac{1}{2}\Big(\intox|\nabla w_2|^2(x,t)dx\Big)^\frac{1}{2} .
\end{align*}
We now aim at controlling the higher terms as appeared on the right sides of \eqref{H1 estimate on w1 and B} and \eqref{H1 estimate on w2}. In view of the bound \eqref{L6 bound}, it suffices to consider \[\intoxt\tau^{\frac{3k}{2}}(|\nabla w_i|^3+|\nabla u|^3+|\nabla B|^3)dxd\tau\] for $i=1,2$. For the term involving $\nabla B$, using \eqref{Lr bound general}, we can estimate it as follows.
\begin{align*}
&\intoxtt \tau^{\frac{3k}{2}}|\nabla B|^3dxd\tau\\
&\le C\intoxtt \tau^{\frac{3k}{2}}\Big(\intox|\Delta B|^2\Big)^\frac{3}{4}\Big(\intox|\nabla B|^2\Big)^\frac{3}{4}dxd\tau\\ 
&\le C\int_0^t\tau^{\frac{3k}{2}}\Big(\intox(|B_t|^2+|\nabla B|^2|u|^2+|\nabla u|^2|B|^2)\Big)^\frac{3}{4}\Big(\intox|\nabla B|^2\Big)^\frac{3}{4}\\
&\le C\Big(\sup_{0\le \tau\le T}\tau^{k}\intox|\nabla B|^2dx\Big)^\frac{1}{2}\Big(\intoxtt \tau^{k}|B_t|^2dx\Big)^\frac{3}{4}\Big(\intoxtt |\nabla B|^2dx\Big)^\frac{1}{4}d\tau\\
&\qquad+C\int_0^t\tau^{\frac{3k}{2}}\Big(\intox|\nabla B|^3dx\Big)^\frac{1}{2}\Big(\intox|u|^6dx\Big)^\frac{1}{4}\Big(\intox|\nabla B|^2dx\Big)^\frac{3}{4}d\tau\\
&\qquad+C\int_0^t\tau^{\frac{3k}{2}}\Big(\intox|\nabla u|^3dx\Big)^\frac{1}{2}\Big(\intox|B|^6dx\Big)^\frac{1}{4}\Big(\intox|\nabla B|^2dx\Big)^\frac{3}{4}d\tau.
\end{align*}
Therefore, by the bounds \eqref{L2 bound} and \eqref{L6 bound}, we obtain
\begin{align*}
&\intoxtt \tau^{\frac{3k}{2}}|\nabla B|^3dxd\tau\\
&\le CC_0^\frac{1}{4}\Big(\sup_{0\le \tau\le T}\tau^{k}\intox|\nabla B|^2dx\Big)^\frac{1}{2}\Big(\intoxtt \tau^{k}|B_t|^2dxd\tau\Big)^\frac{3}{4}\\
&\qquad+CC_0^\frac{1}{4}C_0^\frac{1}{2}\Big(\sup_{0\le \tau\le T}\tau^{k}\intox|\nabla B|^2dx\Big)^\frac{1}{4}\Big(\intoxtt \tau^{\frac{3k}{2}}|\nabla B|^3dxd\tau\Big)^\frac{1}{2}\\
&\qquad+CC_0^\frac{1}{4}C_0^\frac{1}{2}\Big(\sup_{0\le \tau\le T}\tau^{k}\intox|\nabla B|^2dx\Big)^\frac{1}{4}\Big(\intoxtt \tau^{\frac{3k}{2}}|\nabla u|^3dxd\tau\Big)^\frac{1}{2}.
\end{align*}
To estimate the term $\intoxt t^\frac{3k}{2}|\nabla u|^3$, we apply \eqref{Lr bound general} and the bounds \eqref{bound on nabla F}-\eqref{bound on nabla u} to obtain
\begin{align*}
&\intoxtt \tau^{\frac{3k}{2}}|\nabla u|^3dxd\tau\\
&\le C\intoxtt \tau^{\frac{3k}{2}}(|F|^3+|\omega|^3+|P-\tilde P|^3)dxd\tau\\
&\le C\int_0^t \tau^{\frac{3k}{2}}\Big(\intox|F|^2dx\Big)^\frac{3}{4}\Big(\intox|\nabla F|^2dx\Big)^\frac{3}{4}d\tau+C\intoxtt \tau^{\frac{3k}{2}}(|\omega|^3+|P-\tilde P|^3)dxd\tau\\
&\le CC_0^\frac{1}{4}\Big(\sup_{0\le \tau\le T}\tau^{k}\intox|\nabla u|^2dx\Big)^\frac{1}{2}\Big(\intoxtt \tau^{k}|\dot{u}|^2dxd\tau\Big)^\frac{3}{4}+C_0^\frac{3}{4}\Big(\intoxtt \tau^{k}|\dot{u}|^2dxd\tau\Big)^\frac{3}{4}\\
&\qquad+CC_0^\frac{1}{4}C_0^\frac{1}{2}\Big(\sup_{0\le \tau\le T}\tau^{k}\intox|\nabla u|^2dx\Big)^\frac{1}{4}\Big(\intoxtt \tau^{\frac{3k}{2}}|\nabla B|^3dxd\tau\Big)^\frac{1}{2}\\
&\qquad+CC_0\Big(\intoxtt \tau^{\frac{3k}{2}}|\nabla B|^3dxd\tau\Big)^\frac{1}{2}+C\intoxtt \tau^{\frac{3k}{2}}(|\omega|^3+|P-\tilde P|^3)dxd\tau.
\end{align*}
The estimates for $w_1$ and $w_2$ are just similar. In view of the above, under suitable smallness assumption on the initial data, the integrals $\intoxt\tau^{\frac{3k}{2}}(|\nabla u|^3+|\nabla B|^3)$ can be absorbed by the left sides of \eqref{H1 estimate on w1 and B} and \eqref{H1 estimate on w2}. We treat $w_1$ and $w_2$ in a similar way and obtain the following estimates on $w_1$, $w_2$ and $B$:
\begin{align}
\sup_{0\le \tau\le T}\intox(|\nabla w_1|^2&+|\nabla B|^2)dx+\intoxt (|\dot{w_1}|^2+|B_t|^2)dxd\tau\notag\\
&\le C(\|w_{10}\|^2_{H^1}+\|B_0\|^2_{H^1}),\label{H1 bound on w1 and B smooth}\\
\sup_{0\le \tau\le T}t\intox(|\nabla w_1|^2&+|\nabla B|^2)dx+\intoxt \tau(|\dot{w_1}|^2+|B_t|^2)dxd\tau\notag\\
&\le C(\|w_{10}\|^2_{L^2}+\|B_0\|^2_{L^2}),\label{H1 bound on w1 and B not smooth}\\
\sup_{0\le \tau\le T}\intox|\nabla w_2|^2dx&+\intoxt|\dot{w_2}|^2dxd\tau\le CC_0.\label{H1 bound on w2}
\end{align}
Since the operators $\mathcal{L}_u$ and $\mathcal{L}_B$ are both linear, we can apply Riesz-Thorin interpolation to deduce from \eqref{H1 bound on w1 and B smooth}-\eqref{H1 bound on w1 and B not smooth} that
\begin{align}\label{Hs bound on w1 and B}
&\sup_{0\le t\le 1}\tau^{1-s}\intox(|\nabla w_1|^2+|\nabla B|^2)dx+\intoxt \tau^{1-s}(|\dot{w_1}|^2+|B_t|^2)dxd\tau\notag\\
\qquad&\le C(\|w_{10}\|^2_{H^s}+\|B_0\|^2_{H^s}).
\end{align}
By taking $w_{10}=u_0$, we conclude from \eqref{H1 bound on w2} and \eqref{Hs bound on w1 and B} that
\begin{align*}
&\sup_{0\le \tau\le T}\tau^{1-s}\intox(|\nabla u|^2+|\nabla B|^2)dx\\
&\qquad+\intoxt \tau^{1-s}(|\dot{u}|^2+|B_t|^2)dxd\tau\le CC_0.
\end{align*}
\end{proof}

Next we give the following auxiliary bounds which are useful in estimating some mixed terms in $u$ and $B$ when $T\le1$.

\begin{lemma}\label{auxiliary bound 1 lemma}
Assume that $\rho$ satisfies \eqref{pointwise bound on rho finite time} and $C_0\ll1$. For $T\le1$ and $s\in[0,1]$, we have
\begin{align}\label{auxiliary bound 1}
\intoxt \tau^{1-s}(|\nabla B|^2|B|^2+|\nabla u|^2|B|^2+|\nabla B|^2|u|^2)dxd\tau\le CC_0^{\theta},
\end{align}
for some $\theta>0$.
\end{lemma}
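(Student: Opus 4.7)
The plan is to split the integral into three pieces,
\begin{equation*}
A_1 = \intoxt \tau^{1-s}|B|^2|\nabla B|^2\,dxd\tau,\quad A_2 = \intoxt \tau^{1-s}|B|^2|\nabla u|^2\,dxd\tau,\quad A_3 = \intoxt \tau^{1-s}|u|^2|\nabla B|^2\,dxd\tau,
\end{equation*}
and to control each by a Hölder inequality of the form $\int|f|^2|g|^2\,dx \le \|f\|_{L^6}^2\|g\|_{L^3}^2$, where $f$ is the zeroth-order factor ($u$ or $B$) and $g$ is the gradient. The $L^6$ norm of $f$ is already small, of order $C_0^{\theta_q/3}$, by \eqref{L6 bound}. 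The $L^3$ norm of $g$ is interpolated via \eqref{Lr bound general} with $r=3$, giving $\|g\|_{L^3}^2 \le C\|g\|_{L^2}\|\nabla g\|_{L^2}$; the $\|g\|_{L^2}$ factor is controlled by the weighted bound \eqref{H1 bound}, so the remaining task is to estimate the second-order piece $\|\nabla g\|_{L^2}$ in a time-integrable way.

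For $\|\nabla^2 B\|_{L^2}$ I would rearrange \eqref{MHD3} into $\nu\Delta B = B_t + \divv(Bu-uB)$ and appeal to standard elliptic theory on $\R^3$ to obtain
\begin{equation*}
\|\nabla^2 B\|_{L^2} \le C\bigl(\|B_t\|_{L^2} + \|u\nabla B\|_{L^2} + \|B\nabla u\|_{L^2}\bigr).
\end{equation*}
For $\|\nabla u\|_{L^3}$, since no direct $H^2$-bound on $u$ is available, I would instead use the flux decomposition \eqref{bound on nabla u} together with a further interpolation $\|h\|_{L^3}^2 \le C\|h\|_{L^2}\|\nabla h\|_{L^2}$ applied to $h\in\{F,\omega\}$, combined with \eqref{bound on F}, \eqref{bound on nabla F}, and the analogous bound on $\omega$ from Lemma~\ref{estimate on nabla u lemma}. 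This expresses $\|\nabla u\|_{L^3}^2$ through $\|\nabla u\|_{L^2}$, $\|\rho-\tilde\rho\|_{L^2}$, $\|\dot u\|_{L^2}$, $\|B\nabla B\|_{L^2}$, and $\|P-\tilde P\|_{L^3}$, each of which has an acceptable weighted-in-time bound supplied by Lemmas~\ref{L2 estimate} and~\ref{H1 estimates}.

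Assembling the pieces and applying Cauchy-Schwarz in time followed by Young's inequality gives, for suitable exponents $\theta_i>0$, inequalities of the schematic form
\begin{equation*}
A_i \le C_\epsilon C_0^{\theta_i} + \epsilon(A_1+A_2+A_3) + C\,C_0^{\theta_q/3}(A_1+A_2+A_3),\qquad i=1,2,3,
\end{equation*}
the last term arising because $\|u\nabla B\|_{L^2}^2$, $\|B\nabla u\|_{L^2}^2$, and $\|B\nabla B\|_{L^2}^2$ reappear on the right through the $\nabla^2 B$ identity and the flux derivatives. Fixing $\epsilon$ small and then shrinking $C_0$ so that $C\,C_0^{\theta_q/3}<1/4$ would allow us to sum over $i$ and absorb $A_1+A_2+A_3$ into the left side, producing \eqref{auxiliary bound 1} with $\theta=\min_i\theta_i>0$. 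The main obstacle is precisely this three-way coupling: each $A_i$ reproduces the others (and itself) in its upper bound, and the estimate can only be closed by treating all three simultaneously and exploiting the smallness of $C_0$ from \eqref{L6 bound}.
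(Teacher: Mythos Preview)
Your proposal is correct and follows essentially the same route as the paper's own proof: the same $L^6\times L^3$ H\"older splitting, the same Gagliardo--Nirenberg interpolation $\|g\|_{L^3}^2\le C\|g\|_{L^2}\|\nabla g\|_{L^2}$, the same use of the $B$-equation to replace $\|\Delta B\|_{L^2}$ by $\|B_t\|_{L^2}+\|u\nabla B\|_{L^2}+\|B\nabla u\|_{L^2}$, and the same appeal to the flux decomposition \eqref{bound on nabla u}--\eqref{bound on omega} for $\|\nabla u\|_{L^3}$. The paper closes the resulting coupled system in the multiplicative form $A_1+A_2+A_3\le CC_0^{5/6}\bigl(C_0+A_1+A_2+A_3\bigr)^{1/2}$ rather than your linearised Young-type form, but the two are equivalent and both rely on exactly the smallness mechanism you identify.
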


\begin{proof}
Using equation \eqref{MHD3} and the bounds \eqref{L2 bound}-\eqref{L6 bound} and \eqref{H1 bound}, we have
\begin{align*}
&\int_0^T\tau^{1-s}\intox|\nabla B|^2|B|^2dxd\tau\\
&\le\int_0^T\tau^{1-s}\Big(\intox|\nabla B|^3dx\Big)^\frac{2}{3}\Big(\intox|B|^6dx\Big)^\frac{1}{3}d\tau\\
&\le C\int_0^T\tau^{1-s}\Big(\intox|\nabla B|^2dx\Big)^\frac{1}{2}\Big(\intox|\Delta B|^2dx\Big)^\frac{1}{2}\Big(\intox|B|^6dx\Big)^\frac{1}{3}d\tau\\
&\le C\int_0^T\tau^{1-s}\Big(\intox|\nabla B|^2dx\Big)^\frac{1}{2}\Big(\intox(|B_t|^2+|\nabla u|^2|B|^2+|\nabla B|^2|u|^2)dx\Big)^\frac{1}{2}\Big(\intox|B|^6dx\Big)^\frac{1}{3}d\tau\\
&\le CC_0^\frac{1}{3}C_0^\frac{1}{2}\Big(C_0+\intoxt\tau^{1-s}(|\nabla u|^2|B|^2+|\nabla B|^2|u|^2)dxd\tau\Big)^\frac{1}{2}.
\end{align*}
Similarly, we have
\begin{align*}
&\int_0^T\tau^{1-s}\intox|\nabla B|^2|u|^2dxd\tau\\
&\le CC_0^\frac{1}{3}C_0^\frac{1}{2}\Big(C_0+\intoxt\tau^{1-s}(|\nabla u|^2|B|^2+|\nabla B|^2|u|^2)dxd\tau\Big)^\frac{1}{2}.
\end{align*}
Finally, for the term $\int_0^T\intox \tau^{1-s}|\nabla u|^2|B|^2$, using the bound \eqref{bound on nabla u} on $\nabla u$, 
\begin{align}\label{bound on nabla u B}
&\int_0^T\intox \tau^{1-s}|\nabla u|^2|B|^2dxd\tau\notag\\
&\le C\int_0^T\tau^{1-s}\Big(\intox|\nabla u|^3dx\Big)^\frac{2}{3}\Big(\intox|B|^6dx\Big)^\frac{1}{3}d\tau\notag\\
&\le CC_0^\frac{1}{3}\int_0^T\tau^{1-s}\Big(\intox(|F|^3+|\omega|^3+|\rho-\tilde\rho|^3)dx\Big)^\frac{2}{3}d\tau.
\end{align}
To bound the term involving $F$ in \eqref{bound on nabla u B}, using the estimates \eqref{bound on F}-\eqref{bound on nabla F}, we have
\begin{align*}
&\int_0^T\Big(\intox|F|^3dx\Big)^\frac{2}{3}d\tau\\
&\le C\int_0^T\tau^{1-s}\Big(\intox|F|^2dx\Big)^\frac{1}{2}\Big(\intox|\nabla F|^2dx\Big)^\frac{1}{2}d\tau\\
&\le C\int_0^T\tau^{1-s}\Big(\intox(|\nabla u|^2+|P-\tilde P|^2)dx\Big)^\frac{1}{2}\Big(\intox(|\dot{u}|^2+|B|^2|\nabla B|^2)dx\Big)^\frac{1}{2}d\tau\\
&\le CC_0^\frac{1}{2}\Big(C_0+\int_0^T\tau^{1-s}\intox|B|^2|\nabla B|^2dxd\tau\Big)^\frac{1}{2}.
\end{align*}
The other terms on the right side of \eqref{bound on nabla u B} can be treated in a similar way, so we conclude that
\begin{align*}
&\intoxt \tau^{1-s}(|\nabla B|^2|B|^2+|\nabla u|^2|B|^2+|\nabla B|^2|u|^2)dxd\tau\\
&\qquad\le CC_0^\frac{1}{3}C_0^\frac{1}{2}\Big(C_0+\intoxt\tau^{1-s}(|\nabla B|^2|B|^2+|\nabla u|^2|B|^2+|\nabla B|^2|u|^2)dxd\tau\Big)^\frac{1}{2}
\end{align*}
and \eqref{auxiliary bound 1} follows.
\end{proof}

We now derive preliminary bounds for $\dot u$ and $B_t$ in $L^\infty([0,T];L^2(\R^3))$ when $T\le1$. We point out that here we require $s\in(\frac{1}{2},1]$ on the time layer factor $\tau^{2-s}$ due to the lack of integrability in time near $t=0$ for $\dot u$ and $B_t$.

\begin{lemma}\label{H2 estimates}
Assume that $\rho$ satisfies \eqref{pointwise bound on rho finite time} and $C_0\ll1$. For $T\le1$ and $s\in(\frac{1}{2},1]$, we have
\begin{align}\label{H2 bound}
\sup_{0\le \tau\le T}\tau^{2-s}\intox(|\dot{u}|^2+|B_t|^2)dx+\intoxt \tau^{2-s}(|\nabla\dot{u}|^2+|\nabla B_t|^2)dxd\tau\le CC_0^{\theta},
\end{align}
for some $\theta>0$.
\end{lemma}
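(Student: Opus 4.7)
The plan is standard: apply the operator $\partial_t+\divv(u\,\cdot)$ to the momentum equation \eqref{MHD2} and test the result with $\sigma^{2-s}\dot u$; in parallel, differentiate the magnetic equation \eqref{MHD3} in $t$ and test with $\sigma^{2-s} B_t$. Using the continuity equation \eqref{MHD1}, the two left-hand sides combine to give
\begin{equation*}
\tfrac12\tfrac{d}{d\tau}\intox\sigma^{2-s}(\rho|\dot u|^2+|B_t|^2)\,dx \;-\; \tfrac{2-s}{2}\sigma^{1-s}\sigma'\intox(\rho|\dot u|^2+|B_t|^2)\,dx,
\end{equation*}
so that integration in $\tau\in[0,t]$ produces the target supremum quantities on the left, with the weight-derivative error dominated by $\intoxtt\sigma^{1-s}(|\dot u|^2+|B_t|^2)\,dx\,d\tau$, already controlled by \eqref{H1 bound}. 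The viscous parts of the right-hand sides, after integration by parts, yield the desired dissipation $\mu\intoxtt\sigma^{2-s}|\nabla\dot u|^2\,dx\,d\tau + \nu\intoxtt\sigma^{2-s}|\nabla B_t|^2\,dx\,d\tau$.

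The remainder of the right-hand side consists of cubic forms in $\nabla u,\nabla B$ paired with $\dot u$ or $B_t$, plus quadratic forms in $B$ coupled to a single gradient. They arise from commuting the material derivative past $\mu\Delta u+\lambda\nabla\divv u$, from differentiating $P(\rho)_{x_j}$ via \eqref{MHD1} (giving a term $P'(\rho)\rho\,\divv u\,\divv\dot u$), from differentiating the Lorentz force $\divv(B\otimes B)-\nabla(\tfrac12|B|^2)$, and from the analogous $\partial_t(B^j u-u^j B)\cdot\nabla B_t$ appearing in the $B_t$-equation. The uniform strategy is to estimate each cubic integrand by H\"older in the scheme $L^2\!\times\!L^3\!\times\!L^6$, use \eqref{Lr bound general} to interpolate the $L^3$ factor between $L^2$ and $H^1$, and then invoke the effective viscous flux bounds \eqref{bound on F}--\eqref{bound on omega} to trade $\|\nabla u\|_{L^p}$ for $\|\dot u\|_{L^p}$ and $\|B\nabla B\|_{L^p}$; the resulting dissipative factors $\|\nabla\dot u\|_{L^2}$, $\|\nabla B_t\|_{L^2}$ are absorbed on the left via Cauchy--Schwarz under the smallness of $C_0$.

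The main obstacle is the genuinely mixed quadratic terms $|B|^2|\nabla B|^2$, $|\nabla u|^2|B|^2$ and $|\nabla B|^2|u|^2$, which cannot be closed by the viscous dissipation alone. These are precisely the quantities controlled in Lemma~\ref{auxiliary bound 1 lemma} with weight $\sigma^{1-s}$; since $\sigma\le 1$ on $[0,1]$ one has $\sigma^{2-s}\le\sigma^{1-s}$, so \eqref{auxiliary bound 1} applies \emph{a fortiori} with the stronger weight and gives a bound of size $CC_0^\theta$. The hypothesis $s>\tfrac12$ enters in ensuring time-integrability near $\tau=0$ of interpolation quantities such as $\sigma^{1-s}\|\nabla u\|_{L^2}^3$, which one splits as $\sigma^{(1-s)/2}\|\nabla u\|_{L^2}\cdot\sigma^{(1-s)/2}\|\nabla u\|_{L^2}^2$ using \eqref{H1 bound} and Lemma~\ref{L6 estimate}; the restriction $s>\tfrac12$ is exactly what keeps the residual weight integrable at the origin. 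Assembling the pieces, moving all dissipative contributions to the left, and closing by Gr\"onwall under the smallness of $C_0$ yields \eqref{H2 bound} with some explicit $\theta>0$ determined by the interpolation exponents.
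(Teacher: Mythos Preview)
Your overall strategy---applying $\partial_t+\divv(u\,\cdot)$ to \eqref{MHD2}, differentiating \eqref{MHD3} in time, testing with $\sigma^{2-s}\dot u$ and $\sigma^{2-s}B_t$, and absorbing the dissipation---matches the paper's starting point; the paper cites \cite{suenhoff12} to reach the preliminary inequality \eqref{H2 bound step 1}. From there, however, the paper organizes the closure differently from your sketch, and your proposal glosses over a step that actually requires new work.

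The paper does not estimate the cubic commutator terms directly in an $L^2\times L^3\times L^6$ scheme. It applies Young's inequality first, leaving the quartic gradient integral $\intoxt\tau^{2-s}(|\nabla u|^4+|\nabla B|^4)$ and several mixed quantities on the right of \eqref{H2 bound step 1}, and then introduces three coupled functionals
\[
\mathcal{A}_1=\text{target},\qquad \mathcal{A}_2=\intoxt\tau^{2-s}(|\nabla u|^4+|\nabla B|^4),\qquad \mathcal{A}_3=\sup_{0\le\tau\le T}\tau^{2-s}\intox(|\nabla B|^2|B|^2+|\nabla u|^2|B|^2+|\nabla B|^2|u|^2),
\]
closing the system $\mathcal{A}_i\le C(C_0^{\theta}+\sum_j C_0^{\theta_j}\mathcal{A}_j^{\gamma_j})$. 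The crucial point is that $\mathcal{A}_3$ is a \emph{pointwise-in-time supremum}, not a time integral; Lemma~\ref{auxiliary bound 1 lemma}, which you invoke via $\sigma^{2-s}\le\sigma^{1-s}$, gives only the time integral and does \emph{not} supply $\mathcal{A}_3$. The paper devotes a separate argument to $\mathcal{A}_3$, controlling $\|B(\cdot,\tau)\|_{L^\infty}$ and $\|u(\cdot,\tau)\|_{L^\infty}$ through $W^{1,r}\hookrightarrow L^\infty$ with the specific exponent $r=6/(3-2s)\in(3,6]$. This choice of $r$ is exactly where the hypothesis $s>\tfrac12$ enters the paper's proof (to make $r>3$ so that the embedding holds), rather than through time-integrability at $\tau=0$ as you suggest.

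If you push your direct $L^2\times L^3\times L^6$ route through---trading $\|\nabla u\|_{L^6}$ for $\|\nabla F\|_{L^2}\le C(\|\dot u\|_{L^2}+\|B\nabla B\|_{L^2})$ and then integrating in time---you will find that the term $\int_0^T\sigma^{2-s}\|\nabla u\|_{L^2}\|B\nabla B\|_{L^2}^3\,d\tau$ forces you to extract one factor of $\|B\nabla B\|_{L^2}$ as a supremum, landing you on $\sup_\tau\sigma^{2-s}\intox|B|^2|\nabla B|^2\,dx$ again. So either add the $\mathcal{A}_3$-estimate (with its $r=6/(3-2s)$ Sobolev argument) to your outline, or show explicitly how to close without it; as written the proposal treats this step as already done by Lemma~\ref{auxiliary bound 1 lemma}, which it is not.
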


\begin{proof}
Following the steps given in \cite{suenhoff12}, we arrive at
\begin{align}\label{H2 bound step 1}
&\sup_{0\le \tau\le T}\tau^{2-s}\intox(|\dot{u}|^2+|B_t|^2)dx+\intoxt \tau^{2-s}(|\nabla\dot{u}|^2+|\nabla B_t|^2)dxd\tau\notag\\
&\le CC_0+C\intoxt \tau^{2-s}|B|^2|u|^2(|\nabla u|^2+|\nabla B|^2)dxd\tau\notag\\
&\qquad+C\intoxt \tau^{2-s}(|B|^2|B_t|^2+|u|^2|\dot{u}|^2+|B_t|^2|u|^2)dxd\tau\notag\\
&\qquad+C\intoxt \tau^{2-s}(|\nabla u|^4+|\nabla B|^4)dxd\tau.
\end{align}
To facilitate the proof, we define the following auxiliary functionals:
\begin{align*}
\mathcal{A}_1(T)&=\sup_{0\le \tau\le T}\tau^{2-s}\intox(|\dot{u}|^2+|B_t|^2)dx+\intoxt \tau^{2-s}(|\nabla\dot{u}|^2+|\nabla B_t|^2)dxd\tau,\\
\mathcal{A}_2(T)&=\intoxt \tau^{2-s}(|\nabla u|^4+|\nabla B|^4)dxd\tau,\\
\mathcal{A}_3(T)&=\sup_{0\le \tau\le T}\intox \tau^{2-s}(|\nabla B|^2|B|^2+|\nabla u|^2|B|^2+|\nabla B|^2|u|^2)dx.
\end{align*}
Our goal is to prove that
\begin{align}\label{bound on A123}
\mathcal{A}_1+\mathcal{A}_2+\mathcal{A}_3\le CC_0^{\theta},
\end{align}
for some $\theta>0$. We first consider the right side of \eqref{H2 bound step 1}. To bound the term $\intoxt \tau^{2-s}|B|^2|B_t|^2dxd\tau$, using the bound \eqref{H1 bound}, we have
\begin{align*}
&\intoxt \tau^{2-s}|B|^2|B_t|^2dxd\tau\\
&\le\Big(\intoxt|B|^6dxd\tau\Big)^\frac{1}{3}\Big(\intoxt t^{\frac{6-3s}{2}}|B_t|^2dxd\tau\Big)^\frac{2}{3}\\
&\le CC_0^\frac{1}{3}\Big(\sup_{0\le \tau\le T}\tau^{2-s}\intox|B_t|^2dx\Big)^\frac{1}{3}\Big(\intoxt \tau^{2-s}|B_t|^2dxd\tau\Big)^\frac{1}{6}\\
&\qquad\times\Big(\intoxt \tau^{2-s}|\nabla B_t|^2dxd\tau\Big)^\frac{1}{2}\\
&\le CC_0^\frac{1}{3}\mathcal{A}_1^\frac{1}{3}C_0^\frac{1}{6}\mathcal{A}_1^\frac{1}{2}=CC_0^\frac{1}{2}\mathcal{A}_1^\frac{5}{6}.
\end{align*}
Similarly, the term $\intoxt \tau^{2-s}|B|^2|\dot{u}|^2$ can be bounded by
\begin{align*}
&\intoxt \tau^{2-s}|B|^2|\dot{u}|^2dxd\tau\\
&\le C\Big(\intoxt|B|^6dxd\tau\Big)^\frac{1}{3}\Big(\sup_{0\le \tau\le T}\tau^{2-s}\intox|\dot{u}|^2dx\Big)^\frac{1}{3}\\
&\qquad\times\Big(\intoxt \tau^{2-s}|\dot{u}|^2dxd\tau\Big)^\frac{1}{6}\Big(\intoxt \tau^{2-s}|\nabla\dot{u}|^2dxd\tau\Big)^\frac{1}{2}\\
&\le CC_0^\frac{1}{2}\mathcal{A}_1^\frac{5}{6}.
\end{align*}
To bound the term $\intoxt \tau^{2-s}|B|^2|u|^2(|\nabla u|^2+|\nabla B|^2)dxd\tau$, we have
\begin{align*}
&\intoxt \tau^{2-s}|B|^2|u|^2(|\nabla u|^2+|\nabla B|^2)dxd\tau\\
&\le \intoxt \tau^{2-s}(|\nabla u|^4+|\nabla B|^4)dxd\tau+\intoxt \tau^{2-s}(|B|^8+|u|^8)dxd\tau\\
&\le \mathcal{A}_2+\intoxt \tau^{2-s}(|B|^8+|u|^8)dxd\tau.
\end{align*}
Using the bounds \eqref{Lr bound general} and  \eqref{L infty bound general}, the term $\intoxt \tau^{2-s}|u|^8dxd\tau$ can be estimated as follows.
\begin{align*}
&\intoxt \tau^{2-s}|u|^8dxd\tau\\
&\le\Big(\sup_{0\le \tau\le T}\intox|u|^4dx\Big)\Big(\int_0^T\tau^{2-s}\|u(\cdot,\tau)\|^4_{L^\infty}d\tau\Big)\\
&\le C\Big(\sup_{0\le \tau\le T}\intox|u|^2dx\Big)^\frac{1}{2}\Big(\sup_{0\le \tau\le T}\intox|u|^6dx\Big)^\frac{1}{2}\\
&\qquad\times\int_0^T\tau^{2-s}\Big(\intox(|u|^4+|\nabla u|^4)dx\Big)d\tau\\
&\le CC_0\Big[\int_0^T\tau^{2-s}\Big(\intox|u|^2dx\Big)^\frac{1}{2}\Big(\intox|\nabla u|^2dx\Big)^\frac{3}{2}d\tau+\mathcal{A}_2\Big]\\
&\le CC_0\Big(C_0^2+\mathcal{A}_2\Big).
\end{align*}
The term $\intoxt \tau^{2-s}|B|^8dxd\tau$ can be estimated in a similar way and we obtain
\begin{align*}
\intoxt \tau^{2-s}|B|^2|u|^2(|\nabla u|^2+|\nabla B|^2)dxd\tau\le\mathcal{A}_2+CC_0\Big(C_0^2+\mathcal{A}_2\Big).
\end{align*}
Therefore we have
\begin{align}\label{bound on A1}
\mathcal{A}_1\le CC_0^\frac{1}{2}\mathcal{A}_1^\frac{5}{6}+CC_0\Big(C_0^2+\mathcal{A}_2\Big).
\end{align}
Next we consider the term $\mathcal{A}_2$. Using the estimate \eqref{bound on nabla u}, we obtain
\begin{align*}
\intoxt \tau^{2-s}|\nabla u|^4dxd\tau\le C\intoxt \tau^{2-s}(|F|^4+|\omega|^4+|P-\tilde P|^4)dxd\tau.
\end{align*}
For $s\in(\frac{1}{2},1]$, using the bounds \eqref{H1 bound} and \eqref{auxiliary bound 1}, we have 
\begin{align*}
&\intoxt \tau^{2-s}|F|^4dxd\tau\\
&\le C\Big(\sup_{0\le \tau\le T}\tau^{1-s}\intox|F|^2dx\Big)^\frac{1}{2}\\
&\qquad\qquad\times\Big(\sup_{0\le \tau\le T}\tau^{2-s}\intox|\nabla F|^2dx\Big)^\frac{1}{2}\Big(\intoxt \tau^{1-s}|\nabla F|^2dxd\tau\Big)\\
&\le CC_0^\frac{1}{2}\Big(\sup_{0\le \tau\le T}\tau^{2-s}\intox(|\dot{u}|^2+|\nabla B|^2|B|^2)dx\Big)^\frac{1}{2}\\
&\qquad\qquad\times\Big(\intoxt \tau^{1-s}(|\dot{u}|^2+|\nabla B|^2|B|^2)dxd\tau\Big)^\frac{1}{2}\\
&\le CC_0^\frac{1}{2}\Big(\mathcal{A}_1+\mathcal{A}_3\Big)\Big(C_0+C_0^\theta\Big).
\end{align*}
The term $\omega$ and $P-\tilde P$ can be treated in a similar way, and we have
\begin{align*}
\intoxt \tau^{2-s}|\nabla u|^4dxd\tau\le CC_0^\frac{1}{2}\Big(\mathcal{A}_1+\mathcal{A}_3\Big)\Big(C_0+C_0^\theta\Big)+CC_0.
\end{align*}
For $\intoxt \tau^{2-s}|\nabla B|^4dxd\tau$, we estimate it as follows.
\begin{align*}
&\intoxt \tau^{2-s}|\nabla B|^4dxd\tau\\
&\le C\int_0^T\tau^{2-s}\Big(\intox|\nabla B|^2dx\Big)^\frac{1}{2}\Big(\intox|\Delta B|^2dx\Big)^\frac{3}{2}d\tau\\
&\le C\Big(\sup_{0\le \tau\le T}\tau^{2-s}\intox|\Delta B|^2dx\Big)^\frac{1}{2}\Big(\sup_{0\le \tau\le T}\tau^{1-s}\intox|\nabla B|^2dx\Big)^\frac{1}{2}\\
&\qquad\times\Big(\intoxt \tau^{1-s}|\Delta B|^2dxd\tau\Big)\\
&\le C\Big(\mathcal{A}_1+\mathcal{A}_3\Big)^\frac{1}{2}C_0^\frac{1}{2}\Big(C_0+C_0^\theta\Big).
\end{align*}
Hence we obtain
\begin{align}\label{bound on A2}
\mathcal{A}_2\le CC_0^\frac{1}{2}\Big(\mathcal{A}_1+\mathcal{A}_3\Big)\Big(C_0+C_0^\theta\Big)+CC_0+C\Big(\mathcal{A}_1+\mathcal{A}_3\Big)^\frac{1}{2}C_0^\frac{1}{2}\Big(C_0+C_0^\theta\Big).
\end{align}
It remains to estimate $\mathcal{A}_3$. For $s\in(\frac{1}{2},1]$, we define $r=\frac{6}{3-2s}$, then we have $r\in(3,6]$. Simple computations yield
\begin{align*}
\frac{6-r}{2r}=1-s,\qquad\frac{3r-6}{2r}=s.
\end{align*}
We bound the term $\tau^{2-s}\intox|\nabla u|^2|B|^2dx$ as follows. Since $r>3$, we can apply \eqref{L infty bound general} to obtain
\begin{align*}
&\tau^{2-s}\intox|\nabla u|^2|B|^2dx\\
&\le \tau\Big(\tau^{1-s}\intox|\nabla u|^2dx\Big)\|B(\cdot,\tau)\|^2_{L^\infty}\\
&\le C\tau\Big(\tau^{1-s}\intox|\nabla u|^2dx\Big)\Big(\intox|B|^rdx+\intox|\nabla B|^rdx\Big)^\frac{2}{r}.
\end{align*}
The term $\|B\|_{L^r}$ can be bounded by the $L^2-L^6$ interpolation on $B$. Hence using \eqref{Lr bound general} and the bound \eqref{H1 estimates}, we obtain
\begin{align*}
&\tau^{2-s}\intox|\nabla u|^2|B|^2dx\\
&\le C\tau C_0^3+C\tau\Big(\tau^{1-s}\intox|\nabla u|^2dx\Big)\Big(\intox|\nabla B|^2dx\Big)^\frac{6-r}{2r}\Big(\intox|\Delta B|^2dx\Big)^\frac{3r-6}{2r}\\
&\le C\tau C_0^3+C\Big(\tau^{1-s}\intox|\nabla u|^2dx\Big)\Big(\tau^{1-s}\intox|\nabla B|^2dx\Big)^{1-s}\Big(\tau^{2-s}\intox|\Delta B|^2dx\Big)^s\\
&\le C\tau C_0^3+CC_0C_0^{1-s}\Big(\tau^{2-s}\intox|B_t|^2dx+\intox(|\nabla B|^2|u|^2+|\nabla u|^2|B|^2)dx\Big)^s\\
&\le C\tau C_0^3+CC_0C_0^{1-s}\Big(\mathcal{A}_1+\mathcal{A}_3\Big)^s.
\end{align*}
By similar method, we obtain
\begin{align*}
\tau^{2-s}\intox|\nabla B|^2|B|^2dx&\le t\Big(\tau^{1-s}\intox|\nabla B|^2dx\Big)\|B(\cdot,\tau)\|^2_{L^\infty}\\
&\le C\tau C_0^3+CC_0C_0^{1-s}\Big(\mathcal{A}_1+\mathcal{A}_3\Big)^s.
\end{align*}
For the term $\tau^{2-s}\intox|\nabla B|^2|u|^2dx$, we have
\begin{align*}
\tau^{2-s}\intox|\nabla B|^2|u|^2dx&\le \tau\Big(\tau^{1-s}\intox|\nabla B|^2dx\Big)\|u(\cdot,\tau)\|^2_{L^\infty}\\
&\le C\tau\Big(\tau^{1-s}\intox|\nabla B|^2dx\Big)\Big(\intox|u|^rdx+\intox|\nabla u|^rdx\Big)^\frac{2}{r}.
\end{align*}
Similar to the case for $B$, the term $\|u\|_{L^r}$ can be bounded by the $L^2-L^6$ interpolation on $u$. For the term $\|\nabla u\|_{L^r}$, we can apply \eqref{bound on nabla u} with the bounds \eqref{bound on F}-\eqref{bound on nabla F} to get
\begin{align*}
\tau\Big(\intox|\nabla u|^rdx\Big)^\frac{2}{r}&\le C\tau\Big(\intox(|F|^r+|\omega|^r+|\rho-\tilde\rho|^r)dx\Big)^\frac{2}{r}\\
&\le C\Big(\tau^{1-s}\intox|F|^2dx\Big)^{1-s}\Big(\tau^{1-s}\intox|\nabla F|^2dx\Big)^{s}\\
&+C\Big(\tau^{1-s}\intox|\omega|^2dx\Big)^{1-s}\Big(\tau^{1-s}\intox|\nabla \omega|^2dx\Big)^{s}+C\Big(\intox|\rho-\tilde\rho|^rdx\Big)^\frac{2}{r}\\
&\le CC_0^{1-s}\Big(\mathcal{A}_1+\mathcal{A}_3\Big)^s+C\Big(\intox|\rho-\tilde\rho|^rdx\Big)^\frac{2}{r}.
\end{align*}
Together with the bound on $P-\tilde P$ given by \eqref{L2 bound}, we obtain
\begin{align*}
&\tau^{2-s}\intox|\nabla B|^2|u|^2dx\\
&\le C\tau C_0^3+CC_0C_0^{1-s}\Big(\mathcal{A}_1+\mathcal{A}_3\Big)^s+CC_0C_0^\frac{2}{r},
\end{align*}
and therefore by taking supremum over $\tau\in[0,T]$,
\begin{align}\label{bound on A3}
\mathcal{A}_3\le CC_0^3+CC_0C_0^{1-s}\Big(\mathcal{A}_1+\mathcal{A}_3\Big)^s+CC_0C_0^\frac{2}{r}.
\end{align}
Combining the results \eqref{bound on A1}, \eqref{bound on A2} and \eqref{bound on A3}, and utilising the smallness condition on $C_0$, there is some $\theta>0$ such that \eqref{bound on A123} holds. In particular, 
\begin{align*}
\sup_{0\le \tau\le T}\tau^{2-s}\intox(|\dot{u}|^2+|B_t|^2)dx+\intoxt \tau^{2-s}(|\nabla\dot{u}|^2+|\nabla B_t|^2)dxd\tau\le CC_0^{\theta}.
\end{align*}
We finish the proof of \eqref{H2 bound}.
\end{proof}

By combining the estimates obtained in previous lemmas, we have the following estimates on $\mathcal{A}(T)$ when $T\le1$ under the pointwise bound \eqref{pointwise bound on rho finite time} on $\rho$ and smallness assumption on $C_0$. In other words, for $C_0\ll1$ and $T\le1$, we have
\begin{equation}\label{bound on A(t) t<1}
\mathcal{A}(T)\le CC_0^{\theta},
\end{equation}
for some $\theta>0$.

\subsection{Estimates on $u$ and $B$ for $T>1$}\label{large time estimates} 

In this subsection, we proceed to estimate $\mathcal{A}(T)$ when $T>1$. In view of the definition of $\mathcal{A}$ and the bounds \eqref{L2 bound} and \eqref{bound on A(t) t<1}, it suffices to bound the following term for $t>1$:
\begin{align}\label{def of A(t) t>1}
\cA (t)&=\sup_{1\le\tau\le t}\Big[\intox(|\nabla u|^2+|\nabla B|^2+|\dot{u}|^2+|B_t|^2)\Big](x,\tau)dx\notag\\
&\qquad+\intoxtl(|\dot{u}|^2+|B_t|^2+|\nabla\dot{u}|^2+|\nabla B_t|^2)dxd\tau.
\end{align}

Before we estimate $\cA (t)$, we state the following estimates on $\rho$ which was proved in Hoff \cite{hoff95}.

\begin{proposition}
Assume that $\rho$ satisfies \eqref{pointwise bound on rho finite time}, then for $t>1$, we have
\begin{equation}\label{bound on L4 of rho T>1}
\intoxtl |\rho-\tilde\rho|^4dxd\tau\le C\intoxtl  |F|^4dxd\tau+CC_0.
\end{equation}
\end{proposition}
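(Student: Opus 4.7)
The plan is to derive a differential inequality for $\mathcal{I}(t):=\int_{\R^3}|\rho(\cdot,t)-\tilde\rho|^4\,dx$ along the flow, using the defining identity $(\mu+\lambda)\divv u = F + (P(\rho)-P(\tilde\rho))$ of the effective viscous flux to convert $\divv u$ into a sum of a dissipative pressure contribution and an $F$-term. Once such an inequality of the form $\mathcal{I}'(t)+c\mathcal{I}(t)\le C\|F(\cdot,t)\|_{L^4}^4$ is in hand, integration in time on $[1,t]$ and a bound on $\mathcal{I}(1)$ yield \eqref{bound on L4 of rho T>1}.

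First I would use $\rho_t=-\divv(\rho u)$ and integrate by parts, writing $\divv(\rho u)=u\cdot\nabla\rho+\rho\divv u$ and noting that $-4\int(\rho-\tilde\rho)^3 u\cdot\nabla\rho\,dx = \int|\rho-\tilde\rho|^4\divv u\,dx$. The remaining piece $-4\int(\rho-\tilde\rho)^3\rho\divv u\,dx$ is split by $\rho=(\rho-\tilde\rho)+\tilde\rho$, yielding
\begin{equation*}
\frac{d}{dt}\mathcal{I}(t)=-\int_{\R^3}(\rho-\tilde\rho)^3(3\rho+\tilde\rho)\,\divv u\,dx.
\end{equation*}
Substituting $(\mu+\lambda)\divv u = F+(P(\rho)-P(\tilde\rho))$ produces two terms.

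For the pressure term, the factor $3\rho+\tilde\rho$ is uniformly bounded below by the pointwise bound \eqref{pointwise bound on rho finite time}, and $(\rho-\tilde\rho)(P(\rho)-P(\tilde\rho))\ge c_0|\rho-\tilde\rho|^2$ by the monotonicity \eqref{condition on pressure} together with \eqref{pointwise bound on rho finite time}; since the remaining $(\rho-\tilde\rho)^2$ factor is nonnegative, the whole integrand is nonnegative and bounded below by $c|\rho-\tilde\rho|^4$, giving the dissipation $-c\mathcal{I}(t)/(\mu+\lambda)$. For the $F$-term, the factor $(\rho-\tilde\rho)^3(3\rho+\tilde\rho)$ is bounded by $C|\rho-\tilde\rho|^3$, so H\"older and Young's inequalities give
\begin{equation*}
\Bigl|\tfrac{1}{\mu+\lambda}\int(\rho-\tilde\rho)^3(3\rho+\tilde\rho)F\,dx\Bigr|\le \epsilon\,\mathcal{I}(t)+C_\epsilon\int_{\R^3}|F|^4\,dx.
\end{equation*}
Absorbing $\epsilon\mathcal{I}$ into the dissipation yields the inequality $\mathcal{I}'(t)+c\mathcal{I}(t)\le C\|F(\cdot,t)\|_{L^4}^4$.

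Integrating from $1$ to $t$ gives $\mathcal{I}(t)+c\int_1^t\mathcal{I}(\tau)\,d\tau\le \mathcal{I}(1)+C\int_1^t\!\!\int_{\R^3}|F|^4\,dxd\tau$, and it remains only to bound $\mathcal{I}(1)$. Using $|\rho-\tilde\rho|\le \max(2\rho_2,\tilde\rho)$ from \eqref{pointwise bound on rho finite time} together with the $L^2$ estimate \eqref{L2 bound} of Lemma~\ref{L2 estimate}, one has $\mathcal{I}(1)\le C\|\rho(\cdot,1)-\tilde\rho\|_{L^2}^2\le CC_0$, which closes \eqref{bound on L4 of rho T>1}. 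The only mildly delicate point is tracking the coercivity constant $c$ so that it depends only on $\rho_1,\rho_2,\tilde\rho,P$ (not on time), which is ensured by the standing pointwise bound \eqref{pointwise bound on rho finite time} assumed in the proposition.
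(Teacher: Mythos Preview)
Your argument is correct. The paper itself does not give a proof of this proposition; it simply recalls the estimate from Hoff \cite{hoff95}. What you have written is precisely the standard computation from that reference: differentiate $\int|\rho-\tilde\rho|^4\,dx$, use the continuity equation and an integration by parts to get the factor $(3\rho+\tilde\rho)\divv u$, substitute $(\mu+\lambda)\divv u=F+(P(\rho)-\tilde P)$, exploit monotonicity of $P$ together with the pointwise bound \eqref{pointwise bound on rho finite time} for the dissipative term, and absorb the $F$-contribution by Young's inequality. The bound on $\mathcal I(1)$ via $L^\infty$--$L^2$ interpolation and \eqref{L2 bound} is also the standard way to close the estimate. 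So your proposal and the paper (via its citation) take the same approach.
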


We further introduce the following auxiliary bounds \eqref{L4 norm on u and B T>1}-\eqref{L infty norm on u and B T>1} which will be used in controlling $\cA(t)$. For simplicity, we define an auxiliary functional $\cH(t)$ by
\[\cH(t)=\intoxtl (|\nabla u|^4+|\nabla B|^4)dxd\tau.\]

\begin{lemma}\label{auxiliary estimates T>1}
Assume that $\rho$ satisfies \eqref{pointwise bound on rho finite time}, then for $t>1$, we have
\begin{align}
&\intoxtl(|u|^4+|B|^4)dxd\tau\le CC_0^\frac{3}{2}\cA (t)^\frac{1}{2},\label{L4 norm on u and B T>1}\\
&\intoxtl(|u|^8+|B|^8)dxd\tau\le CC_0^2\cA (t)^2+CC_0^\frac{1}{2}\cA (t)^\frac{3}{2}\cH(t),\label{L8 norm on u and B T>1}\\
&\sup_{1\le \tau\le t}\Big(\|u(\cdot,\tau)\|_{L^\infty}+\|B(\cdot,\tau)\|_{L^\infty}\Big)\le C\Big(\cA (t)^2+\cA (t)^\frac{1}{2}+C_0^{\theta}\Big),\label{L infty norm on u and B T>1}
\end{align}
where $\theta>0$.
\end{lemma}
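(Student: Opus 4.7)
The plan is to prove the three bounds in sequence, relying on Sobolev/Gagliardo--Nirenberg interpolation and the already-established $L^2$ estimate $\int_1^t\!\!\int_{\R^3}(|\nabla u|^2+|\nabla B|^2)\,dx\,d\tau\le CC_0$ together with the $\sup_\tau\int(|\nabla u|^2+|\nabla B|^2+|\dot u|^2+|B_t|^2)\le \mathcal{A}(t)$ part of the definition of $\mathcal{A}$.

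For \eqref{L4 norm on u and B T>1}, I use the interpolation $\|w\|_{L^4}^4\le C\|w\|_{L^2}\|\nabla w\|_{L^2}^3$ (Gagliardo--Nirenberg on $\R^3$), applied to $w=u$ and $w=B$. Pulling out one $\sup$-factor of $\|\nabla w\|_{L^2}$ (bounded by $\mathcal{A}(t)^{1/2}$) and one of $\|w\|_{L^2}$ (bounded by $C_0^{1/2}$ thanks to Lemma~\ref{L2 estimate}), what remains is $\int_1^t\|\nabla w\|_{L^2}^2\,d\tau\le CC_0$, giving the stated $CC_0^{3/2}\mathcal{A}(t)^{1/2}$.

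For \eqref{L8 norm on u and B T>1}, the key identity is $\int|w|^8\le\|w\|_{L^\infty}^4\int|w|^4$, combined with the Morrey embedding $\|w\|_{L^\infty}^4\le C(\|w\|_{L^4}^4+\|\nabla w\|_{L^4}^4)$ (using $W^{1,4}(\R^3)\hookrightarrow L^\infty$ from \eqref{L infty bound general}). This produces two time integrals: the first is $\int_1^t\|w\|_{L^4}^8\,d\tau$, which by $\|w\|_{L^4}^4\le C\|w\|_{L^2}\|\nabla w\|_{L^2}^3$ and the same time-integration as in (1) gives $CC_0^2\mathcal{A}(t)^2$; the second is $\int_1^t\|\nabla w\|_{L^4}^4\|w\|_{L^4}^4\,d\tau$, where pulling out $\sup\|w\|_{L^2}\cdot\sup\|\nabla w\|_{L^2}^3$ (bounded by $C_0^{1/2}\mathcal{A}(t)^{3/2}$) in front of $\int_1^t\|\nabla w\|_{L^4}^4\,d\tau=\mathcal{H}(t)$ (or its $B$-counterpart) yields the second term.

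For \eqref{L infty norm on u and B T>1}, I again invoke $\|w\|_{L^\infty}\le C(\|w\|_{L^4}+\|\nabla w\|_{L^4})$. The $L^4$ piece is controlled by $CC_0^{1/8}\mathcal{A}(t)^{3/8}\le C(C_0^\theta+\mathcal{A}(t)^{1/2})$ via Young's. For $\|\nabla u\|_{L^4}$, Lemma~\ref{estimate on nabla u lemma} reduces it to $\|F\|_{L^4}+\|\omega\|_{L^4}+\|P-\tilde P\|_{L^4}$; the pressure term is uniformly bounded, while Gagliardo--Nirenberg gives $\|F\|_{L^4}\le C\|F\|_{L^2}^{1/4}\|\nabla F\|_{L^2}^{3/4}$ (and similarly for $\omega$), and Lemma~\ref{estimate on effective viscous flux} bounds $\|\nabla F\|_{L^2}$ by $\|\dot u\|_{L^2}+\|B\nabla B\|_{L^2}\le C\mathcal{A}(t)^{1/2}(1+\|B\|_{L^\infty})$. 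For $\|\nabla B\|_{L^4}$, I use $\|\nabla B\|_{L^4}\le C\|\nabla B\|_{L^2}^{1/4}\|\Delta B\|_{L^2}^{3/4}$ and extract $\|\Delta B\|_{L^2}$ from \eqref{MHD3} as $\|B_t\|_{L^2}+\|\nabla(Bu-uB)\|_{L^2}\le C\mathcal{A}(t)^{1/2}(1+\|u\|_{L^\infty}+\|B\|_{L^\infty})$. Setting $X=\|u\|_{L^\infty}+\|B\|_{L^\infty}$, these assemble into a self-referential inequality $X\le C(C_0^\theta+\mathcal{A}(t)^{1/2})+C\mathcal{A}(t)^{1/2}X^{3/4}$, and Young's inequality with exponents $(4,4/3)$ absorbs the $X$ on the right, generating the $\mathcal{A}(t)^2$ contribution.

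The main obstacle is \eqref{L infty norm on u and B T>1}: the coupling of $\|u\|_{L^\infty}$ and $\|B\|_{L^\infty}$ through the nonlinear transport term in the induction equation and the magnetic pressure in $\nabla F$ forces a simultaneous bootstrap rather than a one-sided bound, and the final exponent $\mathcal{A}(t)^2$ is precisely what comes out of closing that bootstrap via Young's inequality. The subtlety is to keep all $C_0$-factors explicit and to ensure only powers $\mathcal{A}(t)^{1/2}$ and $\mathcal{A}(t)^2$ appear (no intermediate powers), which is what dictates the choice of $L^4$ rather than some other $L^r$ in Morrey's embedding.
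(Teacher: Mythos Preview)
Your proposal is correct and follows essentially the same route as the paper: Gagliardo--Nirenberg interpolation for \eqref{L4 norm on u and B T>1}, splitting $|w|^8=|w|^4\cdot|w|^4$ together with the Morrey embedding $W^{1,4}\hookrightarrow L^\infty$ for \eqref{L8 norm on u and B T>1}, and for \eqref{L infty norm on u and B T>1} the same reduction of $\|\nabla u\|_{L^4}$ to $F,\omega,P-\tilde P$ and of $\|\nabla B\|_{L^4}$ to $\|\Delta B\|_{L^2}$ via \eqref{MHD3}, closed by the self-referential inequality $X\le C(C_0^\theta+\cA^{1/2})+C\cA^{1/2}X^{3/4}$ and Young's inequality. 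The only cosmetic difference is that in \eqref{L8 norm on u and B T>1} the paper pulls $\sup_\tau\|w\|_{L^4}^4$ outside before applying Morrey to the remaining time integral, whereas you apply Morrey pointwise in time first; the resulting bounds are identical.
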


\begin{proof}
To prove \eqref{L4 norm on u and B T>1}, using \eqref{Lr bound general}, \eqref{L2 bound} and the definition of $\cA(t)$, we can bound the term $\intoxtl(|u|^4+|B|^4)$ as follows.
\begin{align*}
&\intoxtl(|u|^4+|B|^4)dxd\tau\notag\\
&\le C\int_1^T\Big(\intox|u|^2dx\Big)^\frac{1}{2}\Big(\intox|\nabla u|^2dx\Big)^\frac{3}{2}d\tau+C\int_1^T\Big(\intox|B|^2dx\Big)^\frac{1}{2}\Big(\intox|\nabla B|^2dx\Big)^\frac{3}{2}d\tau\notag\\
&\le CC_0^\frac{3}{2}\cA ^\frac{1}{2}.
\end{align*} 
Next to prove \eqref{L8 norm on u and B T>1}, using \eqref{L infty bound general} and \eqref{L4 norm on u and B T>1}, we have
\begin{align*}
&\intoxtl|u|^8dxd\tau\\
&\le \Big(\sup_{1\le \tau\le t}\intox|u|^4dx\Big)\int_1^T\|u(\cdot,\tau)\|^4_{L^\infty}d\tau\\
&\le C \Big(\sup_{1\le \tau\le t}\intox|u|^2dx\Big)^\frac{1}{2}\Big(\sup_{1\le \tau\le t}\intox|\nabla u|^2dx\Big)^\frac{3}{2}\Big(\intoxtl(|u|^4+|\nabla u|^4)dxd\tau\Big)\\
&\le CC_0^2\cA ^2+CC_0^\frac{1}{2}\cA ^\frac{3}{2}\intoxtl|\nabla u|^4dxd\tau,
\end{align*}
and similarly,
\begin{align*}
\intoxtl|B|^8dxd\tau\le CC_0^2\cA ^2+CC_0^\frac{1}{2}\cA ^\frac{3}{2}\intoxtl|\nabla B|^4dxd\tau.
\end{align*}
Finally, to show \eqref{L infty norm on u and B T>1}, we use \eqref{L infty bound general}, \eqref{bound on nabla u} and \eqref{L2 bound} to obtain, for $1\le\tau\le t$,
\begin{align*}
&\|u(\cdot,\tau)\|_{L^\infty}\\
&\le C\Big(\intox|u|^4dx\Big)^\frac{1}{4}+C\Big(\intox|\nabla u|^4dx\Big)^\frac{1}{4}\\
&\le C\Big(\intox|u|^2dx\Big)^\frac{1}{8}\Big(\intox|\nabla u|^2dx\Big)^\frac{3}{8}\\
&\qquad+C\Big[\Big(\intox|F|^4dx\Big)^\frac{1}{4}+\Big(\intox|\omega|^4dx\Big)^\frac{1}{4}+\Big(\intox|\rho-\tilde\rho|^4dx\Big)^\frac{1}{4}\Big]\\
&\le CC_0^\frac{1}{8}\cA^\frac{3}{8}+C\Big[\Big(\intox|F|^4dx\Big)^\frac{1}{4}+\Big(\intox|\omega|^4dx\Big)^\frac{1}{4}+C_0^\frac{1}{4}\Big].
\end{align*}
To estimate the term involving $F$, we apply \eqref{bound on F} and \eqref{bound on F} together with \eqref{Lr bound general} to have
\begin{align*}
&\Big(\intox|F|^4dx\Big)^\frac{1}{4}\\
&\le C\Big(\intox|F|^2dx\Big)^\frac{1}{8}\Big(\intox|\nabla F|^2dx\Big)^\frac{3}{8}\\
&\le C\Big(\intox(|\nabla u|^2+|\rho-\tilde\rho|^2)dx\Big)^\frac{1}{8}\Big(\intox(|\dot{u}|^2+|\nabla B|^2|B|^2)dx\Big)^\frac{3}{8}\\
&\le C\Big(\cA+C_0\Big)^\frac{1}{8}\Big(\cA+\|B(\cdot,\tau)\|_{L^\infty}^2\cA\Big)^\frac{3}{8}.
\end{align*}
The term involving $\omega$ can be estimated in a similar way to $F$, so we obtain
\begin{align*}
\|u(\cdot,\tau)\|_{L^\infty}\le C\Big(C_0^\frac{1}{8}\cA^\frac{3}{8}+\cA^\frac{1}{2}+\cA^\frac{1}{2}\|B(\cdot,\tau)\|_{L^\infty}^\frac{3}{4}+C_0^\frac{1}{8}\cA^\frac{3}{8}\|B(\cdot,\tau)\|_{L^\infty}^\frac{3}{4}+C_0^\frac{1}{4}\Big).
\end{align*}
For $\|B(\cdot,\tau)\|_{L^\infty}$, we can estimate it as follows.
\begin{align*}
&\|B(\cdot,\tau)\|_{L^\infty}\\
&\le C\Big(\intox|B|^4dx\Big)^\frac{1}{4}+C\Big(\intox|\nabla B|^4dx\Big)^\frac{1}{4}\\
&\le C\Big(\intox|B|^2dx\Big)^\frac{1}{8}\Big(\intox|\nabla B|^2dx\Big)^\frac{3}{8}+C\Big(\intox|\nabla B|^2dx\Big)^\frac{1}{8}\Big(\intox|\Delta B|^2dx\Big)^\frac{3}{8}\\
&\le CC_0^\frac{1}{8}\cA^\frac{3}{8}+C\cA^\frac{1}{8}\Big(\intox(|B_t|^2+|\nabla B|^2|u|^2+|\nabla u|^2|B|^2)dx\Big)^\frac{3}{8}\\
&\le CC_0^\frac{1}{8}\cA^\frac{3}{8}+C\cA^\frac{1}{8}\Big(\cA+(\|B\|_{L^\infty}+\|u\|_{L^\infty})^2\cA\Big)^\frac{3}{8}\\
&\le CC_0^\frac{1}{8}\cA^\frac{3}{8}+C\cA^\frac{1}{2}+C\cA^\frac{1}{2}(\|B\|_{L^\infty}+\|u\|_{L^\infty})^\frac{3}{4}.
\end{align*}
Therefore with the help of Cauchy's inequality, the bound \eqref{L infty norm on u and B T>1} then follows.
\end{proof}

We are now going to obtain the estimates on $\cA (t)$. The main difference between here and the analysis given in subsection~\ref{small time estimates} is that, we {\it cannot} use the $L^6$ estimates on $u$ and $B$ in controlling $\cA (t)$ for $t>1$. We therefore need to make use of some new methods which will be illustrated in Lemma~\ref{estimates on A(t) t>1 lemma} as below.

\begin{lemma}\label{estimates on A(t) t>1 lemma}
Assume that $\rho$ satisfies \eqref{pointwise bound on rho finite time}. For $C_0\ll1$ and $t>1$, we have
\begin{equation}\label{bound on A(t) t>1}
\cA (t)\le CC_0^{\theta},
\end{equation}
for some $\theta>0$.
\end{lemma}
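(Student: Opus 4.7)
The strategy mirrors the small-time estimate of Lemma~\ref{H2 estimates}: I would produce an energy-type identity for $\dot u$ and $B_t$ on $[1,t]$ and then close the resulting nonlinear inequality using the auxiliary bounds \eqref{L4 norm on u and B T>1}--\eqref{L infty norm on u and B T>1} together with the smallness of $C_0$. The essential new difficulty compared with subsection~\ref{small time estimates} is that no $L^6$ bound on $(u,B)$ is available uniformly in $t$ for large $t$; its role is played here by the interpolation estimates of Lemma~\ref{auxiliary estimates T>1}, which already involve $\cA(t)$ and must therefore be reabsorbed with care.

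First I would apply $\partial_t + \divv(\cdot\,u)$ to the momentum equation \eqref{MHD2}, multiply by $\dot u$, and integrate over $\R^3\times[1,t]$. In parallel, differentiate \eqref{MHD3} in $t$, multiply by $B_t$, and integrate. By the computations already used for \eqref{H2 bound step 1}, the boundary term at $\tau=1$ is bounded by $CC_0^\theta$ thanks to \eqref{bound on A(t) t<1}, while the bulk terms produce, modulo dissipation,
\[
\cA(t) \le CC_0^\theta + C\intoxtl\bigl(|\nabla u|^4+|\nabla B|^4\bigr)dxd\tau + C\intoxtl\bigl(|B|^2+|u|^2\bigr)\bigl(|\dot u|^2+|B_t|^2\bigr)dxd\tau + (\text{lower order}).
\]
The supremum of $\|\nabla u\|_{L^2}^2+\|\nabla B\|_{L^2}^2$ inside $\cA(t)$ is recovered by testing \eqref{MHD2} directly with $\dot u$ and \eqref{MHD3} with $B_t$ at each time $\tau\in[1,t]$.

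Next I would introduce the auxiliary quantity $\cH(t) = \intoxtl(|\nabla u|^4+|\nabla B|^4)dxd\tau$ and estimate it using Lemma~\ref{estimate on effective viscous flux} and Lemma~\ref{estimate on nabla u lemma}: on the $u$-side, $\|\nabla u\|_{L^4}\lesssim \|F\|_{L^4}+\|\omega\|_{L^4}+\|P-\tilde P\|_{L^4}$, interpolated between $L^2$ and $\dot H^1$ via \eqref{Lr bound general}, then controlled by $\|\dot u\|_{L^2}+\|B\nabla B\|_{L^2}$ through \eqref{bound on nabla F}--\eqref{bound on omega}, with the $L^4$ norm of $P-\tilde P$ handled by \eqref{bound on L4 of rho T>1}; on the $B$-side, one writes $\Delta B=\nu^{-1}(B_t+\divv(Bu^T-uB^T))$ and interpolates in the same way. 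Inserting the $L^\infty$ bound \eqref{L infty norm on u and B T>1} to absorb the cross terms $\|B\|_{L^\infty}\|\nabla B\|_{L^2}$ and $\|u\|_{L^\infty}\|\nabla B\|_{L^2}$ yields an estimate of the shape
\[
\cH(t) \le C\bigl(C_0 + C_0^\theta\,\cA(t)^{\alpha_0}\bigr)
\]
with some $\alpha_0<1$. The mixed integrals $\intoxtl|B|^2|B_t|^2$, $\intoxtl|B|^2|\dot u|^2$ and $\intoxtl|u|^2|\dot u|^2$ are handled by combining the $L^8$ bound \eqref{L8 norm on u and B T>1} with Hölder and the dissipation of $\dot u, B_t$ already present on the left, exactly as in the calculation that produced \eqref{bound on A1}.

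Combining these estimates gives a nonlinear inequality of the form
\[
\cA(t) \le CC_0^\theta + C\bigl(C_0^\theta+\cA(t)^{\alpha_0}\bigr)\cA(t)^{\beta_0},
\]
with positive exponents and every high-order occurrence of $\cA(t)$ either prefactored by a positive power of $C_0$ or of total degree strictly less than one. A standard continuity argument in $t$, starting from $\cA(1)\le CC_0^\theta$ supplied by \eqref{bound on A(t) t<1}, then forces \eqref{bound on A(t) t>1}. The main obstacle is precisely this feedback loop: because \eqref{L infty norm on u and B T>1} expresses $\|u\|_{L^\infty}+\|B\|_{L^\infty}$ in terms of $\cA(t)^{1/2}+\cA(t)^2$, every $|B|^2$ or $|u|^2$ factor in the mixed nonlinear terms reintroduces powers of $\cA(t)$ that threaten to dominate the dissipation, and it is only the smallness of $C_0$, filtered through \eqref{L4 norm on u and B T>1}--\eqref{L8 norm on u and B T>1} and \eqref{bound on L4 of rho T>1}, that prevents the loop from closing unfavourably.
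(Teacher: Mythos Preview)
Your overall architecture matches the paper's proof: derive the $H^1$- and $H^2$-level energy identities on $[1,t]$, collect the nonlinear error terms into $\cH(t)=\intoxtl(|\nabla u|^4+|\nabla B|^4)$, estimate $\cH$ via the decomposition $\|\nabla u\|_{L^4}\lesssim\|F\|_{L^4}+\|\omega\|_{L^4}+\|P-\tilde P\|_{L^4}$ together with \eqref{bound on L4 of rho T>1} and the $L^\infty$ bound \eqref{L infty norm on u and B T>1}, and close by continuity from $\cA(1)\le CC_0^\theta$.

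There is, however, a genuine error in the shape you claim for the $\cH$ bound and hence in the closing mechanism. You assert $\cH(t)\le C\bigl(C_0+C_0^\theta\cA(t)^{\alpha_0}\bigr)$ with $\alpha_0<1$, and then say the final inequality has every bad term ``either prefactored by a positive power of $C_0$ or of total degree strictly less than one''. Neither statement is correct. When you insert \eqref{L infty norm on u and B T>1} into $\sup\int|\nabla F|^2\lesssim\sup\int|\dot u|^2+\|B\|_{L^\infty}^2\sup\int|\nabla B|^2$, the factor $\|B\|_{L^\infty}^2$ contributes $(\cA^2+\cA^{1/2}+C_0^\theta)^2$, producing \emph{high} powers of $\cA$ with \emph{no} $C_0$ prefactor. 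The paper obtains $\cH\le C(\cA^{\bar\theta}+C_0^\theta)$ with $\bar\theta>1$, and the bare term $C\cH$ in the $H^2$ inequality therefore yields a bare $C\cA^{\bar\theta}$. The final inequality is
\[
\cA(t)\le C\bigl(\cA(t)^{\bar\theta}+C_0^\theta\bigr)+C\mathcal A(1),\qquad \bar\theta>1,
\]
and it is precisely the \emph{superlinearity} $\bar\theta>1$ that makes the continuity argument work: for $\cA$ small the term $C\cA^{\bar\theta}$ is dominated by $\tfrac12\cA$. By contrast, a term $C\cA^{\gamma}$ with $\gamma<1$ and no small prefactor would only give $\cA$ bounded by an $O(1)$ constant, not by $CC_0^\theta$. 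So your proposal has the right skeleton but the exponents in the $\cH$ estimate, and the logic of the absorption step, need to be reversed.
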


\begin{proof}
Following the similar argument given in the proof of Lemma~\ref{H1 estimates}, we have for $t\in[1,T]$ that
\begin{align*}
&\sup_{1\le\tau\le t}\intox(|\nabla u|^2+|\nabla B|^2)(x,\tau)dx+\intoxtl(|\dot{u}|^2+|B_t|^2)dxd\tau\\
&\le \mathcal{A}(1)+\intoxtl(|\nabla B|^2|B|^2+|\nabla u|^2|B|^2+|\nabla B|^2|u|^2)dxd\tau.
\end{align*}
Hence using \eqref{L4 norm on u and B T>1}, we obtain
\begin{align}\label{bound on auxiliary term T>1}
&\intoxtl(|\nabla B|^2|B|^2+|\nabla u|^2|B|^2+|\nabla B|^2|u|^2)dxd\tau\notag\\
&\le C\Big[\Big(\intoxtl|u|^4dxd\tau\Big)^\frac{1}{2}+\Big(\intoxtl|B|^4dxd\tau\Big)^\frac{1}{2}\Big]\notag\\
&\qquad\times\Big[\Big(\intoxtl|\nabla u|^4dxd\tau\Big)^\frac{1}{2}+\Big(\intoxtl|\nabla B|^4dxd\tau\Big)^\frac{1}{2}\Big]\notag\\
&\le CC_0^\frac{3}{4}\cA ^\frac{1}{4}\cH^\frac{1}{2},
\end{align}
and thus
\begin{align}\label{H1 estimate T>1}
&\sup_{1\le\tau\le t}\intox(|\nabla u|^2+|\nabla B|^2)(x,\tau)dx+\intoxtl(|\dot{u}|^2+|B_t|^2)dxd\tau\notag\\
&\le \mathcal{A}(1)+CC_0^\frac{3}{4}\cA ^\frac{1}{4}\cH^\frac{1}{2}.
\end{align}
On the other hand, following the argument given in the proof of Lemma~\ref{H2 estimates}, we have,  for $\in[1,T]$ that
\begin{align*}
&\sup_{1\le\tau\le t}\intox(|\dot{u}|^2+|B_t|^2)(x,\tau)dx+\intoxtl (|\nabla\dot{u}|^2+|\nabla B_t|^2)dxd\tau\notag\\
&\le \mathcal{A}(1)+C\intoxtl |B|^2|u|^2(|\nabla u|^2+|\nabla B|^2)dxd\tau\notag\\
&\qquad+C\intoxtl (|B|^2|B_t|^2+|u|^2|\dot{u}|^2+|B_t|^2|u|^2)dxd\tau\notag\\
&\qquad+C\intoxtl (|\nabla u|^4+|\nabla B|^4)dxd\tau.
\end{align*}
The term $\intoxtl |B|^2|u|^2(|\nabla u|^2+|\nabla B|^2)dxd\tau$ can be estimated as follows.
\begin{align*}
&\intoxtl |B|^2|u|^2(|\nabla u|^2+|\nabla B|^2)dxd\tau\\
&\le \intoxtl(|u|^8+|B|^8)dxd\tau+\intoxtl(|\nabla u|^4+|\nabla B|^4)dxd\tau.
\end{align*}
The term $\intoxtl(|u|^8+|B|^8)dxd\tau$ can be bounded using \eqref{L8 norm on u and B T>1}, and to bound the term $\intoxtl |B|^2|B_t|^2dxd\tau$, we can use \eqref{Lr bound general} and \eqref{L2 bound} to get
\begin{align*}
&\intoxtl |B|^2|B_t|^2dxd\tau\\
&\le \int_1^T\Big(\intox|B|^2dx\Big)^\frac{1}{4}\Big(\intox|\nabla B|^2dx\Big)^\frac{3}{4}\Big(\intox|B_t|^2dx\Big)^\frac{1}{4}\Big(\intox|\nabla B_t|^2dx\Big)^\frac{3}{4}d\tau\\
&\le CC_0^\frac{1}{2}\cA ^\frac{3}{2}.
\end{align*}
The terms $\intoxtl |B_t|^2|u|^2dxd\tau$ and $\intoxtl |B_t|^2|u|^2dxd\tau$ can be treated in a similar way which gives 
\begin{equation*}
\intoxtl (|B|^2|B_t|^2+|u|^2|\dot{u}|^2+|B_t|^2|u|^2)dxd\tau\le CC_0^\frac{1}{2}\cA ^\frac{3}{2}.
\end{equation*}
Therefore we have
\begin{align}\label{H2 estimate T>1}
&\sup_{1\le\tau\le t}\intox(|\dot{u}|^2+|B_t|^2)(x,\tau)dx+\intoxtl (|\nabla\dot{u}|^2+|\nabla B_t|^2)dxd\tau\notag\\
&\le \mathcal{A}(1)+CC_0^2\cA ^2+CC_0^\frac{1}{2}\cA ^\frac{3}{2}\cH+CC_0^\frac{1}{2}\cA ^\frac{3}{2}+C\cH.
\end{align}
It remains to estimate $\cH$. Using \eqref{L infty bound general}, \eqref{bound on nabla u}, \eqref{L2 bound} and \eqref{bound on L4 of rho T>1}, we have
\begin{align*}
\intoxtl|\nabla u|^4dxd\tau\le C\Big(\intoxtl|F|^4dxd\tau+\intoxtl|\omega|^4dxd\tau\Big)+CC_0.
\end{align*}
To control $\intoxtl|F|^4dxd\tau$, we use \eqref{Lr bound general} to get
\begin{align*}
&\intoxtl|F|^4dxd\tau\\
&\le C\int_1^T\Big(\intox|F|^2dx\Big)^\frac{1}{2}\Big(\intox|\nabla F|^2dx\Big)^\frac{3}{2}d\tau\\
&\le C\Big(\sup_{1\le \tau\le t}\int|\nabla F|^2dx\Big)^\frac{1}{2}\Big(\sup_{1\le \tau\le t}\int|F|^2dx\Big)^\frac{1}{2}\Big(\intoxtl|\nabla F|^2dxd\tau\Big)^\frac{1}{2}.
\end{align*}
The term $\Big(\sup_{1\le \tau\le t}\int|F|^2dx\Big)^\frac{1}{2}$ can be bounded by $C(\cA+C_0)^\frac{1}{2}$, and using \eqref{bound on auxiliary term T>1}, the term $\Big(\intoxtl|\nabla F|^2dxd\tau\Big)^\frac{1}{2}$ can be bound by
\begin{align*}
C\Big(\intoxtl|\dot{u}|^2dxd\tau+\intoxtl|\nabla B|^2|B|^2dxd\tau\Big)^\frac{1}{2}\le C\Big(\cA^\frac{1}{2}+C_0^\frac{3}{8}\cA^\frac{1}{8}\cH^\frac{1}{4}\Big).
\end{align*}
For $\Big(\sup_{1\le \tau\le t}\int|\nabla F|^2dx\Big)^\frac{1}{2}$, we use \eqref{L infty norm on u and B T>1} to otbain
\begin{align*}
\Big(\sup_{1\le \tau\le t}\int|\nabla F|^2dx\Big)^\frac{1}{2}&\le C\Big(\sup_{1\le \tau\le t}\int|\dot{u}|^2dx+\sup_{1\le \tau\le t}\int|\nabla B|^2|B|^2dx\Big)^\frac{1}{2}\\
&\le C\Big(\cA+\sup_{1\le \tau\le t}\|B(\cdot,\tau)\|^2_{L^\infty}\intox|\nabla B|^2dx\Big)^\frac{1}{2}\\
&\le C\Big(\cA+(\cA (t)^2+\cA (t)^\frac{1}{2}+C_0^{\theta})^2\cA\Big)^\frac{1}{2}
\end{align*}
The term $\intoxtl|\omega|^4dxd\tau$ can be estimated in a similar way, and we have
\begin{align*}
&\intoxtl|\nabla u|^4dxd\tau\\
&\le CC_0+C(\cA+C_0)^\frac{1}{2}\Big(\cA+(\cA (t)^2+\cA (t)^\frac{1}{2}+C_0^{\theta})^2\cA\Big)^\frac{1}{2}\Big(\cA^\frac{1}{2}+C_0^\frac{3}{8}\cA^\frac{1}{8}\cH^\frac{1}{4}\Big).
\end{align*}
We estimate $\intoxtl|\nabla u|^4$ in a similar way as the case for $\nabla u$ and get
\begin{align*}
&\intoxtl|\nabla B|^4dxd\tau\\
&\le C\int_1^T\Big(\intox|\nabla B|^2dx\Big)^\frac{1}{2}\Big(\intox|\Delta B|^2dx\Big)^\frac{3}{2}d\tau\\
&\le C\int_1^T\Big(\intox|\nabla B|^2dx\Big)^\frac{1}{2}\Big(\intox(|B_t|^2+|B|^2|\nabla u|^2+|\nabla B|^2|u|^2)dx\Big)^\frac{3}{2}d\tau\\
&\le C(\cA+C_0)^\frac{1}{2}\Big(\cA+(\cA (t)^2+\cA (t)^\frac{1}{2}+C_0^{\theta})^2\cA\Big)^\frac{1}{2}\Big(\cA^\frac{1}{2}+C_0^\frac{3}{8}\cA^\frac{1}{8}\cH^\frac{1}{4}\Big).
\end{align*}
Using Cauchy's inequality, there are constants $\bar{\theta}>1$ and $\theta>0$ such that
\begin{equation}\label{estimate on H T>1}
\cH\le C\Big(\cA^{\bar{\theta}}+C_0^{\theta}\Big).
\end{equation}
By combining \eqref{H1 estimate T>1}, \eqref{H2 estimate T>1}, \eqref{estimate on H T>1}, replacing $\bar{\theta}>1$ and $\theta>0$ if necessary, we conclude that, for all $t\in[1,T]$ that
\begin{equation*}
\cA(t)\le C\Big(\cA(t)^{\bar{\theta}}+C_0^{\theta}\Big)+C\mathcal{A}(1).
\end{equation*}
Using the bound \eqref{bound on A(t) t<1} on $\mathcal{A}(1)$ and making use of the smallness assumption on $C_0$, the result \eqref{bound on A(t) t>1} then follows.
\end{proof}

Combining \eqref{bound on A(t) t<1} and \eqref{bound on A(t) t>1}, under the assumption \eqref{pointwise bound on rho finite time} and $C_0\ll1$, we have for all $T>0$ that
\begin{equation}\label{bound on A(t) all t}
\mathcal{A}(T)\le CC_0^{\theta}.
\end{equation}

\subsection{Pointwise bound on $\rho$ and proof of Theorem~\ref{a priori bounds finite time theorem}}\label{Pointwise bound on rho subsection}

We now close the estimates on $\mathcal{A}$ by proving the following pointwise bounds on $\rho$. Together with \eqref{bound on A(t) all t}, it will finish the proof of Theorem~\ref{a priori bounds finite time theorem}.

\begin{lemma}\label{pointwise bound on density}
For $C_0\ll1$, we have the pointwise bounds for $\rho$, namely
\begin{align}\label{pointwise bound on rho}
\frac{1}{2}\rho_1\le\rho(x,t)\le 2\rho_2,
\end{align}
for all $t\in[0,T]$ and $x\in\R^3$.
\end{lemma}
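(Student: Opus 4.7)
The plan is to follow the integral-curve approach described in the introduction and adapted to the magnetic setting. For fixed $(x,t)$, let $y(\tau)$ denote the integral curve of $u$ with $y(t)=x$. Using the definition \eqref{definition of F} of $F$, the mass equation \eqref{MHD1} becomes
\begin{equation*}
(\mu+\lambda)\frac{d}{d\tau}\bigl[\log\rho(y(\tau),\tau)-\log\tilde\rho\bigr]+\bigl[P(\rho(y(\tau),\tau))-\tilde P\bigr]=-F(y(\tau),\tau).
\end{equation*}
Integrating from $0$ to $t$ and invoking the monotonicity \eqref{condition on pressure} of $P$, the $P-\tilde P$ integral has a definite sign that pulls $\log\rho$ back toward $\log\tilde\rho$. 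Concretely, if $\rho(x,t)>2\rho_2$ then the $P-\tilde P$ integral is bounded below by a positive constant depending only on $\rho_2$ and $\tilde\rho$, and similarly if $\rho(x,t)<\tfrac12\rho_1$. Hence it suffices to prove the uniform bound
\begin{equation*}
\sup_{x\in\R^3,\,0\le t\le T}\Big|\int_0^t F(y(\tau),\tau)\,d\tau\Big|\le C C_0^{\theta}
\end{equation*}
for some $\theta>0$, since then the smallness of $C_0$ forces $\rho$ to remain in $[\tfrac12\rho_1,2\rho_2]$.

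To establish this bound I would use the Poisson representation from \eqref{poisson in F}, writing $F=\Gamma_{x_j}*g^j$ with $g^j=\rho\dot u^j+(\tfrac12|B|^2)_{x_j}-\divv(B^jB)$. The crucial observation is the identity
\begin{equation*}
\rho\dot u^j=(\rho u^j)_t+\divv(\rho u^j u),
\end{equation*}
which follows from \eqref{MHD1}. This allows integration by parts in $\tau$ in the contribution of $\rho\dot u^j$ to $\int_0^t F(y(\tau),\tau)d\tau$, yielding boundary terms $\bigl[\Gamma_{x_j}*(\rho u^j)\bigr](y(\tau),\tau)\bigl|_0^t$ plus time integrals of $u\cdot\nabla[\Gamma_{x_j}*(\rho u^j)]$ and $\Gamma_{x_j}*\divv(\rho u^j u)$ along $y(\tau)$. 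The boundary and convective pieces reduce to Riesz-potential operators acting on $\rho u$ and $\rho u\otimes u$, which by standard Hardy-Littlewood-Sobolev and the $L^2$ and $L^6$ bounds of Lemmas~\ref{L2 estimate}--\ref{L6 estimate} are controlled uniformly in $x$ by positive powers of $C_0$.

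For the magnetic contributions $\int_0^t \Gamma_{x_j}*[(\tfrac12|B|^2)_{x_j}-\divv(B^jB)](y(\tau),\tau)d\tau$ one integrates by parts in $x$ to move the derivatives onto $\Gamma_{x_j}$, after which the integrand is a Riesz transform of $B\otimes B$; Calder\'on-Zygmund together with interpolation between \eqref{L2 bound} and \eqref{L6 bound} gives the required $C_0^{\theta}$ bound. Near $t=0$ the $s$-weighted estimates \eqref{H1 bound} and \eqref{H2 bound}, which require $s>\tfrac12$, supply enough integrability of $\|\dot u(\cdot,\tau)\|_{L^2}$ and $\|\nabla u(\cdot,\tau)\|_{L^2}$ to make the remainder integrals finite; for $\tau\ge 1$ the uniform bound \eqref{bound on A(t) t>1} of Lemma~\ref{estimates on A(t) t>1 lemma} is used.

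The main obstacle is twofold. First, one must verify that every intermediate estimate produces a \emph{strictly positive} power $\theta$ of $C_0$, because the pointwise bound \eqref{pointwise bound on rho} must be strictly inside the interval $[\tfrac12\rho_1,2\rho_2]$ in order to close the a priori argument; any factor like $C_0^0$ would prevent the continuation. Second, the magnetic coupling introduces new quadratic terms $B\nabla B$ in $\nabla F$ via \eqref{bound on nabla F} that are absent in Navier-Stokes, and these must be absorbed using the auxiliary bound \eqref{auxiliary bound 1} together with \eqref{L infty norm on u and B T>1} without destroying the $C_0^{\theta}$ dependence. Once all contributions are collected and $C_0$ is chosen sufficiently small, the elementary one-sided inequality derived from the $P-\tilde P$ dissipation delivers \eqref{pointwise bound on rho} uniformly on $\R^3\times[0,T]$, completing the proof and thereby closing the a priori estimates of Theorem~\ref{a priori bounds finite time theorem}.
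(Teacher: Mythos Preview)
Your proposal follows the same route as the paper's proof: reduce the pointwise density bound to controlling $\int F(y(\tau),\tau)\,d\tau$ along integral curves, invoke the Poisson representation $F=\Gamma_{x_j}*g^j$, and exploit the cancellation between the time integral and the material derivative via $\rho\dot u^j=(\rho u^j)_t+\divv(\rho u^j u)$ so that only lower-regularity quantities appear. The paper sketches exactly this argument, citing the H\"older estimate on $u$ (Lemma~\ref{Holder norm estimates}) for the $\rho\dot u$ contribution and then referring to \cite{suenhoff12} for the details.

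One point in your treatment of the magnetic contribution needs adjustment. You propose to move the spatial derivative from $B\cdot\nabla B$ onto $\Gamma_{x_j}$, producing a Calder\'on--Zygmund operator acting on $B\otimes B$, and then invoke CZ theory together with the $L^2$--$L^6$ interpolation. But CZ theory yields only $L^p\to L^p$ boundedness, not pointwise control, and what you need here is a bound on $F(y(\tau),\tau)$ at a single moving point. The paper instead keeps the term as $\Gamma_{x_j}*(B\cdot\nabla B)$, a Riesz potential of order~$1$, and bounds it in $L^\infty$ by splitting the kernel $|x|^{-2}$ into near and far parts and pairing against $\|B\cdot\nabla B(\cdot,\tau)\|_{L^4}$ and $\|B\cdot\nabla B(\cdot,\tau)\|_{L^2}$ respectively; this is why the proof singles out ``a time integral on $\|B\cdot\nabla B(\cdot,t)\|_{L^4}$.'' Your approach is easily repaired along these lines, but as written the CZ step does not deliver the sup-norm bound required.
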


\begin{proof}
The proof is reminiscent of the one given in \cite{suenhoff12} which consists of a maximum-principle argument applied along integral curves of the velocity field $u$, and we only give the sketch here. First we fix $y\in\R^3$ and define the corresponding integral curve of $u$ by
\begin{align}\label{integral curve of u}
\left\{ \begin{array}
{lr} \dot{x}(t)
=u(x(t),t)\\ x(0)=y.
\end{array} \right.
\end{align}
From the definition \eqref{definition of F} of $F$ and the mass equation \eqref{MHD1} that
\begin{align*}
(\mu+\lambda)\frac{d}{dt}[\log\rho(x(t),t)-\log(\tilde\rho)]+P(\rho(x(t),t))-\tilde P=-F(x(t),t).
\end{align*}
Integrating from $t_0$ to $t_1$ for $t_1,t_2\in[0,T]$, and abbreviating $\rho(x(t),t)$ by $\rho(t)$, etc., we then obtain
\begin{align}\label{integral on F}
(\mu+\lambda)[\log\rho(s)-\log(\tilde\rho)]\Big |_{ t_0}^{t_1}+\int_{ t_0}^{t_1}[P(\tau)-\tilde P]d\tau=-\int_{ t_0}^{t_1}F(\tau)d\tau.
\end{align}
Since $P$ is increasing, the integral of $P$ on the left side of \eqref{integral on F} is thus a dissipative term. On the other hand, if we can prove that
\begin{align}\label{smallness on integral of F}
\int_{ t_0}^{t_1}F(\tau)d\tau\le CC_0^{\theta'},
\end{align}
for some $\theta'>0$, then by stipulating the smallness condition on $C_0$, we can see that the density $\rho$ should remain inside the interval $[\frac{1}{2}\rho_1,2\rho_2]$ for all $t\in[0,T]$, provided that the initial density satisfies $\rho_0(x)\in[\rho_1,\rho_2]$ for $x\in\R^3$. To see why \eqref{smallness on integral of F} holds, using the Poisson equation \eqref{poisson in F}, we can rewrite the integral of $F$ as 
\begin{align*}
\int_{ t_0}^{t_1}F(\tau)d\tau&=\int_{ t_0}^{t_1}(\Gamma_{x_j}*g^j)(\cdot,\tau)(x(\tau))d\tau\notag\\
&=\int_{ t_0}^{t_1}\!\!\!\int_{\R^3}\Gamma_{x_j}(x(\tau)-y)\rho\dot{u}^{j}(y,\tau)dyd\tau\notag\\
&\qquad-\int_{ t_0}^{t_1}\!\!\!\int_{\R^3}\Gamma_{x_j}(x(\tau)-y)\left[\divv(BB^{j}(y,\tau))-B\cdot B_{x_j}(y,\tau)\right]dyd\tau.
\end{align*}
The integrals on the right here can be bounded using the estimates on the H\"{o}lder's norm of $u$ (which are given in Lemma~\ref{Holder norm estimates}) and a time integral on $\|B\cdot\nabla B(\cdot,t)\|_{L^4}$, and hence \eqref{smallness on integral of F} holds. The argument can be made rigorous and we refer to \cite{suenhoff12} for more details.
\end{proof}

\section{Proof of Theorem~\ref{Existence theorem}}\label{proof of existence section}

In this section, we give the proof of Theorem~\ref{Existence theorem}. We fix the constants $d$ and $C$ defined in Theorem~\ref{a priori bounds finite time theorem}. We let initial data $(\rho_0,u_0,B_0)$ be given satisfying the hypotheses \eqref{bound on initial density}-\eqref{definition of C0} and take $(\rho^\varepsilon_0,u^\varepsilon_0,B^\varepsilon_0)$ to be smooth approximate initial data obtained by convolving $(\rho_0,u_0,B_0)$ with a standard mollifying kernel of width $\varepsilon>0$. Then by Theorem~\ref{local existence theorem} as described in Section~\ref{a priori estimates}, for each $\varepsilon$, under the smallness assumption \eqref{definition of C0} on $C_0$ there is a corresponding smooth local solution $(\rho^\varepsilon,u^\varepsilon,B^\varepsilon)$ satisfying the bounds \eqref{bound on weak solution finite time}-\eqref{pointwise bound on rho finite time}. 

We recall the following theorem which shows that the smooth local solution $(\rho^\varepsilon,u^\varepsilon,B^\varepsilon)$ as described above exists on all of $\R^3\times [0,\infty)$. The proof can be found in \cite{suenhoff12} pp. 51--56.

\begin{theorem}\label{global smooth existence theorem}
Assume that the system parameters in \eqref{MHD1}-\eqref{MHD4} satisfy the conditions in \eqref{condition on vis}-\eqref{condition on pressure}, and let $d$, $\theta$, $C$ be as described in Theorem~\ref{Existence theorem}. Then given initial data $(\rho_0-\tilde\rho,u_0,B_0)\in H^3(\R^3)$ satisfying \eqref{bound on initial density}-\eqref{definition of C0}, the corresponding smooth solution $(\rho,u,B)$ described in Theorem~\ref{local existence theorem} exists on all of $\R^3\times [0,\infty)$.
\end{theorem}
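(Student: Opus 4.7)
The plan is a standard continuation argument exploiting the uniform a priori estimates. Let $T^\ast \in (0,\infty]$ denote the maximal time of existence of the smooth local solution $(\rho,u,B)$ guaranteed by Theorem~\ref{local existence theorem}, applied to the smooth data $(\rho_0-\tilde\rho,u_0,B_0)\in H^3(\R^3)$. The goal is to show $T^\ast=\infty$; I argue by contradiction, assuming $T^\ast<\infty$ and showing that the full $H^3$ norm of $(\rho-\tilde\rho,u,B)$ stays bounded as $t\to T^\ast$, so that Theorem~\ref{local existence theorem} can be re-applied near $T^\ast$ to continue the solution, contradicting maximality.

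The first ingredient is Theorem~\ref{a priori bounds finite time theorem}, which applies to $(\rho,u,B)$ on every $[0,T]$ with $T<T^\ast$ and delivers
\[
\mathcal{A}(T)\le CC_0^\theta,\qquad \tfrac12\rho_1\le\rho(x,t)\le 2\rho_2,
\]
with constants independent of $T$ and of the higher-order norms of the data. These provide uniform control on the low-order Sobolev norms of $(\rho-\tilde\rho,u,B)$ and, crucially, keep $\rho$ strictly positive and uniformly bounded above on $[0,T^\ast)$, so that the momentum equation \eqref{MHD2} and the magnetic equation \eqref{MHD3} remain nondegenerate parabolic systems.

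With these uniform lower-order bounds in hand, the higher regularity is propagated by a classical energy argument: differentiate \eqref{MHD1}--\eqref{MHD3} up to three times in $x$, multiply by the matching derivatives of $(\rho-\tilde\rho,u,B)$, and integrate by parts. The transport term $\divv(\rho u)$, the magnetic pressure $\tfrac12|B|^2$, the tension $\divv(B^jB)$, and the cross-coupling $\divv(B^ju-u^jB)$ produce commutator and product terms that are estimated through the interpolation inequalities \eqref{Lr bound general}--\eqref{L infty bound general} together with the already-controlled quantities in $\mathcal{A}(T)$, particularly the time-weighted bounds on $\dot u$ and $B_t$. The outcome is a Gronw\"all-type inequality
\[
\frac{d}{dt}\Psi(t)\le C\,h(t)\,\bigl(1+\Psi(t)\bigr),\qquad \Psi(t):=\|(\rho-\tilde\rho,u,B)(\cdot,t)\|_{H^3}^2,
\]
with $h\in L^1_{\mathrm{loc}}([0,\infty))$, yielding $\sup_{[0,T]}\Psi<\infty$ for every $T<T^\ast$ and hence a finite limit as $t\to T^\ast$.

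The main obstacle is closing the third-order estimates for the coupled MHD system: the trilinear terms from $D^3(\divv(B^jB))$ and from the $\rho$-dependent coefficients arising when $D^3$ is applied to \eqref{MHD2} must be dominated by the diffusion contributions $\mu\|D^3\nabla u\|_{L^2}^2$ and $\nu\|D^3\nabla B\|_{L^2}^2$, modulo lower-order terms already controlled. This is precisely the kind of estimate carried out in the proof of Theorem~\ref{a priori bounds finite time theorem} at lower order and extended to third order following the blueprint of Suen--Hoff \cite{suenhoff12}. Once $\Psi$ is bounded on $[0,T^\ast]$, the limiting state $(\rho,u,B)(\cdot,T^\ast)\in H^3(\R^3)$ exists by the $C([0,T^\ast];H^3)$ regularity \eqref{smooth local solution 1}--\eqref{smooth local solution 2}, and Theorem~\ref{local existence theorem} restarted at this state extends the solution past $T^\ast$, yielding the desired contradiction so that $T^\ast=\infty$.
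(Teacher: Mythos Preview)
The paper does not actually prove this theorem in the text; it simply records that ``The proof can be found in \cite{suenhoff12} pp.~51--56.'' Your proposal sketches exactly the continuation argument carried out there: assume a finite maximal existence time, invoke the uniform low-order estimates and density bounds of Theorem~\ref{a priori bounds finite time theorem}, then run the $H^3$ energy estimates (Kawashima-style) to propagate higher regularity via Gronw\"all and restart. So your approach is correct and coincides with what the paper, by citation, invokes.

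One minor caution on the sketch: the integrability of the Gronw\"all coefficient $h(t)$ near $t=0$ should not be justified via the time-weighted quantities in $\mathcal{A}(T)$, since those weights $\sigma^{1-s}$, $\sigma^{2-s}$ are tailored to rough data and vanish at $t=0$. For $H^3$ data the local theory of Theorem~\ref{local existence theorem} already gives unweighted bounds on $[0,\tau_0]$ for some $\tau_0>0$, so the continuation argument is only needed on $[\tau_0,T^\ast)$, where the weights are harmless. This is how the argument in \cite{suenhoff12} proceeds, and your sketch is compatible with it once this is made explicit.
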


Using the estimates \eqref{bound on weak solution finite time}-\eqref{pointwise bound on rho finite time} from Theorem~\ref{a priori bounds finite time theorem}, we obtain bounds on $(\rho^\varepsilon,u^\varepsilon,B^\varepsilon)$ which will provide the compactness needed to extract the desired global-in-time weak solution $(\rho,u,B)$ in the limit as $\varepsilon\to 0$. We first derive a result on the H\"{o}lder-continuity of $u^\varepsilon(\cdot,t)$, $B^\varepsilon(\cdot,t)$, $F^\varepsilon(\cdot,t)$ and $\omega^\varepsilon(\cdot,t)$ (here $F^\varepsilon$ and $\omega^\varepsilon$ are defined in terms of $\rho^\varepsilon$, $u^\varepsilon$ and $B^\varepsilon$).

\begin{lemma}\label{Holder norm estimates}
For $\alpha\in(0,\frac{1}{2}]$ and $t\in(0,\infty)$, we have the following estimates on the H\"{o}lder's norms on $u^\varepsilon$, $B^\varepsilon$, $F^\varepsilon$ and $\omega^\varepsilon$: for any $\varepsilon>0$, we have
\begin{align}\label{Holder norm}
\langle u^\varepsilon(\cdot,t)\rangle^{\alpha}+\langle B^\varepsilon(\cdot,t)\rangle^{\alpha}+\langle F^\varepsilon(\cdot,t)\rangle^{\alpha}+\langle \omega^\varepsilon(\cdot,t)\rangle^{\alpha}\le C(t)C_0^{\theta},
\end{align}
for some $\theta>0$ and $C(t)>0$ may depend on $t$ but independent of $\varepsilon$.
\end{lemma}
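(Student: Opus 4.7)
The plan is to deduce all four Hölder seminorms from the Morrey-type embedding \eqref{holder bound general}: for $\alpha \in (0, \tfrac{1}{2}]$ and $r = 3/(1-\alpha) \in (3, 6]$, one has $\langle w\rangle^\alpha \le C(r)\|\nabla w\|_{L^r(\R^3)}$. Each $L^r$-norm will then be controlled pointwise in $t$ using Lemmas~\ref{estimate on effective viscous flux} and \ref{estimate on nabla u lemma} together with the a priori bound $\mathcal{A}(T) \le CC_0^\theta$ and the pointwise density bound \eqref{pointwise bound on rho finite time} from Theorem~\ref{a priori bounds finite time theorem}.

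For $u^\varepsilon$, Lemma~\ref{estimate on nabla u lemma} with $p = r$ yields
\[
\|\nabla u^\varepsilon\|_{L^r} \le C\bigl(\|F^\varepsilon\|_{L^r} + \|\omega^\varepsilon\|_{L^r} + \|P(\rho^\varepsilon) - \tilde P\|_{L^r}\bigr).
\]
The pressure term is bounded by interpolating \eqref{L2 bound} with the $L^\infty$-bound \eqref{pointwise bound on rho finite time}. For $\|F^\varepsilon\|_{L^r}$ (and the analogous $\|\omega^\varepsilon\|_{L^r}$), I apply \eqref{Lr bound general} to interpolate between $\|F^\varepsilon\|_{L^2} \le CC_0^{1/2}$ (via Lemma~\ref{estimate on effective viscous flux} with $p=2$ and \eqref{L2 bound}) and $\|\nabla F^\varepsilon\|_{L^2} \le C(\|\dot u^\varepsilon\|_{L^2} + \|B^\varepsilon\nabla B^\varepsilon\|_{L^2})$. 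The pointwise-in-$t$ bound $\|\dot u^\varepsilon(\cdot,t)\|_{L^2} \le CC_0^{\theta/2}\sigma(t)^{-(2-s)/2}$ comes from the term $\sigma^{2-s}\|\dot u\|_{L^2}^2$ inside $\mathcal{A}$, while $\|B^\varepsilon\nabla B^\varepsilon\|_{L^2}$ is dominated by an $L^3$-$L^6$ Hölder split, both factors being controlled by the $H^1$-bounds furnished by $\mathcal{A}$.

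The case of $B^\varepsilon$ is parallel: I interpolate $\|\nabla B^\varepsilon\|_{L^r}$ between $\|\nabla B^\varepsilon\|_{L^2}$ and $\|\Delta B^\varepsilon\|_{L^2}$ via \eqref{Lr bound general}, and use the magnetic field equation \eqref{MHD3} in the form $\nu\Delta B^\varepsilon = B_t^\varepsilon + \divv(B^\varepsilon\otimes u^\varepsilon - u^\varepsilon\otimes B^\varepsilon)$ to bound $\|\Delta B^\varepsilon(\cdot,t)\|_{L^2}$ pointwise by $\|B_t^\varepsilon(\cdot,t)\|_{L^2}$ (available from $\mathcal{A}$) plus nonlinear terms handled by Hölder. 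The Hölder seminorms of $F^\varepsilon$ and $\omega^\varepsilon$ themselves follow by applying \eqref{holder bound general} directly with $w = F^\varepsilon$ or $\omega^\varepsilon$ and invoking Lemma~\ref{estimate on effective viscous flux} (and its $\omega$-analogue) with $p = r$, reducing everything to controlling $\|\dot u^\varepsilon\|_{L^r}$ and $\|B^\varepsilon\nabla B^\varepsilon\|_{L^r}$ at each fixed $t$.

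The main obstacle is the endpoint $\alpha = \tfrac{1}{2}$ (i.e.\ $r = 6$) for $\langle F^\varepsilon\rangle^\alpha$ and $\langle \omega^\varepsilon\rangle^\alpha$: the required bound on $\|\dot u^\varepsilon\|_{L^6}$ calls for pointwise-in-$t$ control of $\|\nabla\dot u^\varepsilon\|_{L^2}$, whereas $\mathcal{A}$ only furnishes this quantity inside a time integral weighted by $\sigma^{2-s}$. For $\alpha$ strictly below $\tfrac{1}{2}$, the interpolation exponent $(3r-6)/(2r) < 1$ lets the resulting power of $\|\nabla\dot u^\varepsilon\|_{L^2}$ be absorbed into a $t$-dependent constant obtained from the a.e.-finite envelope of the time-integrand in $\mathcal{A}$. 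At the exact endpoint, I would instead exploit the identity $F^\varepsilon = (\mu+\lambda)\divv(u^\varepsilon) - (P(\rho^\varepsilon) - \tilde P)$ together with the integral representation $F^\varepsilon = \Gamma_{x_j}\ast g^{\varepsilon,j}$ from \eqref{poisson in F} and Riesz-potential Hölder estimates, transferring the $C^{1/2}$ bound just obtained for $u^\varepsilon$ to $F^\varepsilon$ and $\omega^\varepsilon$ via the pointwise bound on $\rho^\varepsilon$. All bounds are pointwise in $t$, uniform in $\varepsilon$, and of the required form $C(t)C_0^\theta$.
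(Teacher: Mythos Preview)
Your approach is essentially the same as the paper's: apply the Morrey embedding \eqref{holder bound general}, reduce $\|\nabla u^\varepsilon\|_{L^r}$ to $\|F^\varepsilon\|_{L^r}+\|\omega^\varepsilon\|_{L^r}+\|P-\tilde P\|_{L^r}$ via Lemma~\ref{estimate on nabla u lemma}, interpolate $\|F^\varepsilon\|_{L^r}$ between $\|F^\varepsilon\|_{L^2}$ and $\|\nabla F^\varepsilon\|_{L^2}$ using \eqref{Lr bound general}, and close with the pointwise-in-$t$ information supplied by $\mathcal{A}(T)\le CC_0^\theta$. The paper carries this out only for $u^\varepsilon$ and says ``the others are just similar''; your write-up for $B^\varepsilon$ (via $\nu\Delta B^\varepsilon=B^\varepsilon_t+\divv(B^\varepsilon\otimes u^\varepsilon-u^\varepsilon\otimes B^\varepsilon)$) fills in exactly that gap in the same spirit.

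You are right to flag the endpoint $\alpha=\tfrac12$ for $\langle F^\varepsilon\rangle^\alpha$ and $\langle\omega^\varepsilon\rangle^\alpha$: at $r=6$ one needs $\|\nabla F^\varepsilon(\cdot,t)\|_{L^6}$, hence $\|\dot u^\varepsilon(\cdot,t)\|_{L^6}$, hence pointwise control of $\|\nabla\dot u^\varepsilon(\cdot,t)\|_{L^2}$, which $\mathcal{A}$ does not furnish uniformly in $\varepsilon$. The paper simply does not address this. However, your proposed workaround is not convincing: writing $F^\varepsilon=(\mu+\lambda)\divv u^\varepsilon-(P(\rho^\varepsilon)-\tilde P)$ does not help, since the $C^{1/2}$ bound you have is on $u^\varepsilon$, not on $\divv u^\varepsilon$, and $P(\rho^\varepsilon)$ is only $L^\infty$; the Riesz-potential representation $F^\varepsilon=\Gamma_{x_j}*g^{\varepsilon,j}$ likewise leads back to the same missing $L^6$ bound on $\dot u^\varepsilon$. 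In practice the paper only uses the $F^\varepsilon,\omega^\varepsilon$ H\"older bounds with exponents strictly below $\tfrac12$ (the $\alpha=\tfrac12$ case is invoked only for $u^\varepsilon,B^\varepsilon$ in Proposition~\ref{Holder continuity proposition}), so the statement as applied is unaffected; but as written, the endpoint for $F^\varepsilon,\omega^\varepsilon$ remains unproved in both the paper and your proposal.
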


\begin{proof}
We only give the proof for $u^\varepsilon$ and the others are just similar. By the estimates \eqref{holder bound general} and \eqref{bound on nabla u},
\begin{align*}
\langle u^\varepsilon(\cdot,t)\rangle^{\alpha}\le C\left[||F^\varepsilon(\cdot,t)||_{L^r}+||\omega^\varepsilon(\cdot,t)||_{L^r}+||(\rho^\varepsilon-\tilde\rho)(\cdot,t)||_{L^r}\right].
\end{align*}
On the other hand, by \eqref{Lr bound general}, we have
\begin{align*}
\|F^\varepsilon(\cdot,t)\|_{L^r}&\le C\left(\|F^\varepsilon(\cdot,t)\|_{L^2}^{(6-r)/2r}\|\nabla F^\varepsilon(\cdot,t)\|_{L^2}^{(3r-6)/2r}\right)\\
&\le C\left(||(\rho^\varepsilon-\tilde\rho)(\cdot,t)||^2_{L^2}+||\nabla u^\varepsilon(\cdot,t)||^2_{L^2}\right)^\frac{1-2\alpha}{4}\\
&\qquad\times\left(||\dot{u}^\varepsilon(\cdot,t)||^2_{L^2}+||\nabla B^\varepsilon\cdot B^\varepsilon(\cdot,t)||^2_{L^2}\right)^\frac{1+2\alpha}{4}.
\end{align*}
The desired result then follows by applying the estimates \eqref{bound on weak solution finite time}-\eqref{pointwise bound on rho finite time}.
\end{proof}

Next we recall the following result on the H\"{o}lder-continuity of $u^\varepsilon(\cdot,t)$, $B^\varepsilon(\cdot,t)$ in $x$ and $t$. It will be useful in obtaining uniform convergence of approximation solutions later. The proof can be found in \cite{suenhoff12} pp. 56.

\begin{proposition}\label{Holder continuity proposition}
Given $\tau>0$ there is a constant $C=C(\tau)$ independent of $\varepsilon$ such that,
\begin{align}\label{Holder estimates in space time}
\langle u^\varepsilon\rangle^{\frac{1}{2},\frac{1}{4}}_{\R^3\times [\tau,\infty)},\ \langle B^\varepsilon\rangle^{\frac{1}{2},\frac{1}{4}}_{\R^3\times [\tau,\infty)}\le C(\tau)C_0^\theta.
\end{align}
\end{proposition}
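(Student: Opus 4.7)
The plan is to combine the spatial H\"older estimate from Lemma~\ref{Holder norm estimates} with a spatial-averaging argument to upgrade it to a joint space--time H\"older bound. Fix $\tau>0$ and points $(x,t_1),(x,t_2)\in\R^3\times[\tau,\infty)$ with $t_1<t_2$. Taking $\alpha=1/2$ in Lemma~\ref{Holder norm estimates} gives $\langle u^\varepsilon(\cdot,t)\rangle^{1/2}\le C(\tau)C_0^\theta$ uniformly on $[\tau,\infty)$, so only the temporal bound $|u^\varepsilon(x,t_2)-u^\varepsilon(x,t_1)|\le C(\tau)C_0^\theta|t_2-t_1|^{1/4}$ remains; if $|t_2-t_1|\ge 1$ this is immediate from the $L^\infty$ bound, so we may assume $|t_2-t_1|\le 1$.

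Set $r=|t_2-t_1|^{1/2}\le 1$ and let $\bar u^\varepsilon(t):=|B_r(x)|^{-1}\int_{B_r(x)}u^\varepsilon(y,t)\,dy$. The decomposition
$$u^\varepsilon(x,t_2)-u^\varepsilon(x,t_1)=\bigl[u^\varepsilon(x,t_2)-\bar u^\varepsilon(t_2)\bigr]+\bigl[\bar u^\varepsilon(t_2)-\bar u^\varepsilon(t_1)\bigr]+\bigl[\bar u^\varepsilon(t_1)-u^\varepsilon(x,t_1)\bigr]$$
bounds the outer two terms by $\langle u^\varepsilon(\cdot,t_k)\rangle^{1/2}r^{1/2}\le C(\tau)C_0^\theta|t_2-t_1|^{1/4}$. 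For the middle term I use $u_t^\varepsilon=\dot u^\varepsilon-(u^\varepsilon\cdot\nabla)u^\varepsilon$ together with Cauchy--Schwarz in space to get
$$|\bar u^\varepsilon(t_2)-\bar u^\varepsilon(t_1)|\le\frac{1}{|B_r|}\int_{t_1}^{t_2}\!\!\int_{B_r(x)}|u_t^\varepsilon(y,\eta)|\,dy\,d\eta\le\frac{1}{|B_r|^{1/2}}\int_{t_1}^{t_2}\|u_t^\varepsilon(\cdot,\eta)\|_{L^2(\R^3)}\,d\eta.$$

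Granting the uniform bound $\|u_t^\varepsilon(\cdot,\eta)\|_{L^2}\le C(\tau)C_0^\theta$ for $\eta\ge\tau$, the right-hand side is at most $|B_r|^{-1/2}(t_2-t_1)C(\tau)C_0^\theta=C(\tau)C_0^\theta|t_2-t_1|^{1/4}$, since $|B_r|^{1/2}$ is comparable to $|t_2-t_1|^{3/4}$. For the uniform $L^2$-bound, $\|\dot u^\varepsilon(\cdot,\eta)\|_{L^2}$ is controlled directly by \eqref{bound on weak solution finite time} since the weight $\sigma^{2-s}$ is bounded below on $[\tau,\infty)$; the convective term is estimated by $\|(u^\varepsilon\cdot\nabla)u^\varepsilon\|_{L^2}\le\|u^\varepsilon\|_{L^\infty}\|\nabla u^\varepsilon\|_{L^2}$, where $\|\nabla u^\varepsilon\|_{L^2}$ is again controlled by the $\sigma^{1-s}$-weighted part of \eqref{bound on weak solution finite time}, and $\|u^\varepsilon\|_{L^\infty}$ is handled via \eqref{L infty bound general} together with \eqref{L6 bound} and \eqref{bound on nabla u} on $[\tau,1]$ and via \eqref{L infty norm on u and B T>1} on $[1,\infty)$. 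The argument for $B^\varepsilon$ is parallel: we use the equation $B^\varepsilon_t=\nu\Delta B^\varepsilon-\divv(B^\varepsilon u^\varepsilon-u^\varepsilon B^\varepsilon)$ in place of $u_t^\varepsilon$, and the $L^2$-bound on $B^\varepsilon_t$ follows directly from \eqref{bound on weak solution finite time}.

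The main obstacle I anticipate is the uniform-in-$\varepsilon$ $L^\infty$-bound on $u^\varepsilon$ and $B^\varepsilon$ on the short-time interval $[\tau,1]$: the $\sigma$-weights in \eqref{bound on weak solution finite time} have not yet saturated there, so some care is required in converting the weighted $H^1$- and second-derivative-type bounds into an unweighted $W^{1,q}$-bound (with $q>3$) whose constant depends only on $\tau$. Once this $L^\infty$ bound is in place, the rest of the proof is the direct scaling computation with $r=|t_2-t_1|^{1/2}$ displayed above.
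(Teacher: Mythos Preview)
Your proposal is correct and follows the standard spatial-averaging route for upgrading a pointwise-in-time spatial H\"older bound to a joint space--time H\"older bound; this is exactly the technique used in the reference \cite{suenhoff12} to which the paper defers (the paper itself does not reproduce the argument). The scaling $r=|t_2-t_1|^{1/2}$ is the right parabolic choice, and your decomposition and use of $\|u^\varepsilon_t(\cdot,\eta)\|_{L^2}$ are sound.

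One small simplification: the ``main obstacle'' you flag --- the uniform $L^\infty$ bound on $u^\varepsilon$, $B^\varepsilon$ on $[\tau,1]$ --- is not really an issue once Lemma~\ref{Holder norm estimates} is in hand. Indeed, a function with $\langle w\rangle^{1/2}\le A$ and $\|w\|_{L^2}\le B$ automatically satisfies $\|w\|_{L^\infty}\le C(A+B)$: compare $w(x)$ to its average over $B_1(x)$. So you can bypass the separate $W^{1,q}$ discussion and the appeal to \eqref{L infty norm on u and B T>1}, and simply combine the spatial H\"older bound from Lemma~\ref{Holder norm estimates} with the $L^2$ energy bound \eqref{L2 bound} to get the required $L^\infty$ control uniformly on $[\tau,\infty)$.
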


The compactness of the approximate solutions $(\rho^\varepsilon,u^\varepsilon,B^\varepsilon)$ can then be summarised in the following lemma.

\begin{lemma}\label{Compactness lemma}
There is a sequence $\varepsilon_k\to 0$ and functions $u, B$ and $\rho$ such that as $k\to\infty$,
\begin{align}\label{strong convergence on u and B}
\mbox{ $u^{\varepsilon_k},B^{\varepsilon_k}\rightarrow u,B$ uniformly on compact sets in $\R^3\times (0,\infty)$};\end{align}
\begin{align}\label{weak convergence 1}
\nabla u^{\varepsilon_k}(\cdot,t),\nabla B^{\varepsilon_k}(\cdot,t),\nabla\omega^{\varepsilon_k}(\cdot,t)\rightharpoonup\nabla u(\cdot,t),\nabla B(\cdot,t),\nabla\omega(\cdot,t)
\end{align}
weakly in $L^2(\R^3)$
for all $t>0$; 
\begin{align}\label{weak convergence 2}
&\mbox{$\sigma^{\frac{1-s}{2}}\dot{u}^{\varepsilon_k},\sigma^{\frac{1-s}{2}}B_t^{\varepsilon_k},\sigma^{\frac{2-s}{2}}\nabla\dot{u}^{\varepsilon_k},\sigma^{\frac{2-s}{2}}\nabla B_t^{\varepsilon_k}\rightharpoonup\sigma^{\frac{1-s}{2}}\dot{u},\sigma^{\frac{1-s}{2}}B_t,\sigma^{\frac{2-s}{2}}\nabla\dot{u},\sigma^{\frac{2-s}{2}}\nabla B_t$}
\end{align}
weakly in $L^2(\R^3\times[0,\infty))$; and
\begin{align}\label{strong convergence on rho}
\rho^{\varepsilon_k}(\cdot,t)\to \rho(\cdot,t)
\end{align}
strongly in $L^2_{loc}(\R^3)$ for every $t\ge 0$. Here $\sigma(t)=\min\{1,t\}$ and $s\in(\frac{1}{2},1]$.
\end{lemma}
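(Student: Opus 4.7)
The plan is to use the uniform-in-$\varepsilon$ bounds of Theorem~\ref{a priori bounds finite time theorem} together with the equicontinuity furnished by Proposition~\ref{Holder continuity proposition} and Lemma~\ref{Holder norm estimates} to extract limits by a combination of Arzelà--Ascoli (for the strong convergence of $u^\varepsilon,B^\varepsilon$) and Banach--Alaoglu (for the weak convergences of gradients and material derivatives), followed by a characteristic-based argument to upgrade the weak convergence of $\rho^\varepsilon$ to strong $L^2_{\mathrm{loc}}$ convergence.

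First I would fix a compact cylinder $K\times[\tau,T]\subset\R^3\times(0,\infty)$. Proposition~\ref{Holder continuity proposition} gives a uniform bound on the $C^{1/2,1/4}$-seminorm of $u^\varepsilon,B^\varepsilon$ on $\R^3\times[\tau,\infty)$, and combining this H\"older bound with the uniform $L^2(\R^3)$ bound from Lemma~\ref{L2 estimate} yields a uniform $L^\infty(K)$ bound via the elementary inequality $\|v\|_{L^\infty(B_r)}\lesssim r^{-3/2}\|v\|_{L^2}+r^{\alpha}\langle v\rangle^{\alpha}$ (optimising over $r$). Arzelà--Ascoli then produces a subsequence converging uniformly on $K\times[\tau,T]$; diagonal extraction along an exhaustion of $\R^3\times(0,\infty)$ by such cylinders gives a single sequence $\varepsilon_k\to 0$ with \eqref{strong convergence on u and B}.

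With $u^{\varepsilon_k}\to u$ and $B^{\varepsilon_k}\to B$ fixed, the weak convergences \eqref{weak convergence 1}--\eqref{weak convergence 2} follow from Banach--Alaoglu. Indeed, for each $t>0$ the $\mathcal{A}$-bound provides uniform $L^2(\R^3)$ control of $\nabla u^{\varepsilon_k}(\cdot,t), \nabla B^{\varepsilon_k}(\cdot,t)$, and \eqref{bound on omega} together with the $\mathcal{A}$-control of $\dot u^\varepsilon$ and $|B^\varepsilon||\nabla B^\varepsilon|$ provides uniform $L^2$ control of $\nabla\omega^{\varepsilon_k}(\cdot,t)$; extracting a weak limit along a further subsequence (and taking a diagonal in a dense set of times, which turns out to be unnecessary because the limit is determined distributionally by $u,B$) identifies the limits with $\nabla u(\cdot,t), \nabla B(\cdot,t), \nabla\omega(\cdot,t)$. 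For \eqref{weak convergence 2} the weighted quantities $\sigma^{(1-s)/2}\dot u^\varepsilon$, etc., are uniformly bounded in $L^2(\R^3\times[0,\infty))$ by $\mathcal{A}$; Banach--Alaoglu yields weak limits, whose identification as $\sigma^{(1-s)/2}\dot u$, etc., uses the uniform convergence of $u^{\varepsilon_k}$ together with the already-obtained weak convergence of $\nabla u^{\varepsilon_k}$.

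The main obstacle is the strong $L^2_{\mathrm{loc}}$ convergence \eqref{strong convergence on rho}: weak $L^2$ convergence of $\rho^{\varepsilon_k}$ follows from the uniform $L^\infty\cap L^\infty(L^2)$ bound on $\rho^\varepsilon-\tilde\rho$, but it is insufficient for passing to the limit in the nonlinear terms $\rho u\otimes u$ and $P(\rho)$ occurring in the momentum equation. My approach would be to exploit the ODE obtained from \eqref{definition of F} and \eqref{MHD1} along integral curves $x^{\varepsilon_k}(\cdot;y,0)$ of $u^{\varepsilon_k}$, namely
\begin{equation*}
(\mu+\lambda)\tfrac{d}{dt}\bigl[\log\rho^{\varepsilon_k}(x^{\varepsilon_k}(t),t)-\log\tilde\rho\bigr]+P(\rho^{\varepsilon_k}(x^{\varepsilon_k}(t),t))-\tilde P=-F^{\varepsilon_k}(x^{\varepsilon_k}(t),t).
\end{equation*}
The uniform H\"older continuity of $u^{\varepsilon_k}$ ensures, by a standard Osgood/Gronwall argument, that the flow maps $x^{\varepsilon_k}(\cdot;y,0)$ converge uniformly in $(y,t)$ on compact sets to the flow $x(\cdot;y,0)$ of $u$. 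Lemma~\ref{Holder norm estimates} provides a uniform spatial H\"older bound on $F^{\varepsilon_k}(\cdot,t)$ (with constants depending on $t$); combined with time-equicontinuity inherited from the identity above and the $L^2_tL^2_x$ control of $\dot u^\varepsilon$, we may pass (along a further subsequence) to a limit $F^{\varepsilon_k}\to F$ uniformly on compact subsets of $\R^3\times(0,\infty)$. Since $P$ is strictly increasing, the ODE becomes a contraction-like relation in $\log\rho$, so passing to the limit along characteristics identifies $\rho^{\varepsilon_k}(x(t),t)\to\rho(x(t),t)$ pointwise on compact sets. Combined with the uniform pointwise bound \eqref{pointwise bound on rho finite time}, dominated convergence then delivers \eqref{strong convergence on rho} for every $t>0$, while the case $t=0$ follows from the mollification $\rho_0^{\varepsilon_k}\to\rho_0$ in $L^2_{\mathrm{loc}}$ together with the $H^{-1}$-continuity of $\rho^\varepsilon-\tilde\rho$ (which is uniform in $\varepsilon$, by the continuity equation and uniform control of $\rho^\varepsilon u^\varepsilon$).
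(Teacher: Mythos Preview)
Your treatment of \eqref{strong convergence on u and B}, \eqref{weak convergence 1}, and \eqref{weak convergence 2} is essentially the same as the paper's: Arzel\`a--Ascoli via the H\"older bound of Proposition~\ref{Holder continuity proposition}, then Banach--Alaoglu together with identification of distributional derivatives.

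The genuine gap is in your argument for \eqref{strong convergence on rho}. Your plan relies on the convergence of the flow maps $x^{\varepsilon_k}(\cdot;y,0)\to x(\cdot;y,0)$, which you claim follows ``by a standard Osgood/Gronwall argument'' from the uniform spatial H\"older continuity of $u^{\varepsilon_k}$. But a H\"older modulus $\omega(r)=Cr^{1/2}$ does \emph{not} satisfy the Osgood condition $\int_0^1 dr/\omega(r)=\infty$, so neither uniqueness of integral curves for the limiting velocity $u$ nor convergence of the approximate flow maps is available from this alone. The sharper log-Lipschitz control on $u$ (via the decomposition $u=u_F+u_P$ and \eqref{bound on LL norm}) would rescue the argument, but that bound is derived only later in the proof of Theorem~\ref{Existence theorem} and requires the extra hypothesis~\eqref{piecewise holder for initial rho} on $\rho_0$, which is absent in the setting of Lemma~\ref{Compactness lemma}. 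A secondary difficulty is the claimed uniform convergence of $F^{\varepsilon_k}$: Lemma~\ref{Holder norm estimates} gives spatial equicontinuity, but your justification of time-equicontinuity via ``the identity above and the $L^2_tL^2_x$ control of $\dot u^\varepsilon$'' is not convincing, since that identity lives along characteristics and $F^\varepsilon=(\mu+\lambda)\divv u^\varepsilon-(P(\rho^\varepsilon)-\tilde P)$ involves precisely the two quantities ($\divv u^\varepsilon$ and $\rho^\varepsilon$) whose strong convergence is at issue.

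The paper sidesteps all of this by invoking the Lions--Feireisl compactness mechanism (renormalised continuity equation plus weak continuity of the effective viscous flux), which establishes $\overline{\rho\log\rho}=\bar\rho\log\bar\rho$ and hence strong $L^2_{\mathrm{loc}}$ convergence of $\rho^{\varepsilon_k}$ without any appeal to convergence of particle trajectories. That argument works under the bare $L^\infty\cap L^\infty_t L^2_x$ bounds on $\rho^\varepsilon-\tilde\rho$ already available here, which is why the paper can prove Lemma~\ref{Compactness lemma} before imposing~\eqref{piecewise holder for initial rho}.
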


\begin{proof} The uniform convergence \eqref{strong convergence on u and B} follows from the bound \eqref{Holder estimates in space time} on $u$ and $B$ via a diagonal process, thus fixing the sequence $\{\varepsilon_k\}$. The statements in  \eqref{weak convergence 1} and \eqref{weak convergence 2} then follow for this same sequence from \eqref{Holder estimates in space time} and considerations based on the equality of weak-$L^2$ derivatives and distribution derivatives. The strong convergence \eqref{strong convergence on rho} of $\rho^{\varepsilon_k}$ for a further subsequence requires an argument given in Lions \cite{lions98} which was later extended by Feireisl \cite{feireisl04}. We omit the details here.
\end{proof}

\begin{proof}[\bf Proof of Theorem~\ref{Existence theorem}]
In view of Theorem~\ref{a priori bounds finite time theorem}, Proposition~\ref{Holder continuity proposition} and Lemma~\ref{Compactness lemma}, the limiting functions $(\rho,u,B)$ of Lemma~\ref{Compactness lemma} inherit the bounds in \eqref{nabla u B in L2}-\eqref{bound on weak solution}. It is also clear from the modes of convergence described in Lemma~\ref{Compactness lemma} that $(\rho,u,B)$ satisfies the weak forms \eqref{weak sol 1}-\eqref{weak sol 3} of \eqref{MHD1}-\eqref{MHD4} as well as the initial condition \eqref{initial data}. The continuity statement \eqref{rho in H-1}-\eqref{u B in L2} then follows easily from these weak forms together with the bounds in \eqref{pointwise bound on rho theorem}-\eqref{bound on weak solution}. 

It remains to prove the bound \eqref{bound on time integral on nabla u} on the piecewise $C^{\beta(t)}$ modulus of $\rho(\cdot)$ and time integral of $\|\nabla u(\cdot,t)\|_{L^\infty}$. We fix $T>0$ and $N>0$ such that 
$$\|\rho^\varepsilon_0-\tilde\rho\|_{C^\beta_{pw}}\le N.$$

We first decompose $u^\varepsilon$ as $u^\varepsilon=u^\varepsilon_{F}+u^\varepsilon_{P}$, where $u^\varepsilon_{F}$, $u^\varepsilon_{P}$ satisfy
\begin{align}\label{decomposition of u}
\left\{
 \begin{array}{lr}
(\mu+\lambda)\Delta (u^\varepsilon_{F})^{j}=F^\varepsilon_{x_j} +(\mu+\lambda)(\omega^\varepsilon)^{j,k}_{x_k}\\
(\mu+\lambda)\Delta (u^\varepsilon_P)^{j}=(P^\varepsilon-P(\tilde{\rho}))_{x_j}.\\
\end{array}
\right.
\end{align}
To bound $u^\varepsilon_{F}$ we apply the estimate \eqref{L infty bound general} on $F^\varepsilon$ to obtain that, for any $r>3$ and $t\in[0,T]$,
\begin{align*}
\int_{0}^{t}||\nabla u^\varepsilon_{F}(\cdot,\tau)||_{\infty}d\tau\le C(r)\int_{0}^{t}\left[||\nabla u^\varepsilon_{F}(\cdot,\tau)||_{L^r}+||D_{x}^2 u^\varepsilon_{F}(\cdot,\tau)||_{L^r}\right]d\tau.
\end{align*}
The right side of the above can be controlled by the time integrals on $\|\rho^\varepsilon\dot{u}^\varepsilon(\cdot,t)\|_{L^r}$ and $\|B^\varepsilon\cdot\nabla B^\varepsilon(\cdot,t)\|_{L^r}$, which can be bounded by the bounds \eqref{H1 bound} and \eqref{H2 bound}. Therefore we obtain
\begin{equation}\label{time integral on uF}
\int_{0}^{t}||\nabla u^\varepsilon_{F}(\cdot,\tau)||_{\infty}d\tau\le C(t)C_0^{\theta}.
\end{equation}

On the other hand, the way in controlling $u^\varepsilon_{P}$ is a bit more subtle than that of $u^\varepsilon_{F}$. For $t\in[0,T]$, the logical flow in obtaining the desired bound on $\int_{0}^{t}||\nabla u^\varepsilon_{P}(\cdot,\tau)||_{\infty}d\tau$ can be outlined as follows:
\begin{enumerate}
\item[Step 1.] $P^\varepsilon(\cdot,t)-\tilde P$ is pointwisely bounded independently of time.
\item[Step 2.] $u^\varepsilon_P(\cdot,t)$ is log-Lipschitz with bounded log-Lipschitz seminorm.
\item[Step 3.] The integral curve $x^\varepsilon(y,t)$ as defined by 
\begin{align*}
\left\{ \begin{array}
{lr} \dot{x}^\varepsilon(t,y)
=u^\varepsilon(x^\varepsilon(t,y),t)\\ x^\varepsilon(0,y)=y,
\end{array} \right.
\end{align*}
is H\"{o}lder-continuous in $y$.
\item[Step 4.] $\rho^\varepsilon(\cdot,t)$ is piecewise $C^{\beta(t)}$ with exponent $\beta(t)$ and modulus suitably bounded in finite time.
\item[Step 5.] The $C^{\beta(t)+1}(\R^3)$ norm of $u^\varepsilon_P(\cdot,t)$ is finite in finite time.
\end{enumerate}

Step~1 can be easily accomplished by making use of the pointwise bound \eqref{bound on initial density} on the density. Then by using Step~1 and the Poisson equation \eqref{decomposition of u}$_2$ on $u^\varepsilon_{P}$, we can apply the results from Bahouri-Chemin \cite{bahouriChemin94} to show that Step~2 holds for $u^\varepsilon_{P}$ with
\begin{equation}\label{bound on LL norm}
|u^\varepsilon_P(y,t)-u^\varepsilon_P(z,t)|\le Cm(|y-z|),
\end{equation}
where $C$ is a constant which depends only on $\bar \rho$ and $\tilde\rho$, and $m$ is given by
\begin{align*}
m(z)=\left\{
 \begin{array}{lr}
z(1-\log(z)),\; &0<z\le1;\\
z,\;&1<z<\infty.\\
\end{array}
\right.
\end{align*}

For Step~3, we apply \eqref{bound on LL norm} to obtain:
\begin{align*}
&\big|\frac{d}{dt}|x^\varepsilon(t,y)-x^\varepsilon(t,z)|^2\big|\\
&\le|x^\varepsilon(t,y)-x^\varepsilon(t,z)||u^\varepsilon_P(x^\varepsilon(t,y),t)-u^\varepsilon_P(x^\varepsilon(t,z),t)|\\
&\qquad+|x^\varepsilon(t,y)-x^\varepsilon(t,z)||u^\varepsilon_{F}(x^\varepsilon(t,y),t)-u^\varepsilon_{F}(x^\varepsilon(t,z),t)|\\
&\le\left[C+||\nabla u^\varepsilon_{F}(\cdot,t)||_{\infty}\right]|x^\varepsilon(t,y)-x^\varepsilon(t,z)|^{2}\left[1-\log|x^\varepsilon(t,y)-x^\varepsilon(t,z)|^2\right].
\end{align*}
Upon integrating the above differential inequality in time and utilizing the bound \eqref{time integral on uF} on $u_F$, there exists $\beta_1(t)$, $\beta_2(t)$, $\tilde C(t)>0$ such that
\begin{equation}\label{Holder norm of x(t) 1}
 |x^\varepsilon(t,y)-x^\varepsilon(t,z)|\le \tilde C(t)|y-z|^{\beta_1(t)}
 \end{equation}
 and 
 \begin{equation}\label{Holder norm of x(t) 2}
|y-z|\le \tilde C(t)|x^\varepsilon(t,y)-x^\varepsilon(t,z)|^{\beta_2(t)}.
\end{equation}

Next we proceed to Step~4. Let $y,z\in\R^3$ which are both on the same side of $\mathcal{C}(t)$. Then there exists $y_0,z_0\in\R^3$ such that
\begin{align*}
\left\{ \begin{array}
{lr} x^\varepsilon(t,y_0)
=y\\ x^\varepsilon(t,z_0)=z,
\end{array} \right.
\end{align*}
and $y_0,z_0$ are both on the same side of $\mathcal{C}(0)$. Integrating the mass equation along integral curves $x^\varepsilon(t,y_0)$ and $x^\varepsilon(t,z_0)$, subtracting and recalling the definition \eqref{definition of F} of $F$, we obtain that
\begin{align}\label{Holder norm estimate of rho}
|\log\rho^\varepsilon&(y,t)-\log\rho^\varepsilon(z,t)|\\
&\le|\log\rho^\varepsilon_0(y_0)-\log\rho^\varepsilon_0(z_0)|
+\int_0^t| P(\rho^\varepsilon_0(x^\varepsilon(\tau,y_0),\tau))-P(\rho^\varepsilon(x^\varepsilon(\tau,z_0),\tau)|d\tau\notag \\
&\qquad\qquad\qquad\qquad\qquad\qquad\quad+\int_0^t|F^\varepsilon(x^\varepsilon(\tau,y_0),\tau)-F^\varepsilon(x^\varepsilon(\tau,z_0),\tau)|d\tau.\notag
\end{align}
The first term on the right can be bounded in terms of $N$, and since $P$ is increasing, the second term is dissipative and can be dropped out. With the help of the estimate \eqref{bound on nabla F} on $F$ and the bound \eqref{Holder norm of x(t) 1}, the third term on the right is bounded as follows:
\begin{align*}
\int_0^t&|F^\varepsilon(x^\varepsilon(\tau,y_0),\tau)-F^\varepsilon(x^\varepsilon(\tau,z_0),\tau)|d\tau\\
&\le\int_0^t||\nabla F^\varepsilon(\cdot,\tau)||_{L^r}(\tilde C(t)|y_0-z_0|^{\beta_1(t)})^\alpha d\tau\\
&\le C\int_0^t(\|\rho^\varepsilon\dot{u}^\varepsilon\|_{L^r}+\|B^\varepsilon\cdot\nabla B^\varepsilon\|_{L^r})(\tilde C(t)|y_0-z_0|^{\beta_1(t)})^\alpha d\tau,
\end{align*}
where $r>3$ and $\alpha=1-\frac{3}{r}$. Using the bounds in \eqref{bound on weak solution finite time}-\eqref{pointwise bound on rho finite time}, the term involving $\dot{u}^\varepsilon$ can be bounded by
\begin{align*}
&C\int_0^t(\|\dot{u}^\varepsilon(\cdot,\tau)\|^{\frac{1-\delta}{2}}_{L^2}\|\nabla\dot{u}^\varepsilon(\cdot,\tau)\|^{\frac{1-\delta}{2}}_{L^2})d\tau\\
&\le C\int_0^t \tau^\gamma\Big(\tau^{1-s}\intox|\dot{u}^\varepsilon|^2dx\Big)^\frac{1-\delta}{4}\Big(\tau^{2-s}\intox|\nabla\dot{u}^\varepsilon|^2dx\Big)^\frac{1+\delta}{4}d\tau\\
&\le C\Big(\int_0^t\tau^{2\gamma}d\tau\Big)^\frac{1}{2}C^{\bar{\theta}},
\end{align*}
for some $\delta>0$ and $4\gamma=(s-1)(1-\delta)-(2-s)$. Since $2\gamma>-1$ if $s>\frac{1}{2}$, the above time integral is finite and hence we obtain
\begin{equation*}
\int_0^t\|\rho^\varepsilon\dot{u}^\varepsilon\|_{L^r}(\tilde C(t)|y_0-z_0|^{\beta_1(t)})^\alpha d\tau\le C(t)|y_0-z_0|^{\alpha\beta_1(t)},
\end{equation*}
for some $C(t)>0$. The estimate for the term involving $B^\varepsilon\cdot\nabla B^\varepsilon$ is just similar, so by applying the bound \eqref{Holder norm of x(t) 2}, we obtain from \eqref{Holder norm estimate of rho} that, 
\begin{align}\label{bound on Holder norm of rho}
|\log\rho^\varepsilon(y,t)-\log\rho^\varepsilon(z,t)|&\le \tilde C(t)N|y-z|^{\beta_0\beta_2(t)}+\tilde C(t)C(t)|y-z|^{\alpha\beta_1(t)\beta_2(t)}\notag\\
&\le C(N,t,C_0)|y-z|^{\beta(t)}
\end{align}
for some $C(N,t,C_0)>0$ and $\beta(t)>0$. The above shows that $\rho^\varepsilon(\cdot,t)$ is piecewise $C^{\beta(t)}$ with bounded modulus and Step~4 is completed.

Finally, with the improved regularity on $\rho^\varepsilon(\cdot,t)$ from Step~4, we can now make use of \eqref{decomposition of u}$_2$ again and apply properties of Newtonian potentials to conclude that the $C^{\beta(t)+1}(\R^3)$ norm of $u_P$ remains finite in finite time, which finishes Step~5 as described above.

We therefore obtain the bound on $u_p$:
\begin{equation}\label{time integral on uP}
\int_{0}^{t}||\nabla u^\varepsilon_{P}(\cdot,\tau)||_{\infty}d\tau\le C(N,t,C_0).
\end{equation}
for some $C(t,N,C_0)>0$. Combining \eqref{time integral on uF}, \eqref{bound on Holder norm of rho} and \eqref{time integral on uP}, there exists a constant $C(N,T,C_0)>0$ such that
\begin{equation}\label{bound on time integral on nabla u approximation}
\sup_{0\le \tau\le T}\|\rho^\varepsilon(\cdot,t)-\tilde\rho\|_{C^{\beta(t)}_{pw}}+\int_0^T\|\nabla u^\varepsilon(\cdot,\tau)\|_{L^\infty}d\tau\le C(N,T,C_0)
\end{equation}
for all $\varepsilon>0$. Notice that $\beta$ and $C(N,T,C_0)$ are all independent of $\varepsilon$. Hence by taking $\varepsilon\to0$ (or some subsequence $\varepsilon_k\to0$), the limit $(\rho,u,B)$ satisfies \eqref{bound on time integral on nabla u} and we finish the proof of Theorem~\ref{Existence theorem}.
\end{proof}

\section{Uniqueness of weak solution and proof of Theorem~\ref{Uniqueness theorem}}\label{proof of uniqueness section}

In this section, we address the uniqueness of weak solutions to \eqref{MHD1}-\eqref{MHD4} and prove Theorem~\ref{Uniqueness theorem}. To begin with, we fix constants $L, \beta_0, N, \rho_1,\rho_2,\tilde\rho>0$, $q>6$, $s\in(\frac{1}{2},1]$ and let $(\rho_0,u_0,B_0)$ and $(\bar{\rho}_0,\bar{u}_0,\bar{B}_0)$ be initial data satisfying \eqref{bound on initial density}-\eqref{definition of C0} and \eqref{piecewise holder for initial rho}. By Theorem~\ref{Existence theorem}, there exists weak solutions $(\rho,u,B)$ and $(\bar{\rho},\bar{u},\bar{B})$ to \eqref{MHD1}-\eqref{MHD4} with initial data $(\rho_0,u_0,B_0)$ and $(\bar{\rho}_0,\bar{u}_0,\bar{B}_0)$ respectively, both satisfying \eqref{rho in H-1}-\eqref{bound on weak solution}. In particular, for each $T>0$, we have
\begin{equation}\label{bound on A(T) uniqueness}
\bar{\mathcal{A}}(T)+\mathcal{A}(T)\le C,
\end{equation}
\begin{equation}\label{bound on rho uniqueness}
C^{-1}\le\|\rho\|_{L^\infty(\R^3\times[0,T])}\le C,\qquad C^{-1}\le\|\bar{\rho}\|_{L^\infty(\R^3\times[0,T])}\le C,
\end{equation}
\begin{equation}\label{bound on time integral on nabla u and bar u}
\int_0^T(\|\nabla u(\cdot,\tau)\|_{L^\infty}+\|\nabla \bar{u}(\cdot,\tau)\|_{L^\infty})d\tau\le C,
\end{equation}
for some constant $C$ which may depend on $T$, $L$, $\beta_0$, $N$, $\rho_1$, $\rho_2$, $\tilde\rho$, $s$ and $C_0$, and $\bar{\mathcal{A}}(T)$ is defined by \eqref{def of A(t)} with $(\rho,u,B)$ being replaced by $(\bar{\rho},\bar{u},\bar{B})$. We will then obtain the uniqueness and continuous dependence on initial data of weak solutions by proving the assertion \eqref{estimate on difference of solutions} for $(\rho,u,B)$ and $(\bar{\rho},\bar{u},\bar{B})$. 

We follow the idea and analysis given in Hoff \cite{hoff06} for compressible Navier-Stokes equations, which suggested that solutions with minimal regularity are best compared in a Lagrangian framework. We therefore state the following proposition about integral curves given in \eqref{integral curve of u}. More precisely, for $T>0$, the bound \eqref{bound on time integral on nabla u and bar u} guarantees the existence and uniqueness of the mapping $X(y,t,t')\in C(\R^3\times[0,T]^2)$ satisfying
\begin{align}\label{integral curve X}
\left\{ \begin{array}
{lr} \dis\frac{\partial X}{\partial t}(y,t,t')
=u(X(y,t,t'),t)\\ X(y,t',t')=y
\end{array} \right.
\end{align}
where $(\rho,u,B)$ is a weak solution to \eqref{MHD1}-\eqref{MHD4} on $\R^3\times[0,T]$ satisfying \eqref{rho in H-1}-\eqref{bound on weak solution} and \eqref{bound on time integral on nabla u}. Moreover, using the bound \eqref{bound on time integral on nabla u and bar u}, the mapping $X(\cdot,t,t')$ is Lipschitz on $\R^3$ for $(t,t')\in[0,T]^2$.

\begin{proposition}\label{prop on X}
Let $T>0$ and $u$ satisfy \eqref{bound on time integral on nabla u and bar u}. Then there is a unique function $X\in C(\R^3\times[0,T]^2$) satisfying \eqref{integral curve X}. In particular, $X(\cdot,t,t')$ is Lipschitz on $\R^3$ for $(t,t')\in[0,T]^2$, and there is a constant $C$ such that
\begin{equation*}
\Big\|\frac{\partial X}{\partial y}(\cdot,t,t')\Big\|_{L^\infty}\le C,\qquad (t,t')\in[0,T]^2.
\end{equation*}
\end{proposition}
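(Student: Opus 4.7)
The plan is to treat this as a standard Cauchy--Lipschitz result for the flow of a time-dependent vector field whose spatial Lipschitz constant is integrable in time. The hypothesis \eqref{bound on time integral on nabla u and bar u} supplies exactly this integrability, and the pointwise H\"{o}lder estimate \eqref{holder bound on u B} from Theorem~\ref{Existence theorem} gives $u$ a genuine continuous representative on $\R^3\times(0,T]$ along which evaluation and integration make sense. Near $t=0$ one can either work with the continuous-in-time $L^2$ representative together with the fact that $\nabla u \in L^1(0,T;L^\infty)$ forces $u(\cdot,t)$ Lipschitz for a.e.\ $t$, or reduce to the smooth approximations $u^\varepsilon$ constructed in Section~\ref{proof of existence section} for which \eqref{bound on time integral on nabla u approximation} holds uniformly in $\varepsilon$ and pass to the limit; this passage is the only genuine technical point.

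For the existence and uniqueness of $X(y,\cdot,t')$, I would fix $y \in \R^3$ and $t' \in [0,T]$ and apply the contraction mapping principle to the integral operator
\[
\Phi(\varphi)(t) = y + \int_{t'}^{t} u(\varphi(\tau),\tau)\,d\tau
\]
on $C([0,T];\R^3)$ equipped with the weighted norm $\|\varphi\|_w = \sup_{t\in[0,T]} e^{-2\Lambda(t,t')}|\varphi(t)|$, where $\Lambda(t,t') = \bigl|\int_{t'}^{t}\|\nabla u(\cdot,\tau)\|_{L^\infty}d\tau\bigr|$. The Lipschitz estimate
\[
|u(\varphi_1(\tau),\tau) - u(\varphi_2(\tau),\tau)| \le \|\nabla u(\cdot,\tau)\|_{L^\infty}|\varphi_1(\tau) - \varphi_2(\tau)|
\]
makes $\Phi$ a strict contraction in this norm, yielding a unique fixed point $X(y,\cdot,t')$. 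Joint continuity in $(y,t,t') \in \R^3 \times [0,T]^2$ follows by comparing integral equations for perturbed arguments together with absolute continuity of $t\mapsto X(y,t,t')$.

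For the Lipschitz estimate and the bound on $\partial X/\partial y$, given $y,z \in \R^3$ set $\phi(t) = |X(y,t,t') - X(z,t,t')|$. Subtracting the two integral equations gives
\[
\phi(t) \le |y-z| + \Bigl|\int_{t'}^{t} \|\nabla u(\cdot,\tau)\|_{L^\infty}\phi(\tau)\,d\tau\Bigr|,
\]
and Gr\"{o}nwall's inequality yields
\[
\phi(t) \le |y-z|\exp\Bigl(\Lambda(t,t')\Bigr) \le e^{C}|y-z|, \qquad (t,t')\in[0,T]^2,
\]
by the hypothesis on the integral of $\|\nabla u\|_{L^\infty}$. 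Dividing by $|y-z|$ and passing to the limit (or formally differentiating the integral equation) gives $\|\partial X/\partial y(\cdot,t,t')\|_{L^\infty}\le e^{C}$.

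The main obstacle, as noted above, is justifying the pointwise manipulations when $u$ is only a weak solution; the cleanest route is to invoke the uniform-in-$\varepsilon$ bound \eqref{bound on time integral on nabla u approximation} on the smooth approximants, solve the ODE for each $u^\varepsilon$ by classical theory, derive the Lipschitz and continuity bounds uniformly in $\varepsilon$ via the Gr\"{o}nwall argument above, and then extract $X$ as the uniform limit of $X^\varepsilon$ on compact subsets of $\R^3\times[0,T]^2$ by Arzel\`{a}--Ascoli. Uniqueness of the limit follows from the same Gr\"{o}nwall argument applied to any two solutions of the limit equation that are evaluated against the H\"{o}lder representative of $u$.
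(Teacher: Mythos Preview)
Your argument is correct and is the standard Cauchy--Lipschitz/Gr\"{o}nwall treatment of flows generated by vector fields with $L^1_t L^\infty_x$ gradient. The paper itself does not supply a proof but simply refers to Lemma~2.1 of Hoff \cite{hoff06}, which contains essentially the same reasoning you have written out; so you have in fact provided more detail than the paper does.
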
 

\begin{proof}
Refer to the proof of Lemma~2.1 in Hoff \cite{hoff06}.
\end{proof}

With respect to velocities $u$ and $\bar{u}$, for $y\in\R^3$, we let $X$, $\bar{X}$ be two integral curves given by
\begin{align*}
\left\{ \begin{array}
{lr} \dis\frac{\partial X}{\partial t}(y,t,t')
=u(X(y,t,t'),t)\\ X(y,t',t')=y
\end{array} \right.
\end{align*}
and
\begin{align*}
\left\{ \begin{array}
{lr} \dis\frac{\partial \bar{X}}{\partial t}(y,t,t')
=\bar{u}(\bar{X}(y,t,t'),t)\\ \bar{X}(y,t',t')=y.
\end{array} \right.
\end{align*}
We then define $S(x,t)$, $S^{-1}(x,t)$ by
\begin{equation}\label{def of S}
S(x,t)=\bar{X}(X(x,0,t),t,0),
\end{equation}
and 
\begin{equation}\label{def of S-1}
S^{-1}(x,t)=X(\bar{X}(x,0,t),t,0).
\end{equation}
The following proposition provides some properties of $S$ and $S^{-1}$ which will become useful later. A proof can be found in Hoff \cite{hoff06}.
\begin{proposition}\label{prop on S}
Let $S$ and $S^{-1}$ be as given in \eqref{def of S}-\eqref{def of S-1}. Then we have
\begin{itemize}
\item $S^{\pm1}$ is continuous on $R^3\times[0,T]$ and Lipschitz continuous on $R^3\times[\tau,T]$ for all $\tau>0$, and there is a constant C such that $$\|\nabla S^{\pm1}(\cdot,t)\|_{L^\infty}\le C,\qquad t\in[0,T];$$
\item $(S_t+\nabla S u)(x,t)=\bar{u}(S(x,t),t)$ a.e. in $\R^3\times(0,T);$
\item $\bar{\rho}(S(x,t),t)\rho_0(X(x,0,t))\det\nabla S(x,t)=\rho(x,t)\bar{\rho}_0(X(x,0,t))$ a.e. in $\R^3\times(0,T).$
\end{itemize}
\end{proposition}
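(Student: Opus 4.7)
The plan is to verify each bullet in turn, using the flow construction from Proposition~\ref{prop on X} together with the chain rule and the Lagrangian form of the mass equation. The whole argument is structural once the Lipschitz bounds on $X$ and $\bar{X}$ are in hand.

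For the first bullet, Proposition~\ref{prop on X} applied to both $u$ and $\bar{u}$ (which both satisfy \eqref{bound on time integral on nabla u and bar u}) gives Lipschitz bounds $\|\nabla X(\cdot,t,t')\|_{L^\infty}+\|\nabla \bar{X}(\cdot,t,t')\|_{L^\infty}\le C$ uniformly on $[0,T]^2$. Since $S=\bar{X}(\cdot,t,0)\circ X(\cdot,0,t)$ and $S^{-1}=X(\cdot,t,0)\circ \bar{X}(\cdot,0,t)$, the bound on $\nabla S^{\pm 1}(\cdot,t)$ follows from the chain rule. Continuity on $\R^3\times[0,T]$ follows from continuity of $X$ and $\bar{X}$ in all arguments, which is built into Proposition~\ref{prop on X}, and Lipschitz continuity on $\R^3\times[\tau,T]$ for $\tau>0$ follows from the defining ODE together with the $L^\infty$ bound on $u,\bar{u}$ on $\R^3\times[\tau,T]$ that is implicit in \eqref{holder bound on u B}.

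For the second bullet, I would set $Y(x,t)=X(x,0,t)$ and differentiate $S(x,t)=\bar{X}(Y(x,t),t,0)$:
\begin{equation*}
S_t(x,t)=\partial_t\bar{X}(Y,t,0)+\nabla_y\bar{X}(Y,t,0)\,Y_t(x,t).
\end{equation*}
The first term equals $\bar{u}(\bar{X}(Y,t,0),t)=\bar{u}(S,t)$ by the defining ODE for $\bar{X}$. For $Y_t$, differentiating the identity $x=X(Y(x,t),t,0)$ in $t$ gives $0=u(x,t)+\nabla_yX(Y,t,0)\,Y_t$, so $Y_t=-(\nabla_yX(Y,t,0))^{-1}u(x,t)$. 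Taking the $x$-gradient of $S$ yields $\nabla S(x,t)=\nabla_y\bar{X}(Y,t,0)(\nabla_yX(Y,t,0))^{-1}$, and hence $\nabla S\cdot u(x,t)=-\nabla_y\bar{X}(Y,t,0)\,Y_t$. Adding the two contributions gives the stated identity.

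For the third bullet, I would recall the Lagrangian form of the continuity equation: integrating \eqref{MHD1} along the characteristics of $u$ and $\bar{u}$ respectively gives
\begin{equation*}
\rho(X(y,t,0),t)\det\nabla_yX(y,t,0)=\rho_0(y),\qquad \bar{\rho}(\bar{X}(y,t,0),t)\det\nabla_y\bar{X}(y,t,0)=\bar{\rho}_0(y).
\end{equation*}
Evaluating at $y=Y=X(x,0,t)$ and using $\det\nabla S(x,t)=\det\nabla_y\bar{X}(Y,t,0)/\det\nabla_yX(Y,t,0)$ together with $\det\nabla_yX(Y,t,0)=\rho_0(Y)/\rho(x,t)$ (from the first identity applied at $x=X(Y,t,0)$), one eliminates the two Jacobians and arrives at the stated mass identity.

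The main technical obstacle is that $u$ and $\bar{u}$ are only weak solutions, so neither the chain rule on $S$ nor the Lagrangian mass identities above hold classically. To make the derivations rigorous, I would invoke the approximating sequence $(\rho^{\varepsilon},u^{\varepsilon},B^{\varepsilon})$ constructed in the proof of Theorem~\ref{Existence theorem}: for each $\varepsilon>0$ the smooth flows $X^{\varepsilon}$, $\bar{X}^{\varepsilon}$ exist, the three identities hold classically for the composite $S^{\varepsilon}$, and the uniform bounds \eqref{bound on A(T) uniqueness}--\eqref{bound on time integral on nabla u and bar u} together with the compactness statements in Lemma~\ref{Compactness lemma} let me pass to the limit. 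The ``a.e.'' qualification in bullets two and three is precisely the loss that this limiting procedure produces, and the uniform Lipschitz bounds of bullet one survive the limit intact.
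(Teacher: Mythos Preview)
Your proposal is correct and follows the standard approach; the paper itself does not give a proof but simply cites Hoff \cite{hoff06}, and your argument---chain rule on the composite flows for bullets one and two, and the Lagrangian Jacobian identities for the mass equation in bullet three, all made rigorous via the smooth approximations---is exactly the computation one finds there.
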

We are now ready to give the proof of Theorem~\ref{Uniqueness theorem}. First, we let $\psi:\R^3\times[0,T]\rightarrow\R^3$ be a test function satisfying
\begin{align}\label{weak form for rho}
&-\intox \rho_0(x)u_0(x)\psi(x,0)dx\notag\\
&=\intoxt \Big[\rho u\cdot(\psi_t+\nabla\psi u)+(P(\rho)-\tilde P)\divv(\psi)-\mu\nabla u^j\cdot\nabla\psi^j\notag\\
&\qquad\qquad\qquad\qquad-\lambda\divv(u)\divv(\psi)+(\frac{1}{2}|B|^2)\divv(\psi)-B^jB\cdot\nabla\psi^j\Big]dxd\tau.
\end{align}
Define $\bar\psi=\psi\circ S^{-1}$. Then we have
\begin{align}\label{weak form for bar rho}
&-\intox \bar{\rho}_0(x)\bar{u}_0(x)\bar{\psi}(x,0)dx\notag\\
&=\intoxt \Big[\bar{\rho}\bar{u}\cdot(\bar{\psi}_t+\nabla\bar{\psi}\bar{u})+(P(\bar{\rho})-\tilde P)\divv(\bar{\psi})-\mu\nabla\bar{u}^j\cdot\nabla\bar{\psi}^j\notag\\
&\qquad\qquad\qquad\qquad-\lambda\divv(\bar{u})\divv(\bar{\psi})+(\frac{1}{2}|\bar{B}|^2)\divv(\bar{\psi})-\bar{B}^j\bar{B}\cdot\nabla\bar{\psi}^j\Big]dxd\tau.
\end{align}
Notice that
\begin{align*}
\intox \bar{\rho}\bar{u}\cdot(\bar{\psi}_t+\nabla\bar{\psi}\bar{u})dx&=\intox \bar{\rho}(S)\bar{u}(S)\cdot(\bar{\psi}_t(S)+\nabla\bar{\psi}\bar{u}(S))|\det(\nabla S)|dx\\
&=\intox A_0\rho\bar{u}(S)(\psi_t+\nabla\psi u)dx,
\end{align*}
where we used the fact that $A_0\rho=(\bar{\rho}\circ S)|\det(\nabla S)|$ from Proposition~\ref{prop on S}. Hence by taking the difference between \eqref{weak form for rho} and \eqref{weak form for bar rho}, and using the effective viscous flux $\bar{F}$ as defined in \eqref{definition of F} (replacing $F$ by $\bar{F}$, $u$ by $\bar{u}$, etc.), for all $\psi$ and $\bar{\psi}$, we have
\begin{align}\label{estimate on weak form for u and rho}
&\intox (\bar{\rho}\bar{u}_0-\rho_0u_0)\cdot\psi(x,0)dx\notag\\
&=\intoxt \Big[\rho(u-\bar{u}\circ S)(\psi_t+\nabla\psi u)+(1-A_0)\rho(\bar{u}\circ S)(\psi_t+\nabla\psi u)\Big]dxd\tau\notag\\
&\qquad+\intoxt \Big[(\tilde P-P(\rho))\divv(\bar{\psi})+\mu\nabla\bar{u}^j\cdot\nabla\bar{\psi}^j+\lambda\divv(\bar{u})\divv(\bar{\psi})\Big]dxd\tau\notag\\
&\qquad+\intoxt \Big[(1-A_0)\rho(\bar{u}\circ S)(\psi_t+\nabla\psi u)+(P(\rho)-P(\bar{\rho}))\divv(\psi)\Big]dxd\tau\notag\\
&\qquad+\intoxt \Big(\frac{1}{2}|B|^2\divv(\psi)-\frac{1}{2}|\bar{B}|^2\divv(\bar{\psi})\Big)-\Big(B^jB\cdot\nabla\psi^j-\bar{B}^j\bar{B}\cdot\nabla\bar{\psi}^j\Big)dxd\tau\notag\\
&\qquad+\intoxt \Big[\nabla\bar{F}\cdot(\psi-\psi\circ S^{-1})+\mu\bar{\omega}^{j,k}_{x_k}(\psi^j-\psi^j\circ S^{-1})\Big]dxd\tau\notag\\
&\qquad+\intoxt (\bar{u}\circ S-\bar{u})(\mu\Delta\psi+\lambda\nabla\divv(\psi))dxd\tau.
\end{align}
Similarly, with respect to the magnetic fields $B$ and $\bar{B}$, we let $\varphi:\R^3\times[0,T]\to\R^3$ be test function satisfying
\begin{align}\label{estimate on weak form for B}
&-\intox (\bar{B}_0-B_0(x))\cdot\varphi(x,0)dx\notag\\
&=\intoxt(B-\bar{B})\cdot(\varphi_t+u\cdot\nabla\varphi+\nu\Delta\varphi)dxd\tau+\intoxt\nabla\varphi^j(\bar{B}-B)\bar{B}^jdxd\tau\notag\\
&\qquad+\intoxt\nabla\varphi^j(\bar{u}^j-u^j)Bdxd\tau+\intoxt\nabla\varphi^j(u-\bar{u})\bar{B}^jdxd\tau.
\end{align}

Next we extend $\rho$, $u$ and $B$ to be constant in $t$ outside $[0,T]$ and let $\rho^\varepsilon$, $u^\varepsilon$ and $B^\varepsilon$ be the corresponding smooth approximation obtained by mollifying in both $x$ and $t$. Then we define $\psi^\varepsilon,\varphi^\varepsilon:\R^3\times[0,T]\to\R^3$ to be the solutions satisfying
\begin{align*}
\left\{ \begin{array}
{lr} \rho^\varepsilon(\psi^\varepsilon_t+u^\varepsilon\cdot\nabla\psi^\varepsilon)+\mu\Delta\psi^\varepsilon+\lambda\nabla\divv(\psi^\varepsilon)=G\\ 
\psi^\varepsilon(\cdot,T)=0,
\end{array} \right.
\end{align*}
and
\begin{align*}
\left\{ \begin{array}
{lr} \varphi^\varepsilon_t+u^\varepsilon\cdot\nabla\varphi^\varepsilon+\nu\Delta\varphi^\varepsilon=H\\ 
\varphi^\varepsilon(\cdot,T)=0,
\end{array} \right.
\end{align*}
for given functions $G,H\in H^\infty(\R^3\times[0,T])$. By simple estimates, $\psi^\varepsilon$ and $\varphi^\varepsilon$ satisfy the following bounds in terms of $G$ and $H$:
\begin{align}\label{bound on psi}
&\sup_{0\le \tau\le T}\intox[|\psi^\varepsilon(x,t)|^2+|\nabla\psi^\varepsilon(x,t)|^2]dx+\intoxt[|\psi^\varepsilon_t+\nabla\psi^\varepsilon u^\varepsilon|^2+|D^2_x\psi^\varepsilon|^2]dxd\tau\notag\\
&\le C\intoxt|G|^2dxd\tau
\end{align}
and
\begin{align}\label{bound on varphi}
&\sup_{0\le \tau\le T}\intox[|\varphi^\varepsilon(x,t)|^2+|\nabla\varphi^\varepsilon(x,t)|^2]dx+\intoxt[|\varphi^\varepsilon_t+\nabla\varphi^\varepsilon u^\varepsilon|^2+|D^2_x\varphi^\varepsilon|^2]dxd\tau\notag\\
&\le C\intoxt|H|^2dxd\tau.
\end{align}

We now take $\psi=\psi^\varepsilon$ in \eqref{estimate on weak form for u and rho} and $\varphi=\varphi^\varepsilon$ in \eqref{estimate on weak form for B} respectively to obtain
\begin{align}\label{estimate on weak form for u and rho 2}
\intox (\bar{\rho}\bar{u}_0-\rho_0u_0)\cdot\psi^\varepsilon(x,0)dx=\intoxt z\cdot Gdxd\tau+\sum_{i=1}^7\mathcal{R}_i,
\end{align}
\begin{align}\label{estimate on weak form for B 2}
-\intox (\bar{B}_0-B_0)\cdot\varphi^\varepsilon(x,0)dx=\intoxt(B-\bar{B})\cdot Hdxd\tau+\mathcal{R}_8,
\end{align}
where $z=u-\bar{u}\circ S$ and $\mathcal{R}_1,\dots,\mathcal{R}_8$ are given by:
\begin{align*}\
&\mathcal{R}_1=\intoxt\Big[\nabla\bar{F}\cdot(\psi^\varepsilon-\psi^\varepsilon\circ S^{-1})+\mu\bar{\omega}^{j,k}_{x_k}(\psi^\varepsilon-\psi^\varepsilon\circ S^{-1})\Big]dxd\tau,\\
&\mathcal{R}_2=\intoxt z\cdot \Big[(\rho-\rho^\varepsilon)\psi^\varepsilon_t+\nabla\psi^\varepsilon(\rho u-\rho^\varepsilon u^\varepsilon)\Big]dxd\tau,
\end{align*}
\begin{align*}
&\mathcal{R}_3=\intoxt(\bar{u}\circ S-\bar{u})\cdot(\mu\Delta\psi^\varepsilon+\lambda\divv(\psi^\varepsilon))dxd\tau,\\
&\mathcal{R}_4=\intoxt(1-A_0)\rho(\bar{u}\circ S)\cdot(\psi^\varepsilon_t+\nabla\psi^\varepsilon u)dxd\tau,\\
&\mathcal{R}_5=\intoxt(P(\rho)-P(\bar\rho))\divv(\psi^\varepsilon)dxd\tau,
\end{align*}
\begin{align*}
&\mathcal{R}_6=\intoxt\bar{B}^j_{x_k}\bar{B}^k((\psi^\varepsilon)^j-(\psi^\varepsilon)^j\circ S^{-1})dxd\tau\\
&\qquad\qquad\qquad-\intoxt\frac{1}{2}\nabla(|\bar{B}|^2)\cdot(\psi^\varepsilon-\psi^\varepsilon\circ S^{-1})dxd\tau,\\
&\mathcal{R}_7=\intoxt\frac{1}{2}(|B|^2-|\bar{B}|^2)\divv(\psi^\varepsilon)dxd\tau\\
&\qquad\qquad\qquad-\intoxt(B^jB-\bar{B}^j\bar{B})\cdot\psi^\varepsilon dxd\tau,
\end{align*}
and
\begin{align*}
\mathcal{R}_8&=\intoxt\nabla(\varphi^\varepsilon)^j(\bar{B}-B)\bar{B}^jdxd\tau+\intoxt\nabla(\varphi^\varepsilon)^j(\bar{u}^j-u^j)Bdxd\tau\\
&\qquad+\intoxt\nabla(\varphi^\varepsilon)^j(u-\bar{u})\bar{B}^jdxd\tau.
\end{align*}

Our main goal is to estimate the terms $\mathcal{R}_1,\dots,\mathcal{R}_{8}$ and the terms on the left sides of \eqref{estimate on weak form for u and rho 2}-\eqref{estimate on weak form for B 2} and then take the limit as $\varepsilon\to0$. Most of the analysis are reminiscent of those given in Hoff \cite{hoff06} except the terms $\mathcal{R}_1$, $\mathcal{R}_3$, $\mathcal{R}_6$, $\mathcal{R}_7$ and $\mathcal{R}_8$. 

Following the steps given in \cite{hoff06}, using Proposition~\ref{prop on S} and applying the bound \eqref{bound on psi} on $\psi^\varepsilon$, we are ready to obtain:
\begin{align}\label{estimate of LHS}
&\Big|\intox (\bar{\rho}\bar{u}_0-\rho_0u_0)\cdot\psi^\varepsilon(x,0)dx\Big|+\Big|\intox (\bar{B}_0-B_0)\cdot\varphi^\varepsilon(x,0)dx\Big|\notag\\
&\le\|\rho_0u_0-\bar{\rho}_0\bar{u}_0\|_{L^2}\Big(\intoxt|G|^2dxd\tau\Big)^\frac{1}{2}+\|B_0-\bar{B}_0\|_{L^2}\Big(\intoxt|H|^2dxd\tau\Big)^\frac{1}{2},
\end{align}
\begin{equation}\label{estimate on R4}
|\mathcal{R}_4|\le CT^\frac{1}{2}\Big[\|\rho_0-\bar{\rho}_0\|_{L^2}+\Big(\intoxt|z|^2dxd\tau\Big)^\frac{1}{2}\Big]\Big(\intoxt|G|^2dxd\tau\Big)^\frac{1}{2},
\end{equation}
\begin{equation}\label{estimate on R2}
\lim_{\varepsilon\to0}\mathcal{R}_2=0,
\end{equation}
and under the assumption \eqref{isothermal} on pressure $P$, the following estimate holds
\begin{align}\label{estimate on R5}
|\mathcal{R}_5|&\le\intoxt K|(\rho-\bar\rho)\divv(\psi^\varepsilon)|dxd\tau\notag\\
&\le CT^\frac{1}{2}\Big[\|\rho_0-\bar{\rho}_0\|_{L^2}+\Big(\intoxt|z|^2dxd\tau\Big)^\frac{1}{2}\Big]\Big(\intoxt|G|^2dxd\tau\Big)^\frac{1}{2}.
\end{align}

We now give the estimates $\mathcal{R}_1$, $\mathcal{R}_3$, $\mathcal{R}_6$, $\mathcal{R}_7$ and $\mathcal{R}_8$ as follows. To estimate $\mathcal{R}_1$, modulo the vorticity $\omega$, we obtain that
\begin{align*}
|\mathcal{R}_1|&\le C\Big(\intoxt|z|^2dxd\tau\Big)^\frac{1}{2}\int_0^T \tau^\frac{1}{2}\|\nabla\bar{F}(\cdot,\tau)\|_{L^4}\|\nabla\psi^\varepsilon(\cdot,t)\|_{L^4}d\tau\\
&\le C\Big(\intoxt|z|^2dxd\tau\Big)^\frac{1}{2}\Big(\intoxt|G|^2dxd\tau\Big)^\frac{1}{2}\Big(\int_0^T \tau^\frac{4}{5}\|\nabla\bar{F}(\cdot,\tau)\|^\frac{8}{5}_{L^4}d\tau\Big)^\frac{5}{8}\\
&\qquad\times\Big(\intoxt|D^2_x\psi^\varepsilon|^2dxd\tau\Big)^\frac{3}{8}.
\end{align*}
To bound the term involving $\bar{F}$ as above, we use \eqref{bound on A(T) uniqueness}-\eqref{bound on rho uniqueness} to obtain
\begin{equation}\label{bound on L4 nabla F}
\int_0^T \tau^\frac{4}{5}\|\nabla\bar{F}(\cdot,\tau)\|^\frac{8}{5}_{L^4}d\tau\le\int_0^T \tau^\frac{4}{5}\Big(\intox|\dot{\bar{u}}|^4dx+\intox|\nabla\bar{B}|^4|\bar{B}|^4dx\Big)^\frac{3}{5}d\tau.
\end{equation}
The first integral on the right side of \eqref{bound on L4 nabla F} is bounded by
\begin{align*}
&\int_0^T \tau^\frac{4}{5}\Big(\intox|\dot{\bar{u}}|^4dx\Big)^\frac{3}{5}d\tau\\
&\le C\int_0^T \tau^\frac{4}{5}\Big(\intox|\dot{\bar{u}}|^2dx\Big)^\frac{1}{5}\Big(\intox|\nabla\dot{\bar{u}}|^2dx\Big)^\frac{3}{5}d\tau\\
&\le C\Big(\int_0^T \tau^{4s-3}d\tau\Big)^\frac{1}{5}\Big(\int_0^T \tau^{1-s}\intox|\dot{\bar{u}}|^2dxd\tau\Big)^\frac{1}{5}\Big(\int_0^T \tau^{1-s}\intox|\dot{\bar{u}}|^2dxd\tau\Big)^\frac{1}{5}\le CT^\frac{4s-2}{5},
\end{align*}
where the last inequality holds by the bounds \eqref{bound on weak solution} and \eqref{bound on A(T) uniqueness}-\eqref{bound on rho uniqueness}, and the assumption that $s>\frac{1}{2}$. On the other hand, to bound the term involving $B\cdot\nabla B$ in \eqref{bound on L4 nabla F}, we have
\begin{align*}
&\int_0^T \tau^\frac{4}{5}\Big(\intox|\nabla B|^4|B|^4dx\Big)^\frac{2}{5}d\tau\\
&\le C\Big(\int_0^T \tau^{4s-3}d\tau\Big)^\frac{1}{5}\Big(\int_0^T\tau^{1-s}\intox|\nabla B|^2|B|^2dxd\tau\Big)^\frac{1}{5}\\
&\qquad\times\Big(\int_0^T \tau^{2-s}\intox|\nabla B|^4dxd\tau+\int_0^T \tau^{2-s}\intox|\Delta B|^2|B|^2dxd\tau\Big)^\frac{3}{5}.
\end{align*}
Following the proof of the bound \eqref{bound on A123} in Lemma~\ref{H2 estimates}, we use \eqref{bound on A(T) uniqueness}-\eqref{bound on rho uniqueness} to obtain
\begin{align*}
\int_0^T\tau^{1-s}\intox|\nabla B|^2|B|^2dxd\tau+\int_0^T \tau^{2-s}\intox|\nabla B|^4dxd\tau\le C.
\end{align*}
Using the magnetic field equation \eqref{MHD3}, we also have
\begin{align*}
&\int_0^T \tau^{2-s}\intox|\Delta B|^2|B|^2dxd\tau\\
&\le C\Big(\intoxt \tau^{2-s}|B_t|^2|B|^2dxd\tau+\intoxt \tau^{2-s}(|\nabla u|^2|B|^2dxd\tau+|\nabla B|^2|u|^2)|B|^2dxd\tau\Big)\\
&\le C+C\intoxt \tau^{2-s}|B_t|^2|B|^2dxd\tau,
\end{align*}
and also
\begin{align*}
&\intoxt \tau^{2-s}|B_t|^2|B|^2dxd\tau\\
&\le\int_0^T \tau^{2-s}\Big(\intox|B_t|^3dx\Big)^\frac{2}{3}\Big(\intox|B|^6dx\Big)^\frac{1}{3}d\tau\\
&\le C\Big(\sup_{0\le \tau\le T}\intox|B|^6dx\Big)^\frac{1}{3}\Big(\intoxt \tau^{2-s}|\nabla B_t|^2dxd\tau\Big)^\frac{1}{2}\Big(\intoxt \tau^{1-s}|B_t|^2dxd\tau\Big)^\frac{1}{2}\\
&\le C.
\end{align*}
Combining the above estimates, we conclude from \eqref{bound on L4 nabla F} that 
\begin{equation*}
\int_0^T \tau^\frac{4}{5}\|\nabla\bar{F}(\cdot,\tau)\|^\frac{8}{5}_{L^4}d\tau\le CT^\frac{2s-1}{4},
\end{equation*}
and we obtain the estimate on $\mathcal{R}_1$
\begin{equation}\label{estimate on R1}
|\mathcal{R}_1|\le CT^\frac{2s-1}{4}\Big(\intoxt|z|^2dxd\tau\Big)^\frac{1}{2}\Big(\intoxt|G|^2dxd\tau\Big)^\frac{1}{2}.
\end{equation}
In particular, for $[t_1,t_2]\subseteq[0,T]$, if we define
\begin{equation*}
\mathcal{R}_1(t_1,t_2)=\int_{t_1}^{t_2}\intox\Big[\nabla\bar{F}\cdot(\psi^\varepsilon-\psi^\varepsilon\circ S^{-1})+\mu\bar{\omega}^{j,k}_{x_k}(\psi^\varepsilon-\psi^\varepsilon\circ S^{-1})\Big]dxd\tau,
\end{equation*}
then we also have
\begin{equation*}
|\mathcal{R}_1(t_1,t_2)|\le C|t_2-t_1|^\frac{2s-1}{4}\Big(\intoxts|z|^2dxd\tau\Big)^\frac{1}{2}\Big(\intoxts|G|^2dxd\tau\Big)^\frac{1}{2}
\end{equation*}
with $C$ being independent of $t_1$, $t_2$ and $G$. The term $\mathcal{R}_3$ can be bounded in a similar way as $\mathcal{R}_1$.

To estimate $\mathcal{R}_6$, in view of the definition of $\mathcal{R}_6$, we first consider the term $\intoxt\bar{B}^j_{x_k}\bar{B}^k\Big((\psi^\varepsilon)^j-(\psi^\varepsilon)^j\circ S^{-1}\Big)$ which can be bounded as follows.
\begin{align*}
&\Big|\intoxt\bar{B}^j_{x_k}\bar{B}^k\Big((\psi^\varepsilon)^j-(\psi^\varepsilon)^j\circ S^{-1}\Big)dxd\tau\Big|\\
&\le C\intoxt|\nabla\bar{B}||\bar{B}||\psi^\varepsilon(x,\tau)-\psi^\varepsilon(S^{-1}(x,\tau),\tau)|dxd\tau\\
&\le C\Big(\intoxt|z|^2dxd\tau\Big)^\frac{1}{2}\int_0^T t^\frac{1}{2}\|\nabla\bar{B}(\cdot,\tau)\|_{L^4}\|\nabla\psi^\varepsilon(\cdot,\tau)\|_{L^4}dt\\
&\le C\Big(\intoxt|z|^2dxd\tau\Big)^\frac{1}{2}\Big(\intoxt|G|^2dxd\tau\Big)^\frac{1}{2}\Big(\int_0^T \tau^\frac{4}{5}\|\nabla\bar{B}(\cdot,\tau)\|_{L^4}^\frac{8}{5}d\tau\Big)^\frac{5}{8}.
\end{align*}
The term $\int_0^T \tau^\frac{4}{5}\|\nabla\bar{B}(\cdot,t)\|_{L^4}^\frac{8}{5}d\tau$ can be bounded in a similar way as $\int_0^T \tau^\frac{4}{5}\|\nabla\bar{F}(\cdot,t)\|^\frac{8}{5}_{L^4}d\tau$, hence we obtain
\begin{align*}
&\Big|\intoxt\bar{B}^j_{x_k}\bar{B}^k\Big((\psi^\varepsilon)^j-(\psi^\varepsilon)^j\circ S^{-1}\Big)dxd\tau\Big|\\
&\le CT^\frac{2s-1}{4}\Big(\intoxt|z|^2dxd\tau\Big)^\frac{1}{2}\Big(\intoxt|G|^2dxd\tau\Big)^\frac{1}{2}.
\end{align*}
On the other hand, the term $\intoxt\frac{1}{2}\nabla(|\bar{B}|^2)\cdot(\psi-\psi\circ S^{-1})dxd\tau$ in the definition of $\mathcal{R}_6$ can be treated similarly, and hence we conclude
\begin{equation}\label{estimate on R6}
|\mathcal{R}_6|\le CT^\frac{2s-1}{4}\Big(\intoxt|z|^2dxd\tau\Big)^\frac{1}{2}\Big(\intoxt|G|^2dxd\tau\Big)^\frac{1}{2}
\end{equation}
and in particular
\begin{equation*}
|\mathcal{R}_6(t_1,t_2)|\le C|t_2-t_1|^\frac{2s-1}{4}\Big(\intoxts|z|^2dxd\tau\Big)^\frac{1}{2}\Big(\intoxts|G|^2dxd\tau\Big)^\frac{1}{2},
\end{equation*}
where $[t_1,t_2]\subseteq[0,T]$ and $\mathcal{R}_6(t_1,t_2)$ is given by
\begin{equation*}
\mathcal{R}_6(t_1,t_2)=\int_{t_1}^{t_2}\intox\Big[\bar{B}^j_{x_k}\bar{B}^k((\psi^\varepsilon)^j-(\psi^\varepsilon)^j\circ S^{-1})-\frac{1}{2}\nabla(|\bar{B}|^2)\cdot(\psi^\varepsilon-\psi^\varepsilon\circ S^{-1})\Big]dxd\tau.
\end{equation*}

To estimate $\mathcal{R}_7$, we can readily obtain the bound as follows:
\begin{align*}
|\mathcal{R}_7|&\le C\Big(\intoxt|B-\bar{B}|^2dxd\tau\Big)^\frac{1}{2}\Big(\intoxt(|B|^6+|\bar{B}|^6)dxd\tau\Big)^\frac{1}{6}\Big(\intoxt|\nabla\psi^\varepsilon|^3dxd\tau\Big)^\frac{1}{3}
\end{align*}
which gives
\begin{equation}\label{estimate on R7}
|\mathcal{R}_7|\le CT^\frac{1}{3}\Big(\intoxt|G|^2dxd\tau\Big)^\frac{1}{2}.
\end{equation}

Similarly, for the term $\mathcal{R}_8$, using the bound \eqref{bound on varphi} we have the estimate
\begin{align}\label{estimate on R8}
|\mathcal{R}_8|&\le C\Big(\intoxt(|u-\bar{u}|^2+|B-\bar{B}|^2)dxd\tau\Big)^\frac{1}{2}\notag\\
&\qquad\times\Big(\intoxt(|B|^6+|\bar{B}|^6)dxd\tau\Big)^\frac{1}{6}\Big(\intoxt|\nabla\varphi^\varepsilon|^3dxd\tau\Big)^\frac{1}{3}\notag\\
&\le CT^\frac{1}{3}\Big(\intoxt|H|^2dxd\tau\Big)^\frac{1}{2}.
\end{align}
Summarizing the estimates \eqref{estimate of LHS}, \eqref{estimate on R4}, \eqref{estimate on R2}, \eqref{estimate on R5}, \eqref{estimate on R1}, \eqref{estimate on R6}, \eqref{estimate on R7} and \eqref{estimate on R8}, we arrive at
\begin{align}\label{estimate on z in T}
&\Big|\intoxt z\cdot Gdxd\tau\Big|\notag\\
&\le C\Big[M_0\Big(\intoxt|G|^2dxd\tau\Big)^\frac{1}{2}+|\mathcal{R}_1(0,T)|+|\mathcal{R}_6(0,T)|\Big],
\end{align}
and
\begin{align*}
\Big|\intoxt (B-\Bar{B})\cdot Hdxd\tau\Big|\le CM_0\Big(\intoxt|H|^2dxd\tau\Big)^\frac{1}{2},
\end{align*}
where $M_0$ is given by
\begin{align*}
M_0=\|\rho_0-\bar{\rho}_0\|_{L^2}+\|\rho_0u_0-\bar{\rho}_0\bar{u}_0\|_{L^2}+T^{\delta}\Big(\intoxt|z|^2dxd\tau\Big)^\frac{1}{2}
\end{align*}
for some $\delta>0$, and $C>0$ is now fixed. Following the analysis given in Hoff \cite[pp. 1758-1759]{hoff06}, there exists a small time $\tilde\tau>0$ such that 
\begin{equation*}
\Big(\int_0^{\tilde\tau}\!\!\!\intox|z|^2dxd\tau\Big)^\frac{1}{2}\le 2CM_0,
\end{equation*}
and consequently
\begin{equation*}
|\mathcal{R}_1(0,\tilde\tau)|+|\mathcal{R}_6(0,\tilde\tau)|\le M_0\Big(\int_0^{\tilde\tau}\!\!\!\intox|G|^2dxd\tau\Big)^\frac{1}{2}.
\end{equation*}
By applying \eqref{estimate on z in T} with $T$ replaced by $2\tilde\tau$, we get
\begin{equation*}
\Big(\int_0^{2\tilde\tau}\!\!\!\intox|z|^2dxd\tau\Big)^\frac{1}{2}\le 4CM_0.
\end{equation*}
Since $\tilde\tau>0$ is fixed, we can exhaust the interval $[0,T]$ in finitely many steps to obtain that
\begin{equation*}
\Big(\intoxt|z|^2dxd\tau\Big)^\frac{1}{2}\le CM_0,
\end{equation*}
for some new constant $C>0$. Hence the term $T^{\delta}\Big(\intoxt|z|^2dxd\tau\Big)^\frac{1}{2}$ can be eliminated from the definition of $M_0$ by a Gronw\"{a}ll-type argument. Therefore we conclude that
\begin{align}
\Big|\intoxt z\cdot Gdxd\tau\Big|&\le CM_0\Big(\intoxt|G|^2dxd\tau\Big)^\frac{1}{2},\label{L2 bound on u uniqueness}\\
\Big|\intoxt (B-\Bar{B})\cdot Hdxd\tau\Big|&\le CM_0\Big(\intoxt|H|^2dxd\tau\Big)^\frac{1}{2}\label{L2 bound on B uniqueness}.
\end{align}
Since \eqref{L2 bound on u uniqueness} and \eqref{L2 bound on B uniqueness} hold for any $G,H\in H^\infty(\R^3\times[0,T])$, it shows that both $\|z\|_{L^2([0,T]\times\R^3)}$ and $\|B-\bar{B}\|_{L^2([0,T]\times\R^3)}$ are bounded by $M_0$. Finally, using the bound \eqref{bound on time integral on nabla u and bar u} on $\nabla\bar{u}$,
\begin{align*}
\intoxt|\bar{u}-\bar{u}\circ S|^2dxd\tau&\le\int_0^T\|\nabla\bar{u}(\cdot,\tau)\|^2_{L^\infty}\intox|x-S(x,\tau)|^2dxd\tau\\
&\le C\intoxt|z|^2dxd\tau,
\end{align*}
and hence \eqref{estimate on difference of solutions} follows. We finish the proof of Theorem~\ref{Uniqueness theorem}.



\end{document}